      \def\@setcopyright{}
      \def\serieslogo@{}
\newcommand{\Complex}{\mathbb C}
\newcommand{\Real}{\mathbb R}
\newcommand{\N}{\mathbb N}
\newcommand{\ddbar}{\overline\partial}
\newcommand{\pr}{\partial}
\newcommand{\ol}{\overline}
\newcommand{\Td}{\widetilde}
\newcommand{\norm}[1]{\left\Vert#1\right\Vert}
\newcommand{\abs}[1]{\left\vert#1\right\vert}
\newcommand{\set}[1]{\left\{#1\right\}}
\newcommand{\To}{\rightarrow}
\newcommand{\R}{\mathbb{R}}
\newcommand{\C}{\mathbb{C}}
\newcommand{\map}[3]{#1\colon#2\rightarrow#3}
\theoremstyle{plain}
\newtheorem{theorem}{Theorem}[section]
\newtheorem{lemma}[theorem]{Lemma}
\newtheorem{corollary}[theorem]{Corollary}
\newtheorem{definition}[theorem]{Definition}
\newtheorem{example}[theorem]{Example}
\newtheorem{remark}[theorem]{Remark}
\numberwithin{equation}{section}
\begin{document}
\title[{Szeg\H{o} kernels and equivariant embedding theorems for CR manifolds}]
{Szeg\H{o} kernels and equivariant embedding theorems for CR manifolds}
\author[Hendrik Herrmann]{Hendrik Herrmann}
\address{Faculty of  Mathematics und Natural Sciences, University of Wuppertal, Gau{\ss}stra{\ss}e 20, 42119 Wuppertal, Germany}
\thanks{Hendrik Herrmann was partially supported by the CRC TRR 191: ``Symplectic Structures in Geometry, Algebra and Dynamics'' and the Mathematical Institute, University of Cologne. He would like to thank  the Mathematical Institute, Academia Sinica, and the School of Mathematics and Statistics, Wuhan University, for hospitality, a comfortable accommodation and financial support during his visits in January and March - April, respectively.}
\email{hherrmann@uni-wuppertal.de or post@hendrik-herrmann.de}
\author[Chin-Yu Hsiao]{Chin-Yu Hsiao}
\address{Institute of Mathematics, Academia Sinica and National Center for Theoretical Sciences, Astronomy-Mathematics Building, No. 1, Sec. 4, Roosevelt Road, Taipei 10617, Taiwan}
\thanks{Chin-Yu Hsiao was partially supported by Taiwan Ministry of Science and Technology project 106-2115-M-001-012 and Academia Sinica Career Development
Award. }
\email{chsiao@math.sinica.edu.tw or chinyu.hsiao@gmail.com}
\author[Xiaoshan Li]{Xiaoshan Li}
\address{School of Mathematics
and Statistics, Wuhan University, Wuhan 430072, Hubei, China}
\thanks{Xiaoshan Li was supported by  National Natural Science Foundation of China (Grant No. 11501422).}
\email{xiaoshanli@whu.edu.cn}
\dedicatory{In memory of Professor Louis Boutet de Monvel}

\begin{abstract}
We consider  a compact connected CR manifold with a transversal CR locally free $\Real$-action
endowed with a rigid positive CR line bundle.
We prove that a certain weighted Fourier-Szeg\H{o} kernel of the CR sections
in the high tensor powers admits a full asymptotic expansion and we establish $\Real$-equivariant Kodaira embedding theorem for CR manifolds.
Using similar methods we also establish an analytic proof of an $\Real$-equivariant Boutet de Monvel embedding theorem for strongly pseudoconvex CR manifolds. In particular, we obtain equivariant embedding theorems for irregular Sasakian manifolds.
As applications of our results, we obtain Torus equivariant Kodaira and Boutet de Monvel embedding theorems for CR manifolds and Torus equivariant Kodaira embedding theorem for complex manifolds.
\end{abstract}

\maketitle \tableofcontents

\section{Introduction and statement of the main results}\label{s-gue170805}

Let $(X,T^{1,0}X)$ be a torsion free CR manifold of dimension $2n-1$, $n\geq2$. Then there is a Reeb vector field $T\in C^\infty(X,TX)$ such that the flow of $T$ induces a transversal CR $\Real$-action on $X$. The study of $\Real$-equivariant CR embeddablity for $X$ is closely related to important problems in CR geometry, complex geometry and Mathematical physics. For example, for a compact irregular Sasakian manifold $X$, it is important to know if there is an embedding of $X$ preserving the Reeb vector field and this problem is related to $\Real$-equivariant CR embedding problems for torsion free strongly pseudoconvex CR manifolds. Furthermore, $\Real$-equivariant CR embedding problems are also congeneric to $G$-equivariant Boutet de Monvel and Kodaira embedding problems for CR and complex manifolds.

Suppose that all orbits of the flow of $T$ are compact. Then $X$ admits a transversal CR $S^1$-action $e^{i\theta}$. In~\cite{HHL16} and~\cite{HLM16}, we established $S^1$-equivariant Boutet de Monvel and Kodaira embedding theorems. Let us briefly review the method used in~\cite{HLM16}. Assume that $X$ admits a rigid CR line bundle $L$. For $k\in\mathbb N$, let $\mathcal{H}^0_b(X,L^k)$ denote the space of global smooth CR sections with values in $L^k$. The difficulty of Kodaira embedding problem for $X$ comes from the fact that it is very difficult to understand the large $k$ behavior of $\mathcal{H}^0_b(X,L^k)$ even if $L$ is positive. By using the $S^1$-action $e^{i\theta}$, we consider the spaces
\begin{equation}\label{e-gue170902}\begin{split}
&\mathcal{H}^0_{b,m}(X,L^k)=\set{u\in C^\infty(X,L^k);\, \ddbar_bu=0,\ \ Tu=imu},\\
&\mathcal{H}^0_{b,\leq k\delta}(X,L^k)=\oplus_{m\in\mathbb Z, \abs{m}\leq k\delta}\mathcal{H}^0_{b,m}(X,L^k),\ \ \delta>0.
\end{split}\end{equation}
We proved in~\cite{HLM16} that if $L$ is positive and $\delta>0$ small enough, then
\[\mbox{$d_k:={\rm dim\,}\mathcal{H}^0_{b,\leq k\delta}(X,L^k)\approx k^n$ if $k\gg1$}\]
and a weighted Fourier-Szeg\H{o} kernel for $\mathcal{H}^0_{b,\leq k\delta}(X,L^k)$ admits a full asymptotic expansion. Using that weighted Fourier-Szeg\H{o} kernel asymptotics, we showed in~\cite{HLM16} that the map
\[\begin{split}
\hat\Phi_{k,\delta}: X&\To\Complex\mathbb P^{d_k-1},\\
x&\To[f_1(x),\ldots,f_{d_k}(x)]
\end{split}\]
is an embedding if $k$ is large enough,
where $\set{f_1,\ldots,f_{d_k}}$ is an orthonormal basis for the space $\mathcal{H}^0_{b,\leq k\delta}(X,L^k)$ with respect to some $S^1$-invariant $L^2$ inner product such that
for each $j=1,2,\ldots, d_k$, we have $f_j\in \mathcal{H}^0_{b,m_j}(X,L^k)$, for some $m_j\in\mathbb Z$.
It is clear that the map $\hat\Phi_{k,\delta}$ is $S^1$-equivariant and hence $X$ can be $S^1$-equivariant CR embedded into the projective space.

The method mentioned above does not work when there is an orbit of the flow of $T$ which is non-compact. In that case, $X$ admits a transversal CR $\Real$-action $\eta$.  How to get
$\Real$-equivariant embedding theorems for CR manifolds (and irregular Sasakian manifolds) is an important and difficult problem in CR (and Sasaki Geometry). In this paper, we introduce a new idea and we successfully establish $\Real$-equivariant Kodaira embedding theorem for CR manifolds. For example, we obtain an equivariant Kodaira embedding theorem for irregular Sasakian manifolds. Let us briefly describe our idea. Let $T$
be the infinitesimal generator of the $\Real$-action and assume that $X$ admits a rigid CR line bundle $L$.
Consider the operator
\[-iT: C^\infty(X,L^k)\To C^\infty(X,L^k). \]
Assume that there is an $\Real$-invariant $L^2$ inner product $(\,\cdot\,|\,\cdot\,)_k$ on $C^\infty(X,L^k)$ (if $L$ is positive, we can always find an $\Real$-invariant $L^2$ inner product $(\,\cdot\,|\,\cdot\,)_k$ on $C^\infty(X,L^k)$)
and we extend $-iT$ to $L^2$ space by
\[\begin{split}
&-iT: {\rm Dom\,}(-iT)\subset L^2(X,L^k)\To L^2(X,L^k),\\
&{\rm Dom\,}(-iT)=\set{u\in L^2(X,L^k);\, -iTu\in L^2(X,L^k)}.
\end{split}\]
It is easy to see that $-iT$ is self-adjoint with respect to $(\,\cdot\,|\,\cdot\,)_k$. When $T$ comes from $S^1$-action, we can assume ${\rm Spec\,}(-iT)=\set{m;\, m\in\mathbb Z}$ and every element in ${\rm Spec\,}(-iT)$ is an eigenvalue of $-iT$,  where ${\rm Spec\,}(-iT)$ denotes the spectrum of $-iT$.
When $T$ comes from an $\Real$-action, it is very difficult to understand ${\rm Spec\,}(-iT)$. The key observation in this paper is the following: we show that if there exists a Riemannian metric \(g\) on \(X\), such that the \(\R\)-action acts by isometries with respect to this metric, then the $\Real$-action comes from a torus action on $X$. From this result, we prove that if $L$ is positive then the $\Real$-action comes from a torus action on $X$ and by using the torus action, it is not difficult to show that if $L$ is positive, then ${\rm Spec\,}(-iT)$ is countable and
 any element in ${\rm Spec\,}(-iT)$ is an eigenvalue of $-iT$.

 It was known before that the automorphism group of a compact Sasakian manifold is compact (see \cite{Sch95} and \cite{Coe12}) and therefore that the \(\Real\)-action induced by the Reeb flow comes from a torus action (see for example \cite{Coe11}). Using that result, an \(\Real\)-equivariant embedding result for compact Sasakian manifold (and hence for compact strongly pseudoconvex maniflolds with transversal CR vector fields) with vanishing first cohomology was proven in  \cite{Coe11}.
In this work we use elementary tools from Riemannian geometry to study the \(\Real\)-actions on CR manifolds. It turns out that the strongly pseudoconvexity condition can be replaced by the existence of a rigid positive CR line bundle. Furthermore, we can even drop the compactness condition and find out that there only exists two types of transversal CR \(\Real\)-actions
(see Theorem \ref{thm:mainthm}). This enables the study  of non-compact CR manifolds by analytic methods.

However, in this work we restrict ourselves to the compact case, that is we only need to consider \(\Real\)-actions which comes from a CR torus action.

 Now, assume that $L$ is positive, i.e.~there is an open interval $I\subset\Real$ such that $R^L_x-2s\mathcal{L}_x$ is positive, for every $x\in X$ and every $s\in I$, where $R^L$ denotes the curvature of $L$ and $\mathcal{L}$ denotes the Levi form of $X$ (see Definition~\ref{d-gue150808gI}). For simplicity,  we may assume that $]-\delta,\delta[\subset I$, where $\delta>0$. For every $\alpha\in {\rm Spec\,}(-iT)$, put
$$C^\infty_{\alpha}(X,L^k):=\set{u\in C^\infty(X,L^k);\, -iTu=\alpha u}.$$ As \eqref{e-gue170902},  we define
\[
\mathcal{H}^0_{b,\alpha}(X,L^k):=\set{u\in C^\infty_{\alpha}(X,L^k);\, \ddbar_bu=0}\]
and
\[\mathcal{H}^0_{b,\leq k\delta}(X,L^k):=\bigoplus_{\alpha\in{\rm Spec\,}(-iT), \abs{\alpha}\leq k\delta}\mathcal{H}^0_{b,\alpha}(X,L^k).\]
We can prove that $\mathcal{H}^0_{b,\leq k\delta}(X,L^k)$ is finite dimensional (see Lemma \ref{lem:Hdimfinite}) and hence that it is a closed subspace.
Then, we can modify the method used in~\cite{HLM16} and show that a weighted Fourier-Szeg\H{o} kernel for $\mathcal{H}^0_{b,\leq k\delta}(X,L^k)$ admits a full asymptotic expansion and by using the weighted Fourier-Szeg\H{o} kernel asymptotics, we show that the map
\[\begin{split}
\hat\Phi_{k,\delta}: X&\To\Complex\mathbb P^{d_k-1},\\
x&\To[f_1(x),\ldots,f_{d_k}(x)]
\end{split}\]
is an embedding if $k$ is large enough,
where $\set{f_1,\ldots,f_{d_k}}$ is an orthonormal basis for the space $\mathcal{H}^0_{b,\leq k\delta}(X,L^k)$ such that
for each $j=1,2,\ldots, d_k$, we have $f_j\in \mathcal{H}^0_{b,\alpha}(X,L^k)$, for some $\alpha\in{\rm Spce\,}(-iT)$. It is clear that the map $\hat\Phi_{k,\delta}$ is $\Real$-equivariant. As an application, we obtain equivariant Kodaira embedding theorem for irregular Sasakian manifolds. In particular,  we show that a compact transverse Fano irregular Sasakian manifold can be $\Real$-equivariant CR embedded into projective space. It should be mention that the idea of using CR sections to embed CR manifolds into projective space was introduced by Marinescu~\cite{Ma96} (see also~\cite{Ma16}).

When $X$ is strongly pseudoconvex, the $\Real$-action also comes from a torus action on $X$ and we established $\Real$-equivariant Boutet de Monvel embedding theorem
for $X$ by using our  $S^1$-equivariant Boutet de Monvel embedding theorem~\cite{HHL16}.


We now formulate our main results. We refer the reader to Section~\ref{s:prelim} for some standard notations and terminology used here.
Let $(X,T^{1,0}X)$ be a compact connected CR manifold
of dimension $2n-1$, $n\geqslant2$, endowed with a $\Real$-action $\eta$, $\eta\in\Real$: $\eta: X\to X$, $x\in X\To \eta\circ x\in X$.
Let $T$
be the infinitesimal generator of the $\Real$-action. We assume that the $\Real$-action $\eta$ is transversal CR, that is, $T$ preserves the CR structure $T^{1,0}X$,
and $T$ and $T^{1,0}X\oplus\overline{T^{1,0}X}$ generate the complex tangent bundle to $X$.

Let $(L,h^L)$ be a rigid CR line bundle over $X$, where $h^L$ is a rigid Hermitian metric on $L$.  Let $R^L$ be the curvature of $L$ induced by $h^L$. We say that $(L,h^L)$ is a rigid positive CR  line bundle over $X$ if there is an  open interval $I\subset\Real$ such that $R^L-2s\mathcal{L}$ is positive definite at every point of $X$, for every $s\in I$, where $\mathcal{L}$ denotes the Levi form on $X$. For simplicity, in this work, we always assume that
$]-\delta,\delta[\subset I$,
where $\delta>0$. Let $L^k$ be the $k$-th power of $L$. The Hermitian metric on $L^k$ induced by $h^L$ is denoted by $h^{L^k}$. Let
$\langle\,\cdot\,|\,\cdot\,\rangle$ be the rigid Hermitian metric on $\Complex TX$ induced by $R^L$ such that
$$T^{1,0}X\perp T^{0,1}X,\:\: T\perp (T^{1,0}X\oplus T^{0,1}X),\:\:
\langle\,T\,|\,T\,\rangle=1.$$ We denote by $dv_X$ the volume form induced by $\langle\,\cdot\,|\,\cdot\,\rangle$.
Let $(\,\cdot\,|\,\cdot\,)_k$ be the $L^2$ inner product on $C^\infty(X,L^k)$ induced by $h^{L^k}$ and
$dv_X$. Let $L^2(X,L^k)$ be the completion of $C^\infty(X,L^k)$ with respect
to $(\,\cdot\,|\,\cdot\,)_k$. We extend $(\,\cdot\,|\,\cdot\,)_k$ to $L^2(X,L^k)$.
Consider the operator
\[-iT: C^\infty(X,L^k)\To C^\infty(X,L^k)\]
and we extend $-iT$ to $L^2$ space by
\[\begin{split}
&-iT: {\rm Dom\,}(-iT)\subset L^2(X,L^k)\To L^2(X,L^k),\\
&{\rm Dom\,}(-iT)=\set{u\in L^2(X,L^k);\, -iTu\in L^2(X,L^k)}.
\end{split}\]
By using the fact that the $\Real$-action comes from a torus action on $X$ (see Theorem~\ref{thm:mainthm}), we will show that (see Theorem~\ref{t-gue170817yc} and Theorem~\ref{t-gue170828}) $-iT$ is self-adjoint with respect to $(\,\cdot\,|\,\cdot\,)_k$, ${\rm Spec\,}(-iT)$ is countable
and every element in ${\rm Spec\,}(-iT)$ is an eigenvalue of $-iT$, where ${\rm Spec\,}(-iT)$ denotes the spectrum of $-iT$.

Let $\ddbar_b:\Omega^{0,q}(X,L^k)\To\Omega^{0,q+1}(X,L^k)$
be the tangential Cauchy-Riemann operator with values in $L^k$.
For every $\alpha\in {\rm Spec\,}(-iT)$, put
\begin{equation}\label{e-gue150806I}
C^\infty_{\alpha}(X,L^k):=\set{u\in C^\infty(X,L^k);\, -iTu=\alpha u},
\end{equation}
and
\begin{equation}\label{e-gue150806II}
\mathcal{H}^0_{b,\alpha}(X,L^k):=\set{u\in C^\infty_{\alpha}(X,L^k);\, \ddbar_bu=0}.
\end{equation}
It is easy to see that
for every $\alpha\in {\rm Spec\,}(-iT)$, we have
\begin{equation}\label{e-gue150806III}
{\rm dim\,}\mathcal{H}^{0}_{b,\alpha}(X,L^k)<\infty.
\end{equation}
For $\lambda>0$, put
\begin{equation}\label{e-gue150806IV}
\mathcal{H}^0_{b,\leq\lambda}(X,L^k):=\bigoplus_{\alpha\in{\rm Spec\,}(-iT), \abs{\alpha}\leq\lambda}\mathcal{H}^0_{b,\alpha}(X,L^k).
\end{equation}

For every $\alpha\in {\rm Spec\,}(-iT)$, let $L^2_{\alpha}(X,L^k)\subset L^2(X,L^k)$ be the eigenspace of $-iT$ with eigenvalue $\alpha$. It is easy to see that
$L^2_{\alpha}(X,L^k)$ is the completion of $C^\infty_{\alpha}(X,L^k)$ with respect to $(\,\cdot\,|\,\cdot\,)_k$.
Let
\begin{equation}\label{e-gue150807}
Q^{(0)}_{\alpha,k}:L^2(X,L^k)\To L^2_\alpha(X,L^k)
\end{equation}
be the orthogonal projection with respect to $(\,\cdot\,|\,\cdot\,)_k$.
We have the Fourier decomposition
\[L^2(X,L^k)=\bigoplus_{\alpha\in{\rm Spec\,}(-iT)} L^2_\alpha(X,L^k).\]
We first construct a bounded operator on $L^2(X,L^k)$
by putting a weight on the components of the Fourier
decomposition with the help of a cut-off function.
Fix $\delta>0$ and a function
\begin{equation}\label{e-gue160105}
\tau_\delta\in C^\infty_0((-\delta,\delta)),\:\:
0\leq\tau_\delta\leq1, \:\:\text{$\tau_\delta=1$ on
$\left[-\frac{\delta}{2},\frac{\delta}{2}\:\right]$}.
\end{equation}
Let $F_{k,\delta}:L^2(X,L^k)\To L^2(X,L^k)$
be the bounded operator given by
\begin{equation}\label{e-gue150807I}
\begin{split}
F_{k,\delta}:L^2(X,L^k)&\To L^2(X,L^k),\\
u&\mapsto\sum_{\alpha\in{\rm Spec\,}(-iT)}\tau_\delta\left(\frac{\alpha}{k}\right)Q^{(0)}_{\alpha,k}(u).
\end{split}
\end{equation}
For every $\lambda>0$, we consider the partial Szeg\H{o} projector
\begin{equation}\label{e-gue150806V}
\Pi_{k,\leq\lambda}:L^2(X,L^k)\To \mathcal{H}^0_{b,\leq\lambda}(X,L^k)
\end{equation}
which is the orthogonal projection on the space of $\Real$-equivariant CR functions
of degree less than $\lambda$. Finally, we consider the weighted Fourier-Szeg\H{o}
operator
\begin{equation}\label{e-gue150807II}
P_{k,\delta}:=F_{k,\delta}\circ\Pi_{k,\leq k\delta}\circ F_{k,\delta}:L^2(X,L^k)
\To \mathcal{H}^0_{b,\leq k\delta}(X,L^k).
\end{equation}
The Schwartz kernel of $P_{k,\delta}$ with respect to $dv_X$ is the smooth section
$(x,y)\mapsto P_{k,\delta}(x,y)\in L^k_x\otimes(L^k_y)^*$ satisfying
\begin{equation}\label{sk}
(P_{k,\delta}u)(x)=\int_X P_{k,\delta}(x,y)u(y)\,dv_X(y)\,,\:\:u\in L^2(X,L^k).
\end{equation}

Let $f_j=f_j^{(k)}$, $j=1,\ldots,d_k,$ be an orthonormal basis of
$\mathcal{H}^0_{b,\leq k\delta}(X,L^k)$.
Then
\begin{equation}\label{sk1}
\begin{split}
P_{k,\delta}(x,y)&=\sum_{j=1}^{d_k} (F_{k,\delta}f_j)(x)\otimes
\big((F_{k,\delta}f_j)(y)\big)^*,\\
P_{k,\delta}(x,x)&=\sum_{j=1}^{d_k}\big| (F_{k,\delta}f_j)(x)\big|^2_{h^{L^k}}.
\end{split}
\end{equation}
It should be noticed that the full Szeg\H{o} kernel
$\sum_{j=1}^{d_k}|f_j(x)|^2_{h^{L^k}}$ doesn't admit
an asymptotic expansion in general, hence the necessity
of using the cut-off function $F_{k,\delta}$. In the discussion after Corollary 1.2 in~\cite{HLM16}, we gave an example to show that
the full Szeg\H{o} kernel doesn't admit an asymptotic expansion.
In order to describe the Fourier-Szeg\H{o} kernel $P_{k,\delta}(x,y)$ we will localize $P_{k,\delta}$
with respect to a local rigid CR trivializing section $s$ of $L$ on an open set $D\subset X$.
We define the weight of the metric $h^L$ on $L$ with respect to $s$ to be the function
$\Phi\in C^\infty(D)$ satisfying $\abs{s}^2_{h^L}=e^{-2\Phi}$.
We have an isometry
\begin{equation}\label{e-gue150806VI}
U_{k,s}:L^2(D)\to L^2(D,L^k),\:\: u\longmapsto ue^{k\Phi}s^k,
\end{equation}
 with
inverse $U_{k,s}^{-1}:L^2(D,L^k)\to L^2(D)$, $g\mapsto e^{-k\Phi}s^{-k}g$.
The localization of $P_{k,\delta}$ with respect to the trivializing rigid CR section $s$ is given by
\begin{equation}\label{e-gue150806VII}
P_{k,\delta,s}:L^2_{\mathrm{comp}}(D)\To L^2(D),\:\:
P_{k,\delta,s}= U_{k,s}^{-1}P_{k,\delta}U_{k,s},
\end{equation}
where $L^2_{\mathrm{comp}}(D)$ is the subspace of elements of $L^2(D)$
with compact support in $D$. Let $P_{k,\delta,s}(x,y)\in C^\infty(D\times D)$
be the Schwartz kernel of $P_{k,\delta,s}$ with respect to $dv_X$.
The first main result of this work describes the structure of the localized
Fourier-Szeg\H{o} kernel $P_{k,\delta,s}(x,y)$.
\begin{theorem}\label{t-gue150807}
Let $X$ be a compact CR manifold of dimension $2n-1$, $n\geq2$, with a transversal CR locally free $\Real$-action and
let $L$ be a positive rigid CR line bundle on $X$. With the notations and assumptions above,
consider a point $p\in X$ and a canonical coordinates neighborhood
$(D,x=(x_1,\ldots,x_{2n-1}))$ centered at $p=0$.
Let $s$ be a local rigid CR trivializing section of $L$ on $D$ and set $\abs{s}^2_{h}=e^{-2\Phi}$.
Fix $\delta>0$ small enough and $D_0\Subset D$. Then
\begin{equation}\label{sk2}
P_{k,\delta,s}(x,y)=\int_\R e^{ik\varphi(x,y,t)}g(x,y,t,k)dt+O(k^{-\infty})\:\:
\text{on $D_0\times D_0$},
\end{equation}
where $\varphi\in C^\infty( D\times D\times(-\delta,\delta))$ is a phase function such that
for some constant $c>0$ we have
\begin{equation}\label{e-gue150807b}
\begin{split}
&d_x\varphi(x,y,t)|_{x=y}=-2{\rm Im\,}\ddbar_b\Phi(x)+t\omega_0(x),\ \
d_y\varphi(x,y,t)|_{x=y}=2{\rm Im\,}\ddbar_b\Phi(x)-t\omega_0(x),\\
&{\rm Im\,}\varphi(x,y,t)\geq c|z-w|^2,\ \ (x,y,t)\in D\times D\times(-\delta,\delta), x=(z, x_{2n-1}), y=(w, y_{2n-1}),\\
&\mbox{${\rm Im\,}\varphi(x,y,t)+\abs{\frac{\pr\varphi}{\pr t}(x,y,t)}^2\geq c\abs{x-y}^2$,
$(x,y,t)\in D\times D\times(-\delta,\delta)$},\\
&\mbox{$\varphi(x,y,t)=0$ and $\frac{\pr\varphi}{\pr t}(x,y,t)=0$ if and only if $x=y$},
\end{split}
\end{equation}
and $g(x,y,t,k)\in S^{n}_{{\rm loc\,}}(1;D\times D\times(-\delta,\delta))
\cap C^\infty_0(D\times D\times(-\delta,\delta))$ is a symbol with expansion
\begin{equation}\label{e-gue150807bI}
\begin{split}
&g(x,y,t,k)\sim\sum^\infty_{j=0}g_j(x,y,t)k^{n-j}\text{ in }S^{n}_{{\rm loc\,}}
(1;D\times D\times(-\delta,\delta)), \\
\end{split}\end{equation}
and for
$x\in D_0$ and $|t|<\delta$ we have
\begin{equation}\label{e-gue150807a}
g_0(x,x,t)=(2\pi)^{-n}\abs{\det\bigr(R^L_x-2t\mathcal{L}_x\bigr)}\abs{\tau_\delta(t)}^2,
\end{equation}
where $\omega_0\in C^\infty(X,T^*X)$ is the global real $1$-form of unit length orthogonal
to $T^{*1,0}X\oplus T^{*0,1}X$, see \eqref{e-gue150808I}, $\abs{\det{\bigr(R^L_x-2t\mathcal{L}_x\bigr)}}=\abs{\lambda_1(x,t)\cdots\lambda_{n-1}(x,t)}$,
where $\lambda_j(x,t)$, $j=1,\ldots,n-1$, are the eigenvalues of the Hermitian quadratic
form $R^L_x-2t\mathcal{L}_x$ with respect to $\langle\,\cdot\,|\,\cdot\,\rangle$,
$R^L_x$ and $\mathcal{L}_x$ denote the curvature two form of $L$ and the Levi form
of $X$ respectively (see Definition~\ref{d-gue150808g} and Definition~\ref{d-gue150808}).
\end{theorem}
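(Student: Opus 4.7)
The plan is to produce an explicit oscillatory-integral ansatz for $P_{k,\delta,s}$ and then to show that it agrees with the true kernel modulo $O(k^{-\infty})$, following the scheme developed in~\cite{HLM16} for the $S^1$-equivariant case and adapting it via the torus factorization of the $\R$-action. First I would invoke Theorem~\ref{thm:mainthm}, which says that the transversal CR $\R$-action factors through a CR torus action, so that $-iT$ is self-adjoint with pure point spectrum and the full Szeg\H{o} projector $\Pi_k$ commutes with every bounded Borel function of $-iT$. Since $\operatorname{supp}\tau_\delta\subset(-\delta,\delta)$, the partial Szeg\H{o} cutoff $\Pi_{k,\leq k\delta}$ becomes redundant after composing with $F_{k,\delta}$, and
\[P_{k,\delta}=F_{k,\delta}^2\circ\Pi_k=\sigma_\delta(-iT/k)\circ\Pi_k,\qquad \sigma_\delta:=\tau_\delta^2,\]
an identity that separates the spectral cutoff from the Szeg\H{o} projector.

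Working in canonical BRT coordinates $(D,x=(x',x_{2n-1}))$ around $p$ with $T=\partial/\partial x_{2n-1}$ and the rigid CR trivializing section $s$, I would then construct a microlocal parametrix for $\Pi_{k,s}$: a complex phase $\varphi(x,y,t)$ on $D\times D\times(-\delta,\delta)$ and a symbol $a\sim\sum_j a_j k^{n-j}$ such that
\[\widetilde A_{k,s}(x,y):=\int_\R e^{ik\varphi(x,y,t)}a(x,y,t,k)\,dt\]
approximates the Schwartz kernel of $U_{k,s}^{-1}\Pi_k U_{k,s}$ modulo $O(k^{-\infty})$ on $D_0\times D_0$. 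The phase $\varphi$ is obtained by solving, to infinite order at the diagonal, the eikonal equation coming from $\ddbar_b(e^{ik\varphi}a)\equiv 0$ up to $O(k^{-\infty})$; the conditions in~\eqref{e-gue150807b} then follow together with the identification of the variable $t$ as the Hamiltonian dual to the group direction $T$. Positivity of $R^L$, and more precisely of $R^L-2t\mathcal L$ for $|t|<\delta$, is what allows the eikonal equation to be solved with $\operatorname{Im}\varphi\geq 0$ and is the essential ingredient for the transport-equation solvability; the recursive transport equations then yield the $a_j$ with leading coefficient $a_0(x,x,t)=(2\pi)^{-n}|\det(R^L_x-2t\mathcal L_x)|$.

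Because at the diagonal $d_x\varphi|_{x=y}=-2\operatorname{Im}\ddbar_b\Phi(x)+t\omega_0(x)$ with $\omega_0(T)=1$ and both $\partial_b\Phi(T)=0$ and $\ddbar_b\Phi(T)=0$ (by rigidity), the operator $-iT/k$ acts on $\widetilde A_{k,s}$ to all orders by multiplication by $t$ in the integration variable. Hence $\sigma_\delta(-iT/k)\widetilde A_{k,s}=\int_\R e^{ik\varphi}\sigma_\delta(t)a\,dt+O(k^{-\infty})$, and setting $g(x,y,t,k):=\sigma_\delta(t)a(x,y,t,k)$ produces a symbol with the claimed leading term and $t$-support. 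The remaining task is to show that this equals $P_{k,\delta,s}$ modulo $O(k^{-\infty})$: as in~\cite{HLM16}, this follows from three checks, namely that the ansatz is annihilated by $\ddbar_b$ up to $O(k^{-\infty})$, that it has spectral support in $\{|-iT/k|\leq\delta\}$ by construction of $\sigma_\delta$, and that it is approximately self-adjoint and approximately idempotent on $\mathcal H^0_{b,\leq k\delta}(X,L^k)$.

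The main obstacle is the parametrix construction in the absence of strong pseudoconvexity of $X$: positivity of $R^L$ does not imply definiteness of $\mathcal L$, so the eikonal equation can only be solved with nonnegative imaginary part on the positive microlocal sector selected by $|t|<\delta$. Showing that the complementary microlocal region contributes only $O(k^{-\infty})$ to the Fourier-Szeg\H{o} kernel, by exploiting the $\R$-equivariance (torus factorization) and the compact support of $\sigma_\delta$, together with the use of complex stationary phase and Melin-Sj\"ostrand theory to evaluate the oscillatory integrals, is the analytically delicate step.
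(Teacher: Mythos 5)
Your outline captures the right global picture (localization in BRT coordinates, the phase and leading symbol of~\cite{HLM16}, positivity of $R^L_x-2t\mathcal{L}_x$ for $\abs{t}<\delta$), but it diverges from the paper structurally and, more importantly, leaves the genuinely new steps unproved. The paper does not rebuild the parametrix: it quotes Theorem~\ref{t-gue150811} (Theorem 1.13 of~\cite{Hsiao14}), which already contains the Melin--Sj\"ostrand machinery, and reduces Theorem~\ref{t-gue150807} to verifying three hypotheses: (I) the $O(k^{-n_0})$ small spectral gap of $\Box_{b,k}$ with respect to $F_{k,\delta}$ --- this, rather than transport-equation solvability, is where positivity of $L$ does its real work, since it is what forces an approximate kernel to coincide with the true weighted Fourier--Szeg\H{o} kernel up to $O(k^{-\infty})$; your substitute, ``approximately self-adjoint and approximately idempotent'', is not sufficient on a CR manifold that is not strongly pseudoconvex, where ${\rm Ker\,}\Box_{b,k}$ in $L^2$ is not controlled without such a gap --- (II) $k$-negligibility of $\Pi_{k,\leq k\delta}$ away from the diagonal (Lemma~\ref{l-gue150813}), and (III) the identification of the localized cut-off $F_{k,\delta,s}$ with a one-dimensional semiclassical Fourier multiplier $B_k$ with symbol $\tau_\delta(\xi_{2n-1})$ modulo $O(k^{-\infty})$ (Lemma~\ref{l-gue131209}).

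The concrete gap is your assertion that $\sigma_\delta(-iT/k)$ acts on the oscillatory integral ``to all orders by multiplication by $t$ in the integration variable''. The operator $F_{k,\delta}$ is defined spectrally through the $T^d$-Fourier decomposition, i.e.\ as an infinite sum over $m\in\mathbb Z^d$ of terms $\tau_\delta\bigl(\tfrac{1}{k}\sum_j m_j\beta_j\bigr)Q^{(0)}_{m_1,\ldots,m_d,k}$ involving averaging over the whole torus; it is not a local operator in $x_{2n-1}$, and converting it into multiplication by $\tau_\delta(t)$ under the integral is precisely the content of Lemma~\ref{l-gue131209}. That proof needs the decomposition $T=\beta_1T_1+\cdots+\beta_dT_d$ with $\beta_1,\ldots,\beta_d$ linearly independent over $\mathbb Q$, repeated integration by parts in $y_{2n-1}$ and $\xi_{2n-1}$, and control of the sums $\sum_{m\in\mathbb Z^d}\abs{m_1\beta_1+\cdots+m_d\beta_d}^{-2N}$; the companion Lemma~\ref{l-gue150813} uses the same torus-average representation together with the sup-norm bound $\abs{u}^2_{h^{L^k}}\leq Ck^{n}$ to discard the off-diagonal contribution. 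Without these two lemmas your reduction of $P_{k,\delta}$ to ``$\sigma_\delta(t)$ times the Szeg\H{o} parametrix'' is an assertion, not a proof. Finally, your simplification $P_{k,\delta}=\tau_\delta^2(-iT/k)\circ\Pi_k$ is plausible but itself requires the ellipticity of $\Box_{b,k}-T^2$ (as in Lemma~\ref{lem:Hdimfinite}) to identify ${\rm Ker\,}\Box_{b,k}$ intersected with the spectral band $\abs{\alpha}\leq k\delta$ with $\mathcal{H}^0_{b,\leq k\delta}(X,L^k)$; the paper sidesteps this by keeping $\Pi_{k,\leq k\delta}$ throughout.
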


We refer the reader to Section 2.2 in~\cite{HLM16} for the notations in semi-classical analysis used
in Theorem~\ref{t-gue150807}. For more properties of the phase function $\varphi(x,y,t)$, see Section 3.3 in~\cite{HLM16}. For the meaning of canonical coordinates, we refer the reader to the discussion after Definition~\ref{d-gue150808gI}.

We define now the Equivariant Kodaira map. Consider an open set $D\subset X$ with
\begin{equation}\label{e:eqc}
\bigcup_{\eta\in\Real}\eta(D)\subset D\,,
\end{equation}
and let $s:D\to L$ be a local rigid CR trivializing section on $D$, where
\[\eta(D):=\set{\eta\circ x;\, x\in D}.\]
For any $u\in C^\infty(X,L^k)$
we write $u(x)=s^k(x)\otimes\widetilde u(x)$ on $D$,
with $\widetilde u\in C^\infty(D)$.
Let $\{f_j\}_{j=1}^{d_k}$ be an orthonormal basis of
$\mathcal H^0_{b, \leq k\delta}(X, L^k)$
with respect to $(\,\cdot\,|\,\cdot\,)_k$ such that
$f_j\in\mathcal H^0_{b, \alpha_j}(X, L^k)$, for some $\alpha_j\in{\rm Spec\,}(-iT)$.
Set $g_j=F_{k,\delta}f_j$, $j=1,\ldots,d_k$. The Equivariant Kodaira map is defined on $D$ by
\begin{equation}\label{e-gue150807h0}
\begin{split}
\Phi_{k,\delta}:D&\longrightarrow\mathbb C\mathbb P^{d_k-1},\\
x&\longmapsto\big[F_{k,\delta}f_1(x),\ldots,F_{k,\delta}f_{d_k}(x)\big]:=
\big[\widetilde g_1(x), \ldots, \widetilde g_{d_k}(x)\big],\;\;\text{for $x\in D$}.
\end{split}
\end{equation}
We will show in Theorem~\ref{t-gue170811} and Corollary~\ref{c-gue170814} that there exists
an open cover of $X$ with sets $D$ satisfying \eqref{e:eqc}.
Thus we have a well-defined global map
\begin{equation}\label{e-gue150807h0a}
\Phi_{k,\delta}:X\longrightarrow\mathbb C\mathbb P^{d_k-1},\quad
x\longmapsto\big[F_{k,\delta}f_1(x),\ldots,F_{k,\delta}f_{d_k}(x)\big]=:\big[\Phi^1_{k,\delta}(x),\ldots,\Phi^{d_k}_{k,\delta}(x)\big].
\end{equation}
Since $g_j\in{\mathcal H}^0_{b, \alpha_j}(X, L^k)$ we have
$-iT\,\widetilde g_j=\alpha_j\widetilde g_j$ hence
\[g_j(\eta\circ x)=s^k(\eta\circ x)\otimes\widetilde g_j(\eta\circ x)=
s^k(\eta\circ x)\otimes e^{i\alpha_j\eta}\widetilde g_j(x).\]
Thus
\begin{equation}
\begin{split}
\Phi_{k, \delta}(\eta\circ x)&=[\widetilde g_1(\eta\circ x), \cdots, \widetilde g_{d_k}(\eta\circ x)]=
[e^{\alpha_1\eta}\widetilde g_1(x), \cdots, e^{i\alpha_{d_k}\eta}\widetilde g_{d_k}(x)]\\
&=\big[e^{i\alpha_1\eta}\Phi^1_{k,\delta}(x),\ldots,
e^{i\alpha_k\eta}\Phi^{d_k}_{k,\delta}(x)\big].
\end{split}
\end{equation}
We are thus led to consider \emph{weighted diagonal} $\Real$-actions on
$\mathbb C\mathbb P^N$, that is, actions for which there exists
$(\alpha_1,\ldots,\alpha_{N},\alpha_{N+1})\in\Real^{N+1}$
such that for all $\eta\in\Real$,
\begin{equation}\label{e:equi}
\eta\circ [z_1,\ldots,z_{N+1}]=
\big[e^{i\alpha_1\eta}z_1,\ldots,e^{i\alpha_{N+1}\eta}z_{N+1}\big],\:\:
[z_1,\ldots,z_{N+1}]\in\mathbb C\mathbb P^N.
\end{equation}

\begin{theorem}\label{t-gue150807I}
Let $(X, T^{1,0}X)$ be a compact CR manifold of dimension $2n-1$, $n\geq2$, with a transversal CR $\Real$-action $\eta$.
Assume there is a rigid positive CR line bundle $L$ over $X$.
Then there exists $\delta_0>0$ such that for all $\delta\in(0,\delta_0)$ there exists
$k(\delta)$ so that for $k>k(\delta)$ and any orthonormal basis
$\{f_j\}_{j=1}^{d_k}$ of
$\mathcal H^0_{b, \leq k\delta}(X, L^k)$
with respect to $(\,\cdot\,|\,\cdot\,)_k$ such that
$f_j\in\mathcal H^0_{b, \alpha_j}(X, L^k)$, for some $\alpha_j\in{\rm Spec\,}(-iT)$,
the map $\Phi_{k,\delta}$ introduced in \eqref{e-gue150807h0}
is a smooth CR embedding which is $\Real$-equivariant with respect to
the weighted diagonal $\Real$-action on $\Complex\mathbb P^{d_k-1}$ defined by $(\alpha_1,\ldots,\alpha_{d_k})\in\Real^{d_k}$
as in \eqref{e:equi}, that is,
\[\Phi_{k,\delta}(\eta\circ x)=
\eta\circ\Phi_{k,\delta}(x),\:\:x\in X,\; \eta\in\Real.\]
In particular, the image $\Phi_{k,\delta}(X)\subset\mathbb C\mathbb P^{d_k-1}$ is a CR submanifold
with an induced
weighted diagonal locally free  $\Real$-action.
\end{theorem}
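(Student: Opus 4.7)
The plan is to imitate, in the $\R$-equivariant setting, the asymptotic Szeg\H{o}-kernel strategy of~\cite{HLM16}, using Theorem~\ref{t-gue150807} as the principal analytic input. The $\R$-equivariance of $\Phi_{k,\delta}$ is immediate from the computation preceding the theorem: each $f_j$ lies in $\mathcal H^0_{b,\alpha_j}(X,L^k)$, and since $F_{k,\delta}$ is a bounded function of $-iT$, it scales $f_j$ only by the scalar $\tau_\delta(\alpha_j/k)$, so $g_j=F_{k,\delta}f_j$ still transforms by $e^{i\alpha_j\eta}$ under the $\R$-action. Thus $\Phi_{k,\delta}$ intertwines the $\R$-action on $X$ with the weighted diagonal action~\eqref{e:equi}, and the core task reduces to showing that for $\delta$ sufficiently small and $k$ sufficiently large, $\Phi_{k,\delta}$ is a smooth CR embedding.

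I would first dispatch smoothness, the CR property, and well-definedness. Each $f_j\in\mathcal H^0_{b,\alpha_j}(X,L^k)$ is smooth by elliptic regularity applied to the system $\ddbar_bf_j=0$, $-iTf_j=\alpha_jf_j$, whose symbols together generate a system elliptic in the sense of H\"ormander. Since $T$ preserves the CR structure and $(L,h^L)$ is rigid, $\ddbar_b$ commutes with $T$, so $g_j=\tau_\delta(\alpha_j/k)f_j$ is a smooth CR section. Writing $P_{k,\delta}(x,x)=\sum_{j=1}^{d_k}|g_j(x)|^2_{h^{L^k}}$ and applying Theorem~\ref{t-gue150807} together with~\eqref{e-gue150807a}, one obtains
\[
P_{k,\delta,s}(x,x)=(2\pi)^{-n}k^n\int_{-\delta}^{\delta}\bigl|\det(R^L_x-2t\mathcal L_x)\bigr|\,\tau_\delta(t)^2\,dt+O(k^{n-1}),
\]
uniformly in $x$. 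For $\delta$ small enough, $R^L_x-2t\mathcal L_x$ stays positive definite, so the leading term is strictly positive and bounded below uniformly in $x$; hence not all $g_j(x)$ vanish simultaneously, and $\Phi_{k,\delta}\colon X\to\C\mathbb P^{d_k-1}$ is a well-defined smooth CR map.

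I would then establish injectivity and the immersion property simultaneously through off-diagonal kernel asymptotics. If $\Phi_{k,\delta}(x_0)=\Phi_{k,\delta}(y_0)$, the vectors $(\widetilde g_j(x_0))_j$ and $(\widetilde g_j(y_0))_j$ are proportional in the local trivialization, so Cauchy--Schwarz becomes an equality and
\[
\frac{|P_{k,\delta,s}(x_0,y_0)|^2}{P_{k,\delta,s}(x_0,x_0)\,P_{k,\delta,s}(y_0,y_0)}=1,
\]
up to an explicit weight factor in $\Phi$. On the other hand, the phase inequality ${\rm Im}\,\varphi+|\partial_t\varphi|^2\geq c|x-y|^2$ in~\eqref{e-gue150807b}, inserted into the oscillatory representation~\eqref{sk2} and integrated by stationary phase, forces this ratio to decay once $|x_0-y_0|\geq k^{-1/2+\varepsilon}$; a compactness argument on $\{(x,y)\in X\times X:d(x,y)\geq\varepsilon_0\}$ upgrades this to a uniform global bound, ruling out long-range coincidences. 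For close pairs $|x_0-y_0|<k^{-1/2+\varepsilon}$, a Taylor expansion of $\varphi$ on the diagonal yields, in rescaled coordinates, $1-R_k(x_0,y_0)\geq ck|x_0-y_0|^2$, which both prohibits short-range injectivity failure and shows that $d\Phi_{k,\delta}$ is injective at each point, so $\Phi_{k,\delta}$ is an immersion.

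The main technical obstacle is capturing the Reeb direction $T$. Curvature positivity alone controls the $2n-2$ CR tangent directions but says nothing about $T$; injectivity of $d\Phi_{k,\delta}$ along $T$ and the separation of points along Reeb orbits must be extracted from the auxiliary $t$-integration in~\eqref{sk2}. This is precisely why the phase function in Theorem~\ref{t-gue150807} is tailored so that $\partial_t\varphi$ vanishes only on the diagonal and its square $|\partial_t\varphi|^2$ provides the missing quadratic control in the Reeb variable in the second inequality of~\eqref{e-gue150807b}. Once this Reeb-direction analysis is carried out along the lines of~\cite[Section~5]{HLM16}, injectivity and the immersion property combine to give a smooth CR embedding, and the image $\Phi_{k,\delta}(X)$ inherits the weighted diagonal $\R$-action directly from the equivariance noted above.
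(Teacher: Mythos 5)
Your proposal is correct and follows essentially the same route as the paper: the paper's own proof of Theorem~\ref{t-gue150807I} consists of invoking Theorem~\ref{t-gue150807} and repeating the argument of Theorem~1.3 in~\cite{HLM16}, which is exactly the diagonal-asymptotics/off-diagonal-decay/Reeb-direction scheme you outline, with the equivariance read off from $F_{k,\delta}f_j=\tau_\delta(\alpha_j/k)f_j$ as you observe. No substantive divergence from the paper's (deferred) argument.
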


\begin{remark}\label{r-gue170818}
Let $\{f_j\}_{j=1}^{d_k}$ be an orthonormal basis of
$\mathcal H^0_{b, \leq k\delta}(X, L^k)$
with respect to $(\,\cdot\,|\,\cdot\,)_k$ such that
$f_j\in\mathcal H^0_{b, \alpha_j}(X, L^k)$, for some $\alpha_j\in{\rm Spec\,}(-iT)$. As above, we define
\begin{equation}\label{e-gue170818yh}
\begin{split}
\hat\Phi_{k,\delta}:X&\longrightarrow\mathbb C\mathbb P^{d_k-1},\\
x&\longmapsto\big[f_1(x),\ldots,f_{d_k}(x)\big].
\end{split}
\end{equation}
From Theorem~\ref{t-gue150807I}, it is easy to see that $\hat\Phi_{k,\delta}$
is a smooth CR embedding which is $\Real$-equivariant with respect to
the weighted diagonal $\Real$-action on $\Complex\mathbb P^{d_k-1}$ defined by $(\alpha_1,\ldots,\alpha_{d_k})\in\Real^{d_k}$
as in \eqref{e:equi}.
\end{remark}

Ohsawa and Sibony~\cite{Oh12,OS00}
constructed for every $\kappa\in\mathbb N$ a CR projective embedding of class $C^\kappa$
of a Levi-flat CR manifold by using $\ol\partial$-estimates.
The second author and Marinescu~\cite{HM15} gave a Szeg\H{o} kernel proof of Ohsawa and Sibony's result.
A natural question is whether we can improve the regularity to $\kappa=\infty$.
Adachi~\cite{Ad13} showed that the answer is no, in general.
The analytic difficulty of this problem comes from the fact that
the Kohn Laplacian is not hypoelliptic on Levi flat manifolds.
The second, third author and Marinescu \cite{HLM16} generalized Ohsawa and Sibony's result to $C^\infty$-smooth when the CR manifold admits a transversal CR circle action and the CR line bundle is rigid and positive. Theorem \ref{t-gue150807I} above shows that the circle action in \cite {HLM16} can be weakened to  a $\mathbb R$-action.

\begin{corollary}\label{c-gue170818a}
Let $(X, T^{1,0}X)$ be a compact irregular Sasakian manifold and let $T$ be the associated Reeb vector field. Let $\eta\in\Real$ be the $\Real$-action induced by the Reeb vector field. If $X$ admits a rigid positive CR line bundle $L$, then, for \(k\) sufficiently large, the maps $\Phi_{k,\delta}$ and $\hat\Phi_{k,\delta}$ are smooth CR embeddings  of $X$ in $\mathbb C\mathbb P^{d_k-1}$
which are $\Real$-equivariant with respect to a weighted diagonal $\Real$-actions on $\Complex\mathbb P^{d_k-1}$ (cf.\ Theorem \ref{t-gue150807I} and Remark~\ref{r-gue170818}).
\end{corollary}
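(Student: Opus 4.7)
This corollary is a direct application of Theorem~\ref{t-gue150807I} and Remark~\ref{r-gue170818}, so the plan is simply to verify that a compact irregular Sasakian manifold supplies the hypotheses these two results require.

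First, I would unpack the Sasakian structure. A compact Sasakian manifold $(X,T^{1,0}X)$ is in particular a compact connected CR manifold of dimension $2n-1$ with $n\geq 2$, carrying a contact one-form whose Reeb vector field $T$ is, by definition of the Sasakian condition, an infinitesimal CR transformation; that is, $T$ preserves $T^{1,0}X$. Moreover $T$ is transversal to $T^{1,0}X\oplus T^{0,1}X$ because it pairs nontrivially with the contact one-form. Hence the one-parameter family $\eta\colon X\to X$ generated by $T$ is a transversal CR $\Real$-action in the sense used throughout this paper. The ``irregular'' assumption means that not every orbit of $T$ is periodic, so this action is a genuine $\Real$-action and does not descend to an $S^1$-action; this is precisely the setting in which the new method of the present paper is needed rather than the $S^1$-equivariant results of \cite{HLM16}. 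The remaining hypothesis, the existence of a rigid positive CR line bundle $L$ on $X$, is postulated in the statement of the corollary.

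Second, I would invoke Theorem~\ref{t-gue150807I} to conclude that for $\delta>0$ sufficiently small and $k$ sufficiently large, the map $\Phi_{k,\delta}$ of \eqref{e-gue150807h0a} is a smooth CR embedding of $X$ into $\Complex\mathbb P^{d_k-1}$, and that it is $\Real$-equivariant with respect to the weighted diagonal $\Real$-action on $\Complex\mathbb P^{d_k-1}$ determined by the eigenvalues $\alpha_1,\ldots,\alpha_{d_k}$ of $-iT$ associated to the chosen orthonormal basis of $\mathcal H^0_{b,\leq k\delta}(X,L^k)$. The analogous statement for $\hat\Phi_{k,\delta}$ follows verbatim from Remark~\ref{r-gue170818}.

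The main ``obstacle'' is thus not really an obstacle at all: the substantive analytic content, namely the torus-action reduction behind Theorem~\ref{thm:mainthm}, the spectral properties of $-iT$, and the Fourier--Szeg\H o kernel asymptotics of Theorem~\ref{t-gue150807}, has already been packaged into Theorem~\ref{t-gue150807I} and Remark~\ref{r-gue170818}. Once the Sasakian dictionary identifying the Reeb flow with a transversal CR $\Real$-action is in place, the corollary is immediate. The only point at which one should exercise a little care is the compatibility between the Hermitian metric $\langle\,\cdot\,|\,\cdot\,\rangle$ induced by $R^L$ used in Theorem~\ref{t-gue150807I} and the standard Sasakian metric, but these play no role in the statement of the corollary since the theorem builds its $L^2$ inner product intrinsically from $R^L$ and $T$.
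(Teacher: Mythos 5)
Your proposal is correct and matches the paper's (implicit) argument: the paper states this corollary without a separate proof, treating it exactly as you do — as an immediate consequence of Theorem~\ref{t-gue150807I} and Remark~\ref{r-gue170818} once one observes that the Reeb flow of a compact Sasakian manifold is a transversal CR $\Real$-action. Your verification of the hypotheses, including the remark that irregularity only means the action is genuinely an $\Real$-action rather than an $S^1$-action, is exactly the intended reading.
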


\begin{corollary}\label{c-gue170818aI}
Let $(X, T^{1,0}X)$ be a compact transverse Fano irregular Sasakian manifold. Let $T$ be the associated Reeb vector field  on $X$ and let $\eta\in\Real$ be the $\Real$-action induced by the Reeb vector field. Then , for \(k\) sufficiently large, the maps $\Phi_{k,\delta}$ and $\hat\Phi_{k,\delta}$ are smooth CR embeddings of $X$ in $\mathbb C\mathbb P^{d_k-1}$
which are $\Real$-equivariant with respect to a weighted diagonal $\Real$-actions on $\Complex\mathbb P^{d_k-1}$ (cf.\ Theorem \ref{t-gue150807I} and Remark~\ref{r-gue170818}).
\end{corollary}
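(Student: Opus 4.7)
The plan is to deduce this corollary from Corollary~\ref{c-gue170818a} by producing a rigid positive CR line bundle on $X$ from the transverse Fano hypothesis. The natural candidate is the transverse anticanonical bundle $L := K_{\mathcal{F}_T}^{-1}$ of the Reeb foliation $\mathcal{F}_T$. A transverse holomorphic line bundle on the Sasakian manifold $X$ is given by transition data which is holomorphic in the transverse directions and locally constant along the Reeb flow; viewed on $X$, such transition data yields CR local frames that are invariant under the flow of $T$, which is exactly the definition of a rigid CR line bundle used in the paper. Hence $L$ is canonically a rigid CR line bundle on $X$.

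The transverse Fano hypothesis means that the basic first Chern class $c_1^B(\mathcal{F}_T) = c_1(L)$ is represented by a positive basic $(1,1)$-form. I would then choose a basic Hermitian metric $h^L$ on $L$ whose Chern curvature equals this positive representative; such a metric exists by the transverse $\partial\ddbar$-lemma for Sasakian manifolds, or equivalently by El~Kacimi-Alaoui's transverse Calabi-Yau theorem applied to the positive basic K\"ahler class. Since $h^L$ is basic, it is $T$-invariant and therefore rigid in the sense of the paper, and since its curvature form is transversally positive, it is positive definite on $T^{1,0}X$ at every point of $X$.

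With $(L, h^L)$ constructed, all hypotheses of Corollary~\ref{c-gue170818a} are satisfied, and that corollary immediately yields the desired $\Real$-equivariant CR embeddings $\Phi_{k,\delta}$ and $\hat\Phi_{k,\delta}$ for $k$ sufficiently large. The main point requiring attention is the dictionary between transverse holomorphic/Hermitian geometry of the Reeb foliation and rigid CR data on $X$: because $X$ is irregular, the leaf space is non-Hausdorff and one cannot simply work on a quotient, so the construction of $L$ and of the positive basic representative of $c_1(L)$ must be carried out directly in terms of $T$-invariant objects on $X$. This is classical foliated K\"ahler geometry, but it is the one step that demands care; once it is in place, the corollary follows without further analytic work.
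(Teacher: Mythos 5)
Your proposal is correct and follows exactly the route the paper intends: the paper states this corollary with only a ``cf.'' reference to Theorem~\ref{t-gue150807I} and Remark~\ref{r-gue170818}, leaving implicit precisely the step you spell out, namely that the transverse Fano condition furnishes a rigid positive CR line bundle (the transverse anticanonical bundle with a basic, hence rigid, Hermitian metric of positive transverse curvature) so that Corollary~\ref{c-gue170818a} applies. Your identification of the dictionary between basic/transverse data and the paper's rigid CR notions is the right point to emphasize, and no further argument is needed.
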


\begin{remark}
	If the rigid CR line bundle over the Sasakian manifold $(X, T^{1,0}X)$ in Corollary \ref{c-gue170818a} or Corollary \ref{c-gue170818aI} is not positive the maps $\Phi_{k,\delta}$ and $\hat\Phi_{k,\delta}$ may fail to be embeddings.
	However, in that case we can still find a smooth $\Real$-equivariant embedding of \(X\) into some \(\C\mathbb P^N\)  with respect to a weighted diagonal $\Real$-actions on $\Complex\mathbb P^{N}$. That follows from Corollary \ref{c-gue170817} using the map \(\C^N\ni(z_1,\ldots,z_N)\mapsto[1,z_1,\ldots,z_N]\in \Complex\mathbb P^{N}\).
\end{remark}

Now, we consider a torsion free compact connected strongly pseudoconvex CR manifold $(X,T^{1,0}X)$ of dimension $2n-1$, $n\geq2$. Then, $X$ admits a transversal CR $\Real$-action $\eta$, $\eta\in\Real$: $\eta: X\to X$, $x\mapsto \eta\circ x$.
By using the fact that the $\Real$-action comes from a torus action on $X$ (see Corollary~\ref{cor:psc}) and the equivariant embedding theorem established in~\cite{HHL16}, we get (see the proof of Theorem~\ref{thm:equivariantembeddingpseudoconvex})

\begin{theorem}\label{t-gue170817}
		Let  $X$ be a connected compact strongly pseudoconvex CR manifold equipped with a transversal CR $\Real$-action $\eta$.
		Then, there exists $N\in\mathbb N$, $\nu_1,\ldots,\nu_N\in\Real$ and a CR embedding $\map{\Phi=(\Phi_1,\ldots,\Phi_N)}{X}{\Complex^N}$
		such that \[\Phi(\eta\circ x)=(e^{i\nu_1\eta}\Phi_1(x),\ldots,e^{i\nu_N\eta}\Phi_N(x))\] holds for all $x\in X$ and $\eta\in\Real$. In other words, $\Phi$ is eqivariant with respect to the holomorphic $\Real$-action $\eta\circ z=(e^{i\nu_1\eta}z_1,\ldots,e^{i\nu_N\eta}z_N)$ on $\C^N$.
	\end{theorem}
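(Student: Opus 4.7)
The plan is to combine the structural result that the transversal CR $\Real$-action on a compact strongly pseudoconvex CR manifold $X$ extends to a CR torus action (Corollary~\ref{cor:psc}) with the $S^1$-equivariant Boutet de Monvel embedding theorem of~\cite{HHL16}, and then simultaneously diagonalize the finite-dimensional embedding space under the full torus.

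First, apply Corollary~\ref{cor:psc} to write the $\Real$-action as $\eta\circ x=\rho(\eta)\cdot x$, where $\rho:\Real\to T^d$ is a Lie group homomorphism into some torus $T^d=(S^1)^d$ acting on $X$ by CR automorphisms. The infinitesimal generator $T$ of the $\Real$-action then lies in $\mathrm{Lie}(T^d)\cong\Real^d$ and is transversal CR. Since the transversality condition is open on $\mathrm{Lie}(T^d)$ and the rationals are dense, we may select a rational direction $v\in\mathbb{Q}^d$ close to $T$ whose generated subgroup closes to a circle $S^1\subset T^d$ still acting transversally CR on $X$.

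Next, apply the $S^1$-equivariant Boutet de Monvel theorem of~\cite{HHL16} to this $S^1$-action. This produces, for some sufficiently large $m_0$, a finite-dimensional subspace $V\subset C^\infty(X)$ of CR functions (namely the sum of the $S^1$-weight subspaces of CR functions with weight $\lvert m\rvert\leq m_0$) such that, for any orthonormal basis $(F_1,\ldots,F_N)$ of $V$ consisting of $S^1$-eigenfunctions with integer weights $m_1,\ldots,m_N$, the map $\Psi=(F_1,\ldots,F_N):X\to\Complex^N$ is a CR embedding equivariant under the weighted diagonal $S^1$-action with weights $m_j$.

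The final step is that, since $T^d$ is abelian, its action commutes with the $S^1$-generator and hence preserves each $S^1$-weight subspace, so it preserves $V$. Because $T^d$ acts unitarily on $V$ through commuting automorphisms (with respect to the $T^d$-invariant $L^2$ inner product), there is an orthonormal basis $(\tilde F_1,\ldots,\tilde F_N)$ of $V$ consisting of joint eigenfunctions of $T^d$ with weights $\beta_1,\ldots,\beta_N\in\mathbb{Z}^d$. The map $\tilde\Psi=(\tilde F_1,\ldots,\tilde F_N):X\to\Complex^N$ differs from $\Psi$ by a unitary change of basis on $\Complex^N$, so it is still a CR embedding, and composing with $\rho$ yields $\tilde\Psi(\eta\circ x)=(e^{i\nu_1\eta}\tilde F_1(x),\ldots,e^{i\nu_N\eta}\tilde F_N(x))$ where $\nu_j:=\langle\dot\rho(0),\beta_j\rangle\in\Real$. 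The main subtlety is verifying that $V$ is preserved by the full torus — this follows from commutativity of $T^d$ together with the rigidity and CR-invariance of the torus action supplied by Corollary~\ref{cor:psc}, which ensure that each $S^1$-weight subspace of CR functions is $T^d$-invariant.
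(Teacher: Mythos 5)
Your proposal is correct and follows essentially the same route as the paper's proof of Theorem~\ref{thm:equivariantembeddingpseudoconvex}: invoke Corollary~\ref{cor:psc} to realize the $\Real$-action inside a CR torus action, perturb the generator to a rational direction yielding a transversal CR $S^1$-action, apply the $S^1$-equivariant embedding theorem of~\cite{HHL16}, and use commutativity of the torus with the circle generator and with $\ddbar_b$ to refine the basis of the finite-dimensional spaces of CR functions into joint torus eigenfunctions, which changes the embedding only by a linear isomorphism of $\Complex^N$. The only cosmetic difference is the order of operations (the paper diagonalizes each weight space $\mathcal H^0_{b,m}(X)$ under the torus before invoking the embedding theorem, whereas you diagonalize afterwards), which is immaterial.
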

	
	\begin{corollary}\label{c-gue170817}
		Let \(X\) be a compact connected Sasakian manifold with Reeb vector field \(T\). There exist \(N\in\N\), \(\nu_1,\ldots,\nu_N\in\R\) and an equivariant embedding \(\map{\Phi}{X}{\C^N}\) with respect to the \(\R\)-action on \(X\) generated by \(T\) and the \(\R\)-action \(\eta\circ z=(e^{i\nu_1\eta}z_1,\ldots,e^{i\nu_N\eta}z_N)\) on \(\C^N\). Furthermore, we have
		\[\Phi_* T_x=i\sum_{j=1}^N\nu_j(z_j\frac{\partial}{\partial z_j}-\overline{z_j}\frac{\partial}{\partial \overline{z_j}})\left|_{z=\Phi(x)}\right..\]
	\end{corollary}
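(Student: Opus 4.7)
The plan is to deduce this essentially as a direct corollary of Theorem~\ref{t-gue170817}, with the only additional content being the formula for $\Phi_*T$. First, I would verify that the hypotheses of Theorem~\ref{t-gue170817} are satisfied. By definition of a Sasakian manifold, the underlying CR structure $(X, T^{1,0}X)$ is strongly pseudoconvex, and the Reeb vector field $T$ is a globally defined vector field which is transverse to the Levi distribution and whose flow preserves the CR structure $T^{1,0}X$ (i.e.\ $T$ is a CR vector field). Hence the $\R$-action generated by $T$ on $X$ is automatically a transversal CR $\R$-action. Moreover, compactness of $X$ is assumed. Thus Theorem~\ref{t-gue170817} applies and yields $N\in\N$, $\nu_1,\ldots,\nu_N\in\R$, and a CR embedding $\map{\Phi=(\Phi_1,\ldots,\Phi_N)}{X}{\C^N}$ satisfying
\[\Phi(\eta\circ x)=(e^{i\nu_1\eta}\Phi_1(x),\ldots,e^{i\nu_N\eta}\Phi_N(x))\]
for all $x\in X$ and $\eta\in\R$.

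For the pushforward formula, I would simply differentiate the equivariance relation with respect to $\eta$ at $\eta=0$. Since $T$ generates the $\R$-action on $X$, we have $T_x=\frac{d}{d\eta}\big|_{\eta=0}(\eta\circ x)$, so the chain rule gives
\[\Phi_*T_x=\frac{d}{d\eta}\Big|_{\eta=0}\Phi(\eta\circ x)=\left(i\nu_1\Phi_1(x),\ldots,i\nu_N\Phi_N(x)\right),\]
understood as a real tangent vector to $\C^N$ at the point $z=\Phi(x)$. Rewriting this real tangent vector in terms of the complex frame $\set{\partial/\partial z_j,\,\partial/\partial\ol{z_j}}$ using the identification of the infinitesimal generator of $\eta\circ z=(e^{i\nu_1\eta}z_1,\ldots,e^{i\nu_N\eta}z_N)$ with $i\sum_{j=1}^N\nu_j(z_j\partial/\partial z_j-\ol{z_j}\partial/\partial\ol{z_j})$ yields the claimed formula.

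There is no genuine obstacle: the work is entirely contained in Theorem~\ref{t-gue170817}. The only subtlety worth emphasizing in the write-up is that a Sasakian manifold does satisfy the hypothesis of being a strongly pseudoconvex CR manifold equipped with a transversal CR $\R$-action (a fact that uses only the definition of Sasakian structure and the properties of its Reeb vector field), and that the formula for $\Phi_*T$ is simply the statement that an equivariant map intertwines the infinitesimal generators of the two $\R$-actions.
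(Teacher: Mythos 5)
Your proposal is correct and matches the paper's (implicit) argument: the paper states Corollary~\ref{c-gue170817} as an immediate consequence of Theorem~\ref{t-gue170817}, relying on exactly the two observations you make — that a compact Sasakian manifold is a compact strongly pseudoconvex CR manifold whose Reeb flow is a transversal CR $\R$-action, and that the pushforward formula follows by differentiating the equivariance identity at $\eta=0$ and identifying $(i\nu_1 z_1,\ldots,i\nu_N z_N)$ with $i\sum_{j}\nu_j(z_j\partial/\partial z_j-\overline{z_j}\partial/\partial\overline{z_j})$. Your computation of the generator is correct, and no further content is needed.
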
	

\subsection{Applications: Torus equivariant Kodaira and Boutet de Monvel embedding theorems for CR manifolds}\label{s-gue170818t}

We will apply Theorem~\ref{t-gue150807I} to establish torus equivariant Kodaira embedding theorem for CR manifolds.
Let $(X, T^{1,0}X)$ be a compact connected orientable CR manifold of dimension $2n-1$, $n\geq2$.
We assume that $X$ admits a $d$-dimensional Torus  action \(T^d\curvearrowright X\) denoted by $(e^{i\theta_1},\ldots,e^{i\theta_d})$. Let $\mathfrak{g}$ denote the Lie algebra of $T^d$. For any $\xi \in \mathfrak{g}$, we write $\xi_X$ to denote the vector field on $X$ induced by $\xi$, that is, $(\xi_X u)(x)=\frac{\partial}{\partial t}\left(u(\exp(t\xi)\circ x)\right)|_{t=0}$, for any $u\in C^\infty(X)$. Let $\underline{\mathfrak{g}}={\rm Span\,}\set{\xi_X;\, \xi\in\mathfrak{g}}$. For every $j=1,\ldots,d$, let $T_j$ be the vector field on $X$ given by
\[(T_ju)(x)=\frac{\pr}{\pr\theta_j} u((1,\ldots,1,e^{i\theta_j},1,\ldots,1)\circ x)|_{\theta_j=0}.\]
We have $\underline{\mathfrak{g}}={\rm span\,}\set{T_j;\, j=1,\ldots,d}$. We assume that
\begin{equation}\label{e-gue170808}
\begin{split}
&[T_j, C^\infty(X,T^{1,0}X)]\subset C^\infty(X,T^{1,0}X),\ \ j=1,2,\ldots,d,\\
&{\rm span\,}\set{T^{1,0}X\oplus T^{0,1}X, \Complex\underline{\mathfrak{g}}}=\Complex TX.
\end{split}
\end{equation}
Suppose that $X$ admits a torus invariant CR line bundle $L$. For every $(m_1,\ldots,m_d)\in\mathbb Z^d$, put
\begin{equation}\label{e-gue170819m}
\begin{split}
&\mathcal{H}^0_{b,m_1,\ldots,m_d}(X,L^k)\\
&=\{u\in C^\infty(X,L^k);\, \ddbar_bu=0,  u((e^{i\theta_1},\ldots,e^{i\theta_d})\circ x)=e^{im_1\theta_1+\cdots+im_d\theta_d}u(x),\\
&\quad\forall x\in X,\ \ \forall (e^{i\theta_1},\ldots,e^{i\theta_d})\in T^d\}.\end{split}\end{equation}

\begin{theorem}\label{t-gue170808}
With the notations and assumptions used above, assume that $L$ admits a $T^d$-invariant Hermitian metric $h^L$ such that the induced curvature $R^L$ is positive.
Fix any $\beta_j\in\Real$, $j=1,\ldots,d$, where $\beta_j$, $j=1,\ldots,d$, are linear independent over $\mathbb Q$.
Then there is a $k_0>0$ such that for all $k\geq k_0$,
there is a torus equivariant CR embedding
\[\begin{split}
\Phi_k: X&\To \Complex\mathbb P^{N_k-1},\\
x&\mapsto[g_1(x),\ldots,g_{N_k}(x)],
\end{split}\]
such that $g_j\in\mathcal{H}^0_{b,m_{j_1},\ldots,m_{j_d}}(X,L^k)$, for some $(m_{j_1},\ldots, m_{j_d})\in\mathbb Z^d$ with
\[\abs{m_{j_1}\beta_1+\cdots+m_{j_d}\beta_d}\leq k\delta,\]
$j=1,\ldots,N_k$, where $N_k\in\mathbb N$.
\end{theorem}

\begin{proof}
From \eqref{e-gue170808}, there is a real non-vanishing vector field $T\in\underline{\mathfrak{g}}$ such that
\[T^{1,0}X\oplus T^{0,1}X\oplus\Complex T=\Complex TX.\]
By continuity, we may take $T=\beta_1T_1+\cdots+\beta_dT_d$. Then, $X$ admits a locally free $\Real$-action $\eta$:
\[\begin{split}
\eta: X&\To X,\\
x&\mapsto \eta\circ x=(e^{i\beta_1\eta},\ldots,e^{i\beta_d\eta})\circ x,
\end{split}\]
 and $T$ is the infinitesimal generator of the $\Real$-action $\eta$. From \eqref{e-gue170808}, we see that the $\Real$-action is transversal and CR.
 Take a $T^d$-invariant Hermitian metric $h^L$ on $L$ such that the induced curvature $R^L$ is positive. Then, $h^L$ is also $T$-rigid. As before, let
$\langle\,\cdot\,|\,\cdot\,\rangle$ be the rigid Hermitian metrix on $\Complex TX$ induced by $R^L$ such that
$$T^{1,0}X\perp T^{0,1}X,\:\: T\perp (T^{1,0}X\oplus T^{0,1}X),\:\:
\langle\,T\,|\,T\,\rangle=1$$ and let $(\,\cdot\,|\,\cdot\,)_k$ be the $L^2$ inner product on $C^\infty(X,L^k)$ induced by $h^{L^k}$ and $\langle\,\cdot\,|\,\cdot\,\rangle$.
For every $(m_1,\ldots,m_d)\in\mathbb Z^d$, put
\[\begin{split}
&C^\infty_{m_1,\ldots,m_d}(X,L^k):=\{u\in C^\infty(X,L^k);\, u((e^{i\theta_1},\ldots,e^{i\theta_d})\circ x)=e^{im_1\theta_1+\cdots+im_d\theta_d}u(x),\\
&\quad\forall x\in X,\ \ \forall (e^{i\theta_1},\ldots,e^{i\theta_d})\in T^d\}.\end{split}\]
For $u\in C^\infty(X)$, we have the orthogonal decomposition with respect to $(\,\cdot\,|\,\cdot\,)_k$:
 \begin{equation}\label{e-gue170809yI}
 u(x)=\sum_{(m_1,\ldots,m_d)\in\mathbb Z^d}u_{m_1,\ldots,m_d}(x),\ \ u_{m_1,\ldots,m_d}(x)\in C^\infty_{m_1,\ldots,m_d}(X,L^k).
 \end{equation}
 From \eqref{e-gue170809yI} and note that $\beta_j$, $j=1,\ldots,d$, are linear independent over $\mathbb Q$, it is easy to check that
 \begin{equation}\label{e-gue170809y}
 C^\infty_{m_1,\ldots,m_d}(X,L^k)=C^\infty_\alpha(X,L^k),\ \ \alpha=\beta_1m_1+\cdots+\beta_dm_d\in{\rm Spec\,}(-iT),
 \end{equation}
 where $C^\infty_\alpha(X,L^k)$ is as in \eqref{e-gue150806I}. From \eqref{e-gue170809y}, we conclude that for every $\alpha\in {\rm Spec\,}(-iT)$, we have
\begin{equation}\label{e-gue170809}
\mathcal{H}^0_{b,\alpha}(X,L^k)=\mathcal{H}^0_{b,m_1,\ldots,m_d}(X,L^k),\ \ \alpha=m_1\beta_1+\cdots+m_d\beta_d,
\end{equation}
where $\mathcal{H}^0_{b,\alpha}(X,L^k)$ is as in \eqref{e-gue150806II}. By Theorem~\ref{t-gue150807I}, we can find an orthonormal basis
$\{g_j\}_{j=1}^{d_k}$ of
$\mathcal H^0_{b, \leq k\delta}(X, L^k)$
with respect to $(\,\cdot\,|\,\cdot\,)_k$ such that
$g_j\in\mathcal H^0_{b, \alpha_j}(X, L^k)$, for some $\alpha_j\in{\rm Spec\,}(-iT)$,
the map $\Phi_{k,\delta}$ introduced in \eqref{e-gue150807h0}
is a smooth CR embedding, where $\mathcal H^0_{b, \leq k\delta}(X, L^k)$ is as in \eqref{e-gue150806IV}.
From \eqref{e-gue170809}, we see that each $g_j$ is in $\mathcal{H}^0_{b,m_{j_1},\ldots,m_{j_d}}(X,L^k)$, for some $(m_{j_1},\ldots,m_{j_d)}\in\mathbb Z^d$ with
$\abs{m_{j_1}\beta_1+\cdots+m_{j_d}\beta_d}\leq k\delta$.
The theorem follows.
\end{proof}

Let $(X, T^{1,0}X)$ be a compact connected  strongly pseudoconvex CR manifold of dimension $2n-1$, $n\geq2$.
We assume that $X$ admits a $d$-dimensional torus action $(e^{i\theta_1},\ldots,e^{i\theta_d})$. Assume that this torus action satisfies \eqref{e-gue170808}.
For every $(m_1,\ldots,m_d)\in\mathbb Z^d$, put
\[\begin{split}
&\mathcal{H}^0_{b,m_1,\ldots,m_d}(X)\\
&=\{u\in C^\infty(X);\, \ddbar_bu=0,  u((e^{i\theta_1},\ldots,e^{i\theta_d})\circ x)=e^{im_1\theta_1+\cdots+im_d\theta_d}u(x),\\
&\quad\forall x\in X,\ \ \forall (e^{i\theta_1},\ldots,e^{i\theta_d})\in T^d\}.\end{split}\]
From Theorem~\ref{t-gue170817} and by repeating the proof of Theorem~\ref{t-gue170808} with minor change, we obtain torus equivariant Boutet de Monvel embedding theorem for strong pseudoconvex CR manifolds.

\begin{theorem}\label{t-gue170817y}
With the assumptions and notations used above,
there is a torus equivariant CR embedding
\[\begin{split}
\Phi: X&\To\mathbb C^{N},\\
x&\mapsto(g_1(x),\ldots,g_N(x)),
\end{split}\]
such that $g_j\in\mathcal{H}^0_{b,m_{j_1},\ldots,m_{j_d}}(X)$, for some $(m_{j_1},\ldots, m_{j_d})\in\mathbb Z^d$, $j=1,\ldots,N$.
\end{theorem}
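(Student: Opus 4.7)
The plan is to follow the proof of Theorem~\ref{t-gue170808} almost verbatim, replacing the application of Theorem~\ref{t-gue150807I} by Theorem~\ref{t-gue170817} as the source of the equivariant embedding. First, using the spanning condition in \eqref{e-gue170808}, I would pick a real vector field $T\in\underline{\mathfrak{g}}$ with $T^{1,0}X\oplus T^{0,1}X\oplus\Complex T=\Complex TX$. Since $X$ is compact, this transversality condition is open in the coefficients of $T$ in the basis $T_1,\ldots,T_d$, and $\mathbb{Q}^d$ is dense in $\Real^d$, so after an arbitrarily small perturbation I can arrange $T=\beta_1T_1+\cdots+\beta_dT_d$ with $\beta_1,\ldots,\beta_d$ linearly independent over $\mathbb{Q}$ while still preserving transversality. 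The flow of $T$ then defines a transversal CR $\Real$-action $\eta\circ x=(e^{i\beta_1\eta},\ldots,e^{i\beta_d\eta})\circ x$ on the strongly pseudoconvex CR manifold $X$.

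Next I would apply Theorem~\ref{t-gue170817} to this $\Real$-action and obtain $N\in\mathbb{N}$, real numbers $\nu_1,\ldots,\nu_N$, and a CR embedding $\Phi=(\Phi_1,\ldots,\Phi_N):X\to\Complex^N$ satisfying
\[\Phi(\eta\circ x)=(e^{i\nu_1\eta}\Phi_1(x),\ldots,e^{i\nu_N\eta}\Phi_N(x)),\quad x\in X,\ \eta\in\Real.\]
Differentiating at $\eta=0$ gives $-iT\Phi_j=\nu_j\Phi_j$ for every $j$, so each component $\Phi_j$ is a CR function lying in the $\nu_j$-eigenspace of $-iT$. The remaining point is to show that each $\Phi_j$ is actually supported in a single torus isotype. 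Decomposing via the $T^d$-action, $\Phi_j=\sum_{m\in\mathbb{Z}^d}\Phi_{j,m}$ with $\Phi_{j,m}\in\mathcal{H}^0_{b,m_1,\ldots,m_d}(X)$ (the sum converging in $C^\infty(X)$ since $T^d$ is compact and $\Phi_j$ smooth), and each $\Phi_{j,m}$ automatically satisfies $-iT\Phi_{j,m}=(m_1\beta_1+\cdots+m_d\beta_d)\Phi_{j,m}$. Comparing with $-iT\Phi_j=\nu_j\Phi_j$ and using the orthogonality of distinct torus isotypes (with respect to any $T^d$-invariant $L^2$ inner product), only those $m$ with $m\cdot\beta=\nu_j$ contribute. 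The $\mathbb{Q}$-linear independence of $\beta_1,\ldots,\beta_d$ forces this equation to have at most one integer solution $m=(m_{j_1},\ldots,m_{j_d})$, hence $\Phi_j\in\mathcal{H}^0_{b,m_{j_1},\ldots,m_{j_d}}(X)$. Setting $g_j:=\Phi_j$ and checking $T^d$-equivariance one factor at a time then reduces to the statement that $-iT_\ell\Phi_j=m_{j_\ell}\Phi_j$, which is automatic once $\Phi_j$ is known to lie in the single isotype.

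The one step that is not purely mechanical is the promotion of a single $-iT$-eigenfunction to a single torus isotype, and this is precisely where the $\mathbb{Q}$-linear independence of the $\beta_j$ is used decisively. Apart from that, the argument is a direct transcription of the proof of Theorem~\ref{t-gue170808}, with the sole modification that the strongly pseudoconvex equivariant embedding (Theorem~\ref{t-gue170817}) stands in for the equivariant Kodaira embedding for positive rigid line bundles.
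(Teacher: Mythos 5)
Your proposal is correct and follows essentially the same route as the paper, whose proof of Theorem~\ref{t-gue170817y} is literally ``repeat the proof of Theorem~\ref{t-gue170808} with minor change, using Theorem~\ref{t-gue170817} in place of Theorem~\ref{t-gue150807I}''; your key step (a $-iT$-eigenfunction lies in a single $T^d$-isotype because the $\beta_j$ are linearly independent over $\mathbb{Q}$) is exactly the identification \eqref{e-gue170809y}--\eqref{e-gue170809} used there. The only cosmetic difference is that you recover the eigenvalue relation $-iT\Phi_j=\nu_j\Phi_j$ by differentiating the equivariance identity from Theorem~\ref{t-gue170817}, whereas the paper's construction already produces components in single isotypes; both amount to the same argument.
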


\subsection{Applications: Torus equivariant Kodaira embedding theorem for complex manifolds}\label{s-gue170819}

Let $(E,h^E)$ be a holomorphic line bundle over a connected compact complex manifold $(M,J)$ with ${\rm dim\,}_{\Complex}M=n$, where $J$ denotes the complex structure map of $M$ and $h^E$ is a Hermitian fiber metric of $E$. 
Assume that $(M,J)$ admits a holomorphic d-dimensional torus action \(T^d\curvearrowright X\) denoted by $(e^{i\theta_1},\ldots,e^{i\theta_d})$ and that the action lifts to a holomorphic action on $E$.
For $(m_1,\ldots,m_d)\in\mathbb Z^d$, put
\begin{equation}\label{e-gue170819mp}
\begin{split}
&\mathcal{H}^0_{m_1,\ldots,m_d}(M,E^k)\\
&=\{u\in C^\infty(M,E^k);\, \ddbar u=0,  u((e^{i\theta_1},\ldots,e^{i\theta_d})\circ x)=e^{im_1\theta_1+\cdots+im_d\theta_d}u(x),\\
&\quad\forall x\in M,\ \ \forall (e^{i\theta_1},\ldots,e^{i\theta_d})\in T^d\}.\end{split}\end{equation}
We have the following Torus equivariant Kodaira embedding theorem

\begin{theorem}\label{t-gue170819mp}
With the notations and assumptions used above, assume that $E$ admits a $T^d$-invariant Hermitian metric $h^E$ such that the induced curvature $R^E$ is positive.
Fix any $\beta_j\in\Real$, $j=1,\ldots,d$, where $\beta_j$, $j=1,\ldots,d$, are linear independent over $\mathbb Q$.
Then there is a $k_0>0$ such that for all $k\geq k_0$,
there is a torus equivariant holomorphic embedding
\[\begin{split}
\phi_k: M&\To \Complex\mathbb P^{N_k-1},\\
x&\mapsto[q_1(x),\ldots,q_{N_k}(x)],
\end{split}\]
such that $q_j\in\mathcal{H}^0_{m_{j_1},\ldots,m_{j_d}}(M,E^k)$ with
\[\abs{m_{j_1}\beta_1+\cdots+m_{j_d}\beta_d+m_{j_{d+1}}}\leq k\delta,\]
for some $(m_{j_1},\ldots, m_{j_d},m_{j_{d+1}})\in\mathbb Z^{d+1}$, $j=1,\ldots,N_k$, where $N_k\in\mathbb N$.
\end{theorem}

\begin{proof}
We will use the same notations as in the proof of Theorem~\ref{t-gue170808}.
Consider $X:=M\times S^1$. Then $X$ is a compact connected CR manifold with CR structure $T^{1,0}_{(x,e^{iu})}X:=T^{1,0}_xM$, for every $(x,e^{iu})\in M\times S^1$. Then $X$ admits a \(T^{d+1}\)-action $(e^{i\theta_1},\ldots,e^{i\theta_d},e^{i\theta_{d+1}})$:
\[(e^{i\theta_1},\ldots,e^{i\theta_d},e^{i\theta_{d+1}})\circ (x,e^{iu}):=((e^{i\theta_1},\ldots,e^{i\theta_d})\circ x, e^{i\theta_{d+1}+iu}),\  \ \forall(x,e^{iu})\in M\times S^1.\]
It is clear that $E$ is a $T^{d+1}$-invariant CR line bundle over $X$ and the \(T^{d+1}\)-action satisfies \eqref{e-gue170808}.
By Theorem~\ref{t-gue170808}, there is a $k_0>0$ such that for all $k\geq k_0$, we can find a CR embedding the map
\begin{equation}\label{e-gue170819mpy}
x\in X\longmapsto\big[f_1(x),\ldots,f_{N_k}(x)\big]\in\mathbb C\mathbb P^{N_k-1}
\end{equation}
such that each $f_j$ is in $\mathcal{H}^0_{b,m_{j_1},\ldots,m_{j_{d+1}}}(X,L^k)$  with $\abs{m_{j_1}\beta_1+\cdots+m_{j_d}\beta_d+m_{j_{d+1}}}\leq k\delta$, for some $(m_{j_1},\ldots, m_{j_d},m_{j_{d+1}})\in\mathbb Z^{d+1}$, $j=1,\ldots,N_k$, where $N_k\in\mathbb N$.
For every $j=1,\ldots,N_k$, let $q_j(x):=f_j(x,e^{iu})|_{u=0}$. Then, $q_j(x)\in\mathcal H^0_{m_1,\ldots,m_d}(M,E^k)$, for some $(m_1,\ldots,m_d)\in\mathbb Z^d$.
It is not difficult  to check that
the map
\[
x\in M\longmapsto\big[q_1(x),\ldots,q_{N_k}(x)\big]\in\mathbb C\mathbb P^{N_k-1}
\]
is a holomorphic embedding. The theorem follows.
\end{proof}

\section{Preliminaries}\label{s:prelim}

\subsection{Some standard notations}\label{s-gue150508b}
We use the following notations: $\mathbb N=\set{1,2,\ldots}$,
$\mathbb N_0=\mathbb N\cup\set{0}$, $\Real$
is the set of real numbers, $\ol\Real_+:=\set{x\in\Real;\, x\geq0}$.
For a multiindex $\alpha=(\alpha_1,\ldots,\alpha_m)\in\mathbb N_0^m$
we set $\abs{\alpha}=\alpha_1+\cdots+\alpha_m$. For $x=(x_1,\ldots,x_m)\in\Real^m$ we write
\[
\begin{split}
&x^\alpha=x_1^{\alpha_1}\ldots x^{\alpha_m}_m,\quad
 \pr_{x_j}=\frac{\pr}{\pr x_j}\,,\quad
\pr^\alpha_x=\pr^{\alpha_1}_{x_1}\ldots\pr^{\alpha_m}_{x_m}=\frac{\pr^{\abs{\alpha}}}{\pr x^\alpha}\,,\\
&D_{x_j}=\frac{1}{i}\pr_{x_j}\,,\quad D^\alpha_x=D^{\alpha_1}_{x_1}\ldots D^{\alpha_m}_{x_m}\,,
\quad D_x=\frac{1}{i}\pr_x\,.
\end{split}
\]
Let $z=(z_1,\ldots,z_m)$, $z_j=x_{2j-1}+ix_{2j}$, $j=1,\ldots,m$, be coordinates of $\Complex^m$,
where
$x=(x_1,\ldots,x_{2m})\in\Real^{2m}$ are coordinates in $\Real^{2m}$.
Throughout the paper we also use the notation
$w=(w_1,\ldots,w_m)\in\Complex^m$, $w_j=y_{2j-1}+iy_{2j}$, $j=1,\ldots,m$, where
$y=(y_1,\ldots,y_{2m})\in\Real^{2m}$.
We write
\[
\begin{split}
&z^\alpha=z_1^{\alpha_1}\ldots z^{\alpha_m}_m\,,\quad\ol z^\alpha=\ol z_1^{\alpha_1}\ldots\ol z^{\alpha_m}_m\,,\\
&\pr_{z_j}=\frac{\pr}{\pr z_j}=
\frac{1}{2}\Big(\frac{\pr}{\pr x_{2j-1}}-i\frac{\pr}{\pr x_{2j}}\Big)\,,\quad\pr_{\ol z_j}=
\frac{\pr}{\pr\ol z_j}=\frac{1}{2}\Big(\frac{\pr}{\pr x_{2j-1}}+i\frac{\pr}{\pr x_{2j}}\Big),\\
&\pr^\alpha_z=\pr^{\alpha_1}_{z_1}\ldots\pr^{\alpha_m}_{z_m}=\frac{\pr^{\abs{\alpha}}}{\pr z^\alpha}\,,\quad
\pr^\alpha_{\ol z}=\pr^{\alpha_1}_{\ol z_1}\ldots\pr^{\alpha_m}_{\ol z_m}=
\frac{\pr^{\abs{\alpha}}}{\pr\ol z^\alpha}\,.
\end{split}
\]

Let $X$ be a $C^\infty$ orientable paracompact manifold.
We let $TX$ and $T^*X$ denote the tangent bundle of $X$ and the cotangent bundle of $X$ respectively.
The complexified tangent bundle of $X$ and the complexified cotangent bundle of $X$
will be denoted by $\Complex TX$ and $\Complex T^*X$ respectively. We write $\langle\,\cdot\,,\cdot\,\rangle$
to denote the pointwise duality between $TX$ and $T^*X$.
We extend $\langle\,\cdot\,,\cdot\,\rangle$ bilinearly to $\Complex TX\times\Complex T^*X$.

Let $E$ be a $C^\infty$ vector bundle over $X$. The fiber of $E$ at $x\in X$ will be denoted by $E_x$.
Let $F$ be another vector bundle over $X$. We write
$F\boxtimes E^*$ to denote the vector bundle over $X\times X$ with fiber over $(x, y)\in X\times X$
consisting of the linear maps from $E_y$ to $F_x$.

Let $Y\subset X$ be an open set. The spaces of
smooth sections of $E$ over $Y$ and distribution sections of $E$ over $Y$ will be denoted by $C^\infty(Y, E)$ and $\mathscr D'(Y, E)$ respectively.
Let $\mathscr E'(Y, E)$ be the subspace of $\mathscr D'(Y, E)$ whose elements have compact support in $Y$.
For $m\in\Real$, we let $H^m(Y, E)$ denote the Sobolev space
of order $m$ of sections of $E$ over $Y$. Put
\begin{gather*}
H^m_{\rm loc\,}(Y, E)=\big\{u\in\mathscr D'(Y, E);\, \varphi u\in H^m(Y, E),
      \,\forall\varphi\in C^\infty_0(Y)\big\}\,,\\
      H^m_{\rm comp\,}(Y, E)=H^m_{\rm loc}(Y, E)\cap\mathscr E'(Y, E)\,.
\end{gather*}

\subsection{CR manifolds with $\Real$-action} \label{s-gue150808}

Let $(X, T^{1,0}X)$ be a compact CR manifold of dimension $2n-1$, $n\geq 2$, where $T^{1,0}X$ is a CR structure of $X$. That is $T^{1,0}X$ is a subbundle of rank $n-1$ of the complexified tangent bundle $\mathbb{C}TX$, satisfying $T^{1,0}X\cap T^{0,1}X=\{0\}$, where $T^{0,1}X=\overline{T^{1,0}X}$, and $[\mathcal V,\mathcal V]\subset\mathcal V$, where $\mathcal V=C^\infty(X, T^{1,0}X)$. We assume that $X$ admits a $\Real$-action $\eta$, $\eta\in\Real$: $\eta: X\to X$, $x\mapsto\eta\circ x$.  Let $T\in C^\infty(X, TX)$ be the infinitesimal generator of   the $\Real$-action which  is given by
\begin{equation}\label{e-gue150808}
(Tu)(x)=\frac{\partial}{\partial \eta}\left(u(\eta\circ x)\right)|_{\eta=0},\ \ u\in C^\infty(X).
\end{equation}

\begin{definition}
We say that the $\Real$-action $\eta$ is CR if
$[T, C^\infty(X, T^{1,0}X)]\subset C^\infty(X, T^{1,0}X)$ and the $\Real$-action is transversal if for each $x\in X$,
$\Complex T(x)\oplus T_x^{1,0}(X)\oplus T_x^{0,1}X=\mathbb CT_xX$. Moreover, we say that the $\Real$-action is locally free if $T\neq0$ everywhere.
\end{definition}

Assume that $(X, T^{1,0}X)$ is a compact CR manifold of dimension $2n-1$, $n\geq 2$, with a transversal CR $\Real$-action $\eta$ and we let $T$ be the global vector field induced by the $\Real$-action. Let $\omega_0\in C^\infty(X,T^*X)$ be the global real one form determined by
\begin{equation}\label{e-gue150808I}
\begin{split}
&\langle\,\omega_0\,,\,u\,\rangle=0,\ \ \forall u\in T^{1,0}X\oplus T^{0,1}X,\\
&\langle\,\omega_0\,,\,T\,\rangle=-1.
\end{split}
\end{equation}

\begin{definition}\label{d-gue150808}
For $p\in X$, the Levi form $\mathcal L_p$ is the Hermitian quadratic form on $T^{1,0}_pX$ given by
$\mathcal{L}_p(U,\ol V)=-\frac{1}{2i}\langle\,d\omega_0(p)\,,\,U\wedge\ol V\,\rangle$, $U, V\in T^{1,0}_pX$.
\end{definition}

Denote by $T^{*1,0}X$ and $T^{*0,1}X$ the dual bundles of
$T^{1,0}X$ and $T^{0,1}X$ respectively. Define the vector bundle of $(0,q)$ forms by
$T^{*0,q}X=\Lambda^q(T^{*0,1}X)$.
Let $D\subset X$ be an open set. Let $\Omega^{0,q}(D)$
denote the space of smooth sections of $T^{*0,q}X$ over $D$ and let $\Omega_0^{0,q}(D)$
be the subspace of $\Omega^{0,q}(D)$ whose elements have compact support in $D$. Similarly, if $E$ is a vector bundle over $D$, then we let $\Omega^{0,q}(D, E)$ denote the space of smooth sections of $T^{*0,q}X\otimes E$ over $D$ and let $\Omega_0^{0,q}(D, E)$ be the subspace of $\Omega^{0,q}(D, E)$ whose elements have compact support in $D$.

As in $S^1$-action case (see Section 2.3 in~\cite{HLM16}), for $u\in\Omega^{0,q}(X)$, we define
\begin{equation}\label{e-gue150508faIIq}
Tu:=\frac{\pr}{\pr\eta}\bigr(\eta^*u\bigr)|_{\eta=0}\in\Omega^{0,q}(X),
\end{equation}
where $\eta^*: T^{*0,q}_{\eta\circ x}X\To T^{*0,q}_{x}X$ is the pull-back map of $\eta$. Let $\ddbar_b:\Omega^{0,q}(X)\rightarrow\Omega^{0,q+1}(X)$ be the tangential Cauchy-Riemann operator.
Since the $\Real$-action is CR, as in $S^1$-action case (see Section 2.4 in~\cite{HLM16}), we have
\[T\ddbar_b=\ddbar_bT\ \ \mbox{on $\Omega^{0,q}(X)$}.\]

\begin{definition}\label{d-gue50508d}
Let $D\subset U$ be an open set. We say that a function $u\in C^\infty(D)$ is rigid if $Tu=0$. We say that a function $u\in C^\infty(X)$ is Cauchy-Riemann (CR for short)
if $\ddbar_bu=0$. We say that $u\in C^\infty(X)$ is rigid CR if  $\ddbar_bu=0$ and $Tu=0$.
\end{definition}

\subsection{Rigid CR bundles}
Let \((X,T^{1,0}X)\), \(\dim X=2n+d\), be a CR manifold of codimension \(d\in\N\) and CR dimension \(n\in\N\). The following definitions for CR vector bundles can be found in \cite{HN00}.
\begin{definition}\label{Def:CRVB}
	A complex vector bundle \((E,\pi,X)\) over \(X\) is called CR vector bundle if
	\begin{itemize}
		\item [(i)] \(E\) is a CR manifold of codimension \(d\),
		\item [(ii)] \(\pi\colon E\to X\) is a CR submersion,
		\item [(iii)] \(E\oplus E\ni(\xi_1,\xi_2)\to \xi_1+\xi_2\in E\) and \(\C\times E\ni(\lambda,\xi)\to \lambda \xi\in E\) are CR maps.
	\end{itemize}
	A smooth section \(s\in C^\infty(U,E)\) defined on an open set \(U\subset X\) is called CR section if the map \(s\colon U\to E\) is CR.
\end{definition}

Let \((E_1,\pi_1,X)\) and \((E_2,\pi_2,X)\) be two CR vector bundles over X. A map \(F\colon E_1\to E_2\) is called a CR bundle isomorphism if \(F\) is a  \(C^\infty\)-diffeomorphism such that \(F,F^{-1}\) are CR maps, \(\pi_2\circ F=\pi_1\) and \(F\) is fiberwise linear.


Given a CR vector bundle \((E,\pi,X)\) we find (see \cite{HN00}) the linear partial differential  operator \(\overline{\partial}^E_b\colon C^\infty(X,E)\to C^\infty(X,E\otimes T^{\ast 0, 1}X)\) satisfying
\begin{itemize}
	\item [(a)] \(\overline{\partial}^E_b(f\cdot s)=s\overline{\partial}_b(f)+f\overline{\partial}^E_b(s)\) for all \(f\in C^\infty(X)\) and \(s\in C^\infty(X,E)\),
	\item [(b)] \(s\in C^\infty(U,E)\) is a CR section if and only if \(\overline{\partial}^E_bs=0\).
\end{itemize}
\begin{definition}\label{Def:LocTriv}
 A CR vector bundle \((E,\pi,X)\) of rank \(r\) is called locally CR trivializable if for any point \(p\in X\)  there exists an open neighborhood \(U\subset X\) such that \(E|_U\) is CR vector bundle isomorphic to the trivial CR vector bundle \(U\times \C^r\).
\end{definition}

The following lemma is well-known.

\begin{lemma}\label{Lem:LocCR}
	Let \((E,\pi,X)\) be a CR vector bundle. The following are equivalent:
	\item [(i)] \((E,\pi,X)\) is locally CR trivializable,
	\item [(ii)] For any \(p\in X\) there exists a smooth frame \(\{f_1,\ldots,f_r\}\) of \(E|_U\) on an open neighborhood \(U\subset X\) around \(p\) such that \(f_1,\ldots,f_r\colon U\to E\) are CR sections.
\end{lemma}
	\begin{proof}
	Let \(p\in X\) be a point. Assuming that \((E,\pi,X)\) is locally CR trivializable we find an open neighborhood \(U\subset X\) around \(p\) and a CR bundle isomorphism \(F\colon  U\times\C^r\to E|_U\). For  \(1\leq j\leq r\) let \(e_j\in\C^r\) be the vector which has a one at the \(j\)-th position and zeros everywhere else. Then we have that \(x\mapsto (x,e_j)\) defines a CR map between \(U\) and \(U\times \C^r\). Putting \(f_j\colon U\to E|_U\), \(f_j(x)=F(x,e_j)\) it follows that \(f_j\) is a smooth CR map and since \(F\) is a bundle map we find that \(f_j\) is a CR section for any \(1\leq j\leq r\). For \(x\in U\) assume \(\sum_{j=1}^r\lambda_jf_j(x)=0\) for some \(\lambda_1,\ldots,\lambda_r\in\C\). We find \(0=F(x,(\lambda_1,\ldots,\lambda_r))\) and since \(F\) is a bundle isomorphism we must have \(\lambda_1=\ldots=\lambda_r=0\). Hence \(\{f_1(x),\ldots,f_r(x)\}\) is linear independent for any \(x\in U\).
	
	Now let \(\{f_1,\ldots,f_r\}\) be a smooth frame of \(E|_U\) such that \(f_j\colon U\to E|_U\) is a CR map for any \(1\leq j\leq r\). From (iii) in Definition~\ref{Def:CRVB} it follows that \(F\colon U\times \C^r\to E|_U\), \(F(x,(\lambda_1,\ldots,\lambda_r))=\sum_{j=1}^r\lambda_jf_j(x)\) is a CR map. By construction we have that \(F\) is a bundle isomorphism and since \(\{f_1,\ldots,f_r\}\) is a smooth frame we have that \(F\) is a diffeomorphism. Then we just need to show that \(dF(T^{1,0}(U\times\C^r))=T^{1,0}E|_U\) in order to prove that \(F^{-1}\) is a CR map. The map \(F\) is CR which means \(dF(T^{1,0}(U\times\C^r))\subset T^{1,0}E|_U\). Furthermore, we have that \(dF\) is injective at any point which implies \(\dim_\C dF(T^{1,0}(U\times\C^r))=n+r=\dim_\C T^{1,0}E|_U\) and the claim follows.
\end{proof}

\begin{remark}\label{Rmk:TransCR}
Let \(\{f_1,\ldots,f_r\}\) be a frame of \(E|_U\) for some open set \(U\subset X\). Then \(\{f_1,\ldots,f_r\}\) is called CR frame if any \(f_k\), \(1\leq k\leq r\), is a CR section. Given two CR frames of \(E|_U\) we find by (a) and (b) that the corresponding transition matrix is CR in the sense that any entry is a CR function.
\end{remark}

\begin{definition}\label{Def:CRBL}
Let $(X, T^{1, 0}X)$ be a CR manifold of codimension \(d\) and let \(T\in C^\infty(X,TX)\) be a CR vector field (that is \([T, C^\infty(X, T^{1,0}X)]\subset C^\infty(X, T^{1,0}X) \)).
 A CR bundle lift of \(T\) to \((E,\pi,X)\) is a linear partial differential operator \(T^E\colon   C^\infty(X,E)\to   C^\infty(X,E)\) (with smooth coefficients) such that
	\begin{itemize}
		\item [(i)] \(T^{E}(f\cdot s)=T(f)\cdot s+fT^E(s)\) for all \(f\in C^\infty(X)\) and \(s\in  C^\infty(X,E)\),
		\item [(ii)] \([T^{E},\overline{\partial}^E_b]=0\).
	\end{itemize}
\end{definition}
In order to define \([T^{E},\overline{\partial}^E_b]\) we need to define \(T^{E}\) on \((0,1)\) forms with values in \(E\) first. But this definition follows immediately from the fact that any $w\in  C^\infty(X, E\otimes T^{\ast 0, 1}X)$ locally can be written $w=\sum_{j=1}^r f_j \otimes\omega^j$ where $\{\omega^j\}$ are $(0, 1)$-forms and $\{f_j\}$ are local frames of $E$  and that \(T\) is defined also for \((0 ,q)\)-forms using the Lie derivative.
\begin{definition}\label{Def:RigidCVB}
	Let $(X, T^{1, 0}X)$ be a CR manifold of codimension \(d\) and let \(T\in  C^\infty(X,TX)\) be a CR vector field.
	A CR vector bundle \((E,\pi,X)\) of rank \(r\) over \(X\) with a CR bundle lift \(T^E\) of \(T\)  is called rigid CR (with respect to \(T^E\)) if for every point \(p\in X\) there exists an open neighborhood \(U\) around \(p\) and a CR frame \(\{f_1,\ldots,f_r\}\) of \(E|_U\) with \(T^E(f_j)=0\) for \(1\leq j\leq r\).
\end{definition}
A section $s\in  C^\infty(X, E)$ is called a rigid CR section if $T^E s=0$ and $\overline\partial^E_b s=0$. The frame $\{f_j\}_{j=1}^r$ in Definition~\ref{Def:RigidCVB}  is called a rigid CR frame of $E|_U$.
Note that it follows from Lemma~\ref{Lem:LocCR} that any rigid CR vector bundle is locally CR trivializable.

\begin{lemma}\label{Thm:LocTRigid}
	Let \((E,\pi,X)\) be CR vector bundle over a CR manifold \((X,T^{1,0}X)\) of codimension \(d\) and let  \(T\in  C^\infty(X,TX)\) be a CR vector field. The following are equivalent:
	\begin{itemize}
		\item [(i)] \(T\) has a CR bundle lift \(T^E\) such that \((E,\pi,X)\) is rigid CR with respect to \(T^E\).
		\item [(ii)] There exist an open cover \(\{U_j\}_{j\in\N}\) of \(X\) and CR frames \(\{f^j_1,\ldots,f_r^j\}\) for \(E|_{U_j}\), \(j\in\N\), such that the corresponding transition matrices are rigid CR in the sense that any entry is a rigid CR function.
	\end{itemize}
\end{lemma}
Recall that a function \(f\in C^\infty(X)\) is rigid if \(Tf=0\) holds.

\begin{proof}
	In order to prove "(ii)\(\Rightarrow\) (i)`` define a CR bundle lift \(T^E\) of \(T\) as follows: Given a smooth section \(s\in  C^\infty(X,E)\) and a point \(p\in X\) write  \(s|_{U_j}=\sum_{k=1}^ra^j_kf_k^j\) for any \(j\in \N\) with \(p\in U_j\) where \(a_k^j\) are smooth functions  on \(U_j\). Then define \(T^E(s)(p)=\sum_{k=1}^rT(a^j_k)f_k^j\). The definition is independent of \(j\) since the transition matrices are rigid. Since \(T\) satisfies the Leibniz rule the same holds for \(T^E\) and since \([T,\overline{\partial}_b]=0\) and the local frames
	\(\{f^j_1,\ldots,f_r^j\}\), \(j\in\N\),  are CR we find \([T^{E},\overline{\partial}^E_b]=0\). By construction we find that the frames \(\{f^j_1,\ldots,f_r^j\}\) are rigid CR and hence that  \((E,\pi,X)\) is rigid CR with respect to \(T^E\).
	
	The implication ''(i)\(\Rightarrow\) (ii)`` follows from Definition~\ref{Def:RigidCVB}:
	For any point \(p\in X\) we find an open neighborhood \(U_p\) around \(p\) and a CR frame \(\{f_1^p,\ldots,f_r^p\}\) of \(E|_{U_p}\) with \(T^E(f_l^p)=0\) for \(1\leq l\leq r\). Since \(X\) is a manifold we can choose \(\{p_j\}_{j\in \N}\) such that \(\{U_{p_j}\}_{j\in\N}\) is an open cover of \(X\). For \(j\in\N\) and \(1\leq l\leq r\) put \(f_l^j:=f_l^{p_j}\) and \(U_j:=U_{p_j}\). Given \(j,k\in\N\) with \(U_k\cap U_j\neq\emptyset\)  let \(A\) denote the transition matrix between the frames \(\{f_1^{j},\ldots,f_r^{j}\}\) and \(\{f_1^{k},\ldots,f_r^{k}\}\) that is
	\[(f_1^{j},\ldots,f_r^{j})=(f_1^{k},\ldots,f_r^{k})A.\]
	It follows from Remark~\ref{Rmk:TransCR} that \(A\) is CR. Furthermore, we find
	\[0=(T^E(f_1^{j}),\ldots,T^E(f_r^{j}))=(T^E(f_1^{k}),\ldots,T^E(f_r^{k}))A+(f_1^{k},\ldots,f_r^{k})TA=(f_1^{k},\ldots,f_r^{k})TA.\]
	Since \(\{f_1^{k},\ldots,f_r^{k}\}\) is a frame we have \(TA=0\), that is the transition matrix is rigid and CR.
\end{proof}

Let $(X, T^{1, 0}X)$, \(\dim X=2n-1\), be a CR manifold of codimension one and CR dimension \(n-1\) with a transversal CR $\mathbb R$-action. Let $T$ be the infinitesimal generator of the $\mathbb R$-action. In this paper we will make systematic use of appropriate coordinates introduced by
Baouendi-Rothschild-Treves~\cite[Theorem II.1, Proposition I.2]{BRT85}. For each point $p\in X$
there exist a coordinate neighborhood $U$ with coordinates $(x_1,\ldots,x_{2n-1})$,
centered at $p=0$, and $\varepsilon>0$, $\varepsilon_0>0$,
such that, by setting $z_j=x_{2j-1}+ix_{2j}$, $j=1,\ldots,n-1$, $x_{2n-1}=\eta$
and $D=\{(z, \eta)\in U: \abs{z}<\varepsilon, |\theta|<\varepsilon_0\}\subset U$, we have
\begin{equation}\label{e-can1}
T=\frac{\partial}{\partial\eta}\:\:\text{on $D$},\\
\end{equation}
and the vector fields
\begin{equation}\label{e-can2}
Z_j=\frac{\partial}{\partial z_j}-i\frac{\partial\phi}{\partial z_j}(z)\frac{\partial}{\partial\eta},
\:\:j=1,\ldots,n-1,
\end{equation}
form a basis of $T_x^{1,0}X$ for each $x\in D$, where $\phi\in C^\infty(D,\mathbb R)$
is independent of $\eta$.
We call $(x_1,\ldots,x_{2n-1})$ canonical coordinates, $D$ canonical coordinate patch and
$(D,(z,\eta),\phi)$ a BRT trivialization. The frames \eqref{e-can2} are called BRT frames. We can also define BRT frames on the bundle $T^{*0,q}X$. We sometime write $(D,x=(x_1,\ldots,x_{2n-1}))$ to denote canonical coordinates.

\begin{example}
Let $X$ be a compact CR manifold  with a  transversal CR
$\mathbb R$-action. Let $T$ be the infinitesimal generator of the $\mathbb R$-action. We study here the bundle $T^{1, 0}X$
by using the canonical BRT coordinates \cite[Theorem II.1, Proposition I.2]{BRT85}. In particular, we will show that the BRT coordinates give rise to a CR structure on $T^{1, 0}X$ and a CR bundle lift of \(T\), such that \(T^{1,0}X\) becomes a rigid CR vector bundle.
Let $(D, (z,\theta),\phi)$ be a BRT trivialization defined in
\eqref{e-can2}. Then on $D$,
\begin{equation}\label{e-can11}
\begin{split}
T&=\frac{\partial}{\partial\theta},\\
Z_j&=\frac{\partial}{\partial z_j}-
i\frac{\partial\phi}{\partial z_j}(z,\overline{z})\frac{\partial}{\partial\theta},
\:\:j=1,\ldots,n-1,
\end{split}
\end{equation}
where $\{Z_j: j=1, \ldots, n-1\}$ is a frame of $T^{1, 0}X$ over $D$.   Let $(\tilde D, (w, \eta), \tilde \phi)$ be another BRT trivialization. Then on $\tilde D$,
\begin{equation}\label{e-can22}
\begin{split}
T&=\frac{\partial}{\partial\eta},\\
\tilde Z_j&=\frac{\partial}{\partial w_j}-
i\frac{\partial\tilde\phi}{\partial w_j}(w,\overline{w})\frac{\partial}{\partial\eta},
\:\:j=1,\ldots, n-1,
\end{split}
\end{equation}
where $\{\tilde Z_j: j=1, \ldots, n-1\}$ is a frame of $T^{1, 0}X$ over
$\tilde D$.
We have on $D\cap\tilde{D}$,
\begin{equation}\label{I.32}
\tilde{Z}_j=\sum_{k=1}^{n-1}c_{j,k}Z_k
\end{equation}
where $c_{j,k}\in C^\infty(D\cap \tilde{D})$ are rigid CR functions. Write $E=T^{1, 0}X$. The local frames give rise to a CR vector bundle structure on \(E\). Then $T$ will admit a natural CR bundle lift $T^E$ on $E$. In fact, for any $f\in  C^\infty(X, E)$ we can write $f=\sum_{j=1}^{n-1} f_j Z_j$ and one can define
\begin{equation}\label{example-2019-05-13a}
T^E f=\sum_{j=1}^{n-1} (Tf_j )Z_j.
\end{equation}
Moreover, since $[T, \overline\partial_b]=0$ then it follows from (\ref{example-2019-05-13a}) that $[T^E, \overline\partial^E_b]=0.$
\end{example}

The goal of our paper is to prove a Kodaira embedding theorem, so to work with very ample line bundles, whose global CR sections give an embedding in the projective space. Such bundles are locally CR trivializable, so we restrict here to CR vector bundles which are locally CR trivializable.
The following lemma can be seen as a variant of Proposition~2.7 in \cite{HLM16} for bundle lifts of the vector field \(T\).

\begin{lemma}\label{Lem:RigidFrame}
	Let $(X, T^{1, 0}X)$ be a CR manifold of codimension one with a transversal CR $\mathbb R$-action. Let $T$ be the infinitesimal generator of the $\mathbb R$-action.
	Let \((E,\pi,X)\) be a locally CR trivializable CR vector bundle of rank \(r=1\). Assume that \(T^E\) is a CR bundle lift of \(T\) to \((E,\pi,X)\). Then \((E,\pi,X)\) is rigid CR. More precisely,  for any \(p\in X\) there exist an open neighborhood \(U\subset X\) around \(p\) and a CR frame \(\{f\}\) of \(E|_U\) with \(T^E(f)=0\).
\end{lemma}

\begin{proof}
	Using Lemma~\ref{Lem:LocCR} we find an open neighborhood \(V\subset X\) around \(p\) and a CR frame \(\{s\}\) of \(E|_V\). Any other smooth frame \(\{f\}\) on \(V\) can be written as \(f=sA\) where \(A\colon V\to \C\setminus\{0\}\) is smooth. Furthermore, we can write \(T^E(s)=sB\) with \(B\colon V\to \C\) smooth. Since \(T\) is non vanishing, we can solve the linear partial differential equation \(T(A)=-BA\) in \(A\) with \(A(p)=1\)  in a small neighborhood \(V'\) of \(p\) with \(A(x)\in \C\setminus\{0\}\) for any \(x\in V'\). Then \(\{f\}\) defined by \(f=sA\) is a frame of \(E|_{V'}\) with
	\(T^E(f)=s(TA+AB)=0\).
	It remains to show that we can find a solution \(A\) such that \(\{f\}\) is a CR frame, that is \(\overline{\partial}_bA=0\). With \([T^E,\overline{\partial}^E_b]=0\) we find \(\overline{\partial}_bB=0\) and since \([T,\overline{\partial}_b]=0\) we have \(T(\overline{\partial}_bA)=B(\overline{\partial}_bA)\). Therefore we have to find a hypersurface \(H\) around \(p\) transversal with respect to \(T\) and initial Data \(A_0\) on \(H\) such that the solution of the transport equation \(T(A)=-BA\) with \(A=A_0\) on \(H\) satisfies \(\overline{\partial}_bA=0\) on \(H\). Then it follows from \(T(\overline{\partial}_bA)=-B(\overline{\partial}_bA)\) that \(\overline{\partial}_bA=0\) holds in an open neighborhood around \(p\). Choose BRT coordinates \(((z,t)\in P\times I,\varphi)\) on an open neighborhood \(U'\) around \(p\) such that \(P\) is an open polydisc in some \(\C^{n-1}\), \(I\subset\R\) an open interval around \(0\) and identify \(U'\) with \(P\times I \) where \(p\) corresponds to \((0,0)\in P\times I\). Set \(H=P\times\{0\}\) and write \(A=A(z,t)\), \(B=B(z,t)\).  If \(A\) is a solution of \(T(A)=-BA\) with \(\overline{\partial}_bA=0\) we must have \(\overline{\partial} A(z,0)=-i(\overline{\partial}\varphi) B(z,0)A(z,0)\) on \(P\) where \(\overline{\partial}=\sum_{j=1}^{n-1}d\overline{z}_j\wedge\frac{\partial}{\partial \overline{z}_j}\). From \(\overline{\partial}_bB=0\) we find \(\overline{\partial}B=i(\overline{\partial}\varphi) TB\) and hence \(\overline{\partial}((\overline{\partial}\varphi(z)) B(z,0))=0\) on \(P\). So let \(g\in C^\infty(P)\) be a smooth solution of \(\overline{\partial} g(z)=-i(\overline{\partial}\varphi) B(z,0)\) with \(g(0)=0\) and set \(A_0(z,t)=\exp(g(z))\). Let \(A\) be the solution of \(T(A)=-BA\) with \(A=A_0\) on \(H\).  By construction we have \(\overline{\partial}_bA=0\) on \(H\) and since \(T(\overline{\partial}_bA)=B(\overline{\partial}_bA)\) we have \(\overline{\partial}_bA=0\) in a neighborhood of \(p\). Since \(A(p)=A(0,0)=\exp(g(0))=1\) we find that \(A\) is non vanishing in a neighborhood \(U\) around \(p\). Then \(f:=sA\) is the desired frame for \(E|_U\).
\end{proof}

\begin{definition}\label{d-gue150514f}
Let $E$ be a rigid vector bundle over $X$. Let $\langle\,\cdot\,|\,\cdot\,\rangle_E$ be a Hermitian metric on $E$. We say that $\langle\,\cdot\,|\,\cdot\,\rangle_E$ is a rigid Hermitian metric if for every local rigid frame $f_1,\ldots, f_r$ of $E$, we have $T\langle\,f_j\,|\,f_k\,\rangle_E=0$, for every $j,k=1,2,\ldots,r$.
\end{definition}


In order to simplify the notation we will denote by $\overline\partial_b, T$ the operators $\overline\partial_b^E, T^E$ where $E$ is any rigid CR vector bundle on $X$.
Let $(X, T^{1, 0}X)$ be a CR manifold of codimension one with a transversal CR $\mathbb R$-action and let $T$ be the infinitesimal generator of the $\mathbb R$-action.
Consider a locally CR trivializable CR line bundle $L$  over $X$ with a CR bundle lift  of \(T\). By Lemma \ref{Lem:RigidFrame} we find that \(L\) is rigid CR with respect to that bundle lift. Hence there exists
an open covering $(U_j)^N_{j=1}$ and a family of rigid CR trivializing frames $\{s_j\}_{j=1}^N$ with each $s_j$ defined on $U_j$ and the transition functions between different rigid CR frames are rigid CR functions.
Let $L^k$ be the $k$-th tensor power of $L$.
Then $\{s_j^{k}\}^N_{j=1}$ is a family of  rigid CR trivializing frames on each $U_j$.
Let
$\overline\partial_b^{L^k}:\Omega^{0,q}(X, L^k)\rightarrow\Omega^{0,q+1}(X, L^k)$ be the tangential Cauchy-Riemann operator.
Since $L^k$ is rigid  CR we have $\overline\partial_b f=\overline\partial_b f_j\otimes s_j^k$, $Tf=(Tf_j)\otimes s_j^k$ for any $f=f_j\otimes s_j^k\in\Omega^{0, q}(X, L^k)$ and
\begin{equation}\label{e-gue150508d}
T\ddbar_b=\ddbar_bT\ \ \mbox{on $\Omega^{0,q}(X,L^k)$}.
\end{equation}

Let $h^L$ be a Hermitian fiber metric on $L$. The local weight of $h^L$
with respect to a local rigid CR trivializing section $s$ of $L^L$ over an open subset $D\subset X$
is the function $\Phi\in C^\infty(D, \mathbb R)$ for which
\begin{equation}\label{e-gue150808g}
|s(x)|^2_{h^L}=e^{-2\Phi(x)}, x\in D.
\end{equation}
We denote by $\Phi_j$ the weight of $h^L$ with respect to $s_j$.

\begin{definition}\label{d-gue150808g}
Let $L$ be a rigid CR line bundle and let $h^L$ be a Hermitian metric on $L$.
The curvature of $(L,h^L)$ is the the Hermitian quadratic form $R^L=R^{(L,h^L)}$ on $T^{1,0}X$
defined by
\begin{equation}\label{e-gue150808w}
R_p^L(U, V)=\,\big\langle d(\overline\partial_b\Phi_j-\partial_b\Phi_j)(p),
U\wedge\overline V\,\big\rangle,\:\: U, V\in T_p^{1,0}X,\:\: p\in U_j.
\end{equation}
\end{definition}

Due to \cite[Proposition 4.2]{HM09}, $R^L$ is a well-defined global Hermitian form,
since the transition functions between different frames $s_j$ are annihilated by $T$.

\begin{definition}\label{d-gue150808gI}
We say that $(L,h^L)$ is positive if there is an interval $I\subset\Real$ such that the associated curvature $R^L_x-2s\mathcal{L}_x$ is positive definite
at every $x\in X$, for every $s\in I$.
\end{definition}


\section{The relation between $\Real$-action and torus action on CR manifolds} \label{s-gue170810}

In this section we state and proof our main results about \(\Real\)-actions on a CR manifold \(X\) (see Theorem \ref{thm:mainthm}). It turns out that if the \(\Real\)-action is CR transversal and  \(X\) is either strongly pseudoconvex or admits a rigid positive CR line bundle there are only two cases which need to be considered. In particular, we find out that  the \(\Real\)-action does not come from a CR torus action if there exists an orbit which is a closed but non-compact subset of \(X\) and in that case all the orbits have this property (see Corollary \ref{cor:psc} and \ref{c-gue170811}). If \(X\) is in addition compact it is easy to see that the \(\Real\)-action is always induced by  a CR torus action.
\subsection{Some facts in Riemannian geometry}\label{s-gue170810I}

Let $(X, g)$ be a connected Riemannian manifold with metric $g$ and denote by  \(\operatorname{Iso}(X,g)\) the group of isometries from \((X,g)\) onto itself, that is \(F\in \operatorname{Iso}(X,g)\) if and only if \(F\) is a \(C^\infty\)-Diffeomorphism and \(F^*g=g\).  A Lie group is always assumed to be finite dimensional. The following result is well-known (see~\cite{MS39}).

\begin{theorem}\label{thm:steenrod}
	We have that \(\operatorname{Iso}(X,g)\) is a Lie transformation group acting on \(X\). More precisely, \(\operatorname{Iso}(X,g)\) together with the composition of maps carries the structure of a Lie group such that the map
	\begin{align*}
	\operatorname{Iso}(X,g)\times X \ni (F,x)\mapsto F(x)\in X
	\end{align*}
	is of class \(C^1\).\\
	Furthermore, assuming that \(X\) is compact it follows that  \(\operatorname{Iso}(X,g)\) is compact too.
\end{theorem}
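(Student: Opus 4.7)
The plan is to follow the classical argument of Myers--Steenrod. The starting point is the observation that an isometry $F\in\operatorname{Iso}(X,g)$ is uniquely determined by the pair $(F(p),dF_p)$ at any single point $p\in X$: indeed, isometries send geodesics to geodesics and preserve affine parametrization, so $F$ is determined on the image of $\exp_p$, and connectedness of $X$ (together with a chain-of-normal-neighborhoods argument) propagates this to all of $X$. This provides an injective map
\[\Psi_p:\operatorname{Iso}(X,g)\To O(X),\qquad F\longmapsto (F(p),dF_p),\]
into the orthonormal frame bundle $O(X)$, which is a smooth finite-dimensional manifold. The bulk of the proof consists in showing that the image is a closed embedded submanifold and that the resulting smooth structure makes the action on $X$ of class $C^1$ (in fact $C^\infty$).

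For the topological/group structure, I would equip $\operatorname{Iso}(X,g)$ with the compact-open topology. The isometries, viewed as self-maps of the metric space $(X,d_g)$ where $d_g$ is the Riemannian distance, form an equicontinuous family (each is $1$-Lipschitz). By Arzel\`a--Ascoli, every sequence $(F_n)\subset\operatorname{Iso}(X,g)$ with $(F_n(p))$ bounded admits a subsequence converging uniformly on compact sets to a distance-preserving map $F:X\to X$. The main technical input is the \emph{regularity theorem} that any distance-preserving surjection between connected Riemannian manifolds is a smooth isometry; this is proved by showing such a map sends geodesics to geodesics, hence admits a smooth expression in normal coordinates. Combined with the injective map $\Psi_p$, this gives local compactness of $\operatorname{Iso}(X,g)$.

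To upgrade local compactness to a Lie group structure, one then invokes either the Bochner--Montgomery theorem (a locally compact topological group acting effectively and continuously by diffeomorphisms on a smooth manifold is a Lie group, and the action is smooth), or one argues directly: the image $\Psi_p(\operatorname{Iso}(X,g))\subset O(X)$ can be shown to be a closed submanifold by constructing local charts using the exponential map and the fact that an isometry is determined by its $1$-jet. The group operations and evaluation map inherit $C^\infty$ regularity from the regularity theorem above; in particular the evaluation map $(F,x)\mapsto F(x)$ is $C^1$ as claimed.

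For the final compactness assertion when $X$ is compact: $\Psi_p(\operatorname{Iso}(X,g))$ lies in the compact fiber-bundle $O(X)$ over the compact $X$, and it is closed by the Arzel\`a--Ascoli/regularity argument (limits of isometries are isometries), hence compact; transferring back via $\Psi_p^{-1}$ gives compactness of $\operatorname{Iso}(X,g)$. The principal obstacle throughout is the smoothness of distance-preserving maps, which is the nontrivial analytic input; everything else reduces to Arzel\`a--Ascoli together with standard frame-bundle bookkeeping.
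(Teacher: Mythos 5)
Your outline is a correct sketch of the classical Myers--Steenrod argument (injectivity of $F\mapsto(F(p),dF_p)$ into the orthonormal frame bundle, Arzel\`a--Ascoli in the compact-open topology, the regularity theorem for distance-preserving maps, and Bochner--Montgomery or a direct frame-bundle chart construction), which is precisely the route the paper relies on: the paper offers no proof of its own and simply cites \cite{MS39}. So your approach coincides with the paper's, up to the standard details you correctly identify as the technical core (smoothness of distance-preserving maps, and closedness of the image of $\Psi_p$ so that limits of isometries are isometries).
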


\begin{lemma}\label{lem:steenrod}
	In the situation of Theorem \ref{thm:steenrod}  we have that for every \(v\in \C TX\) the map \(Q_v\colon\operatorname{Iso}(X,g)\rightarrow \C TX \), \(Q_v(F)=dF_{\pi(v)}v\) is continuous. Here \(\pi:\C TX\rightarrow X\) denotes the standard projection and all fibrewise linear maps on \(TX\) are extended \(\C\)-linearly to \(\C TX\).
\end{lemma}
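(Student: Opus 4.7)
The plan is to deduce continuity of \(Q_v\) from the \(C^1\)-regularity of the evaluation map \(E\colon\operatorname{Iso}(X,g)\times X\rightarrow X\), \(E(F,x):=F(x)\), provided by Theorem~\ref{thm:steenrod}. Since the assignment \(v\mapsto Q_v(F)\) is by construction the \(\C\)-linear extension of \(dF_{\pi(v)}\), it is enough to check continuity of \(Q_u\) for a real tangent vector \(u\in T_{x_0}X\), where \(x_0:=\pi(v)\); for a general \(v=u+iw\in\C T_{x_0}X\) one then writes \(Q_v=Q_u+iQ_w\) and concludes.

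First I would choose a smooth curve \(\gamma\colon(-\varepsilon,\varepsilon)\rightarrow X\) with \(\gamma(0)=x_0\) and \(\dot\gamma(0)=u\). By the chain rule,
\[Q_u(F)=dF_{x_0}u=\frac{d}{dt}\bigg|_{t=0}E(F,\gamma(t)).\]
The composed map \(H\colon\operatorname{Iso}(X,g)\times(-\varepsilon,\varepsilon)\rightarrow X\), \((F,t)\mapsto E(F,\gamma(t))\), is \(C^1\) as a composition of the \(C^1\) map \(E\) with the smooth map \((F,t)\mapsto(F,\gamma(t))\). Hence its partial derivative in \(t\) defines a continuous \(TX\)-valued map on \(\operatorname{Iso}(X,g)\times(-\varepsilon,\varepsilon)\); restricting to \(t=0\) yields continuity of \(F\mapsto Q_u(F)\in TX\), which is precisely what the lemma asserts.

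The only delicate step is verifying that \(C^1\)-regularity on the product manifold \(\operatorname{Iso}(X,g)\times X\) genuinely yields continuity of the partial differential \(F\mapsto dF_{x_0}\) as a linear map between the corresponding tangent spaces. Working in local charts around a fixed \(F_0\in\operatorname{Iso}(X,g)\), around \(x_0\), and around \(F_0(x_0)\), the Jacobian of the coordinate representation of \(E(F,\cdot)\) at \(x_0\) is, up to chart identifications, the coordinate matrix of \(dF_{x_0}\); the \(C^1\)-hypothesis on \(E\) forces the entries of this Jacobian to depend continuously on \(F\) in a neighborhood of \(F_0\). I expect this unpacking of the \(C^1\)-regularity on a product to be the main, though mild, obstacle; once it is settled, the preceding chain-rule argument delivers the conclusion immediately.
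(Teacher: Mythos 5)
Your argument is correct, but it takes a different route from the paper: the paper disposes of this lemma in one line by citing Lemma 7 of Myers--Steenrod \cite{MS39}, which directly asserts that convergence of a sequence of isometries entails convergence of their differentials applied to any fixed tangent vector (and since $\operatorname{Iso}(X,g)$ is metrizable this is exactly the continuity of $Q_v$). You instead derive the statement from the $C^1$-regularity of the evaluation map $E(F,x)=F(x)$ that is already recorded in Theorem \ref{thm:steenrod}: pulling $E$ back along a curve $\gamma$ with $\dot\gamma(0)=u$ and observing that the $t$-partial derivative of a $C^1$ map on a product depends continuously on all variables gives continuity of $F\mapsto dF_{x_0}u$, and the complex case follows by writing $v=u+iw$ and using that fibrewise addition and multiplication by $i$ define a continuous bundle map $TX\oplus TX\to\C TX$. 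The chart-level justification you flag as the ``delicate step'' is indeed the whole content and is handled correctly: in coordinates the $t$-derivative of $E(F,\gamma(t))$ at $t=0$ is a column of the Jacobian of $E(F,\cdot)$, whose entries are continuous in $F$ precisely because $E$ is $C^1$. What your approach buys is a proof that is self-contained given the theorem as quoted in the paper, rather than requiring the reader to consult the specific lemma in the 1939 reference; what the paper's approach buys is brevity. Both ultimately rest on the same regularity input from Myers--Steenrod.
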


\begin{proof}
	The proof follows immediately from Lemma 7 in~\cite{MS39}.
\end{proof}

\begin{lemma}\label{lem:propermap}
	The map \(\operatorname{Iso}(X,g)\times X \ni (F,x)\mapsto (x,F(x))\in X\times X\) is proper.
\end{lemma}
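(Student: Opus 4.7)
The plan is to verify properness directly by sequential compactness of preimages. Fix a compact set $K\subset X\times X$, let $A=\pi_1(K)$ and $B=\pi_2(K)$ (compact subsets of $X$), and set $L=\{(F,x)\in\operatorname{Iso}(X,g)\times X:(x,F(x))\in K\}$. Continuity of the action furnished by Theorem~\ref{thm:steenrod} makes $L$ closed and forces $L\subset\operatorname{Iso}(X,g)\times A$. Given a sequence $(F_n,x_n)\in L$, I would first extract a subsequence with $x_n\to x_\infty\in A$ and $F_n(x_n)\to y_\infty\in B$. Denoting by $d$ the Riemannian distance, the isometry property yields $d(F_n(x_\infty),F_n(x_n))=d(x_\infty,x_n)\to 0$, hence $F_n(x_\infty)\to y_\infty$, and the symmetric computation gives $F_n^{-1}(y_\infty)\to x_\infty$.

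The heart of the argument is then an Arzela--Ascoli extraction applied to the families $\{F_n\}$ and $\{F_n^{-1}\}$ viewed as continuous maps $X\to X$. Equicontinuity is automatic because each isometry is $1$-Lipschitz for $d$. Pointwise precompactness holds at $x_\infty$ and $y_\infty$ by the previous step, and for any other $p\in X$ the identity $d(F_n(p),F_n(x_\infty))=d(p,x_\infty)$ confines $\{F_n(p)\}$ to a fixed closed $d$-ball around $y_\infty$, which is relatively compact once one appeals to Hopf--Rinow (in the applications of interest $X$ will be compact, hence geodesically complete; otherwise one works locally in geodesically convex neighbourhoods and bootstraps via the connectedness of $X$). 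A diagonal extraction over an exhaustion of $X$ by compact sets then produces a subsequence, still written $(F_n)$, along which $F_n\to F$ and $F_n^{-1}\to G$ uniformly on compact subsets of $X$, with $F,G\colon X\to X$ continuous and $1$-Lipschitz.

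Passing to the limit in $F_n\circ F_n^{-1}=F_n^{-1}\circ F_n=\mathrm{id}_X$ gives $F\circ G=G\circ F=\mathrm{id}_X$, so $F$ is a $1$-Lipschitz bijection with $1$-Lipschitz inverse, which forces $F$ to be distance-preserving; hence $F\in\operatorname{Iso}(X,g)$ with $F(x_\infty)=y_\infty$. Because the Lie group topology on $\operatorname{Iso}(X,g)$ provided by Theorem~\ref{thm:steenrod} coincides with the compact-open topology (the content of Myers--Steenrod), the uniform-on-compacts convergence $F_n\to F$ is convergence in $\operatorname{Iso}(X,g)$, and $(F_n,x_n)\to(F,x_\infty)\in L$. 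The main obstacle I anticipate is the technical step of pointwise precompactness of $\{F_n(p)\}$ at all $p\in X$ without a standing completeness hypothesis, which is why the argument should either be run under the implicit assumption (valid in the applications) that $X$ is geodesically complete, or localised using connectedness of $X$ together with the local compactness of Riemannian manifolds.
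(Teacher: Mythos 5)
The paper does not prove this lemma at all --- it simply cites Satz 2.22 of \cite{Sch08} (the statement is the classical van Dantzig--van der Waerden / Myers--Steenrod properness of the isometry group of a connected, locally compact metric space) --- so your argument is necessarily a different, self-contained route. Your scheme is the standard one, and every step is sound \emph{when $(X,d)$ is complete}: the transfer of convergence from $x_n$ to $x_\infty$ via $d(F_n(x_\infty),F_n(x_n))=d(x_\infty,x_n)$, the Arzel\`a--Ascoli extraction, the identification of the limit as a surjective distance-preserving map (hence, by Myers--Steenrod, a smooth isometry), and the identification of compact-open convergence with convergence in the Lie group topology are all correct.

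The genuine gap is the one you yourself flag and then defer: pointwise precompactness of $\{F_n(p)\}$ for $p$ far from $x_\infty$ fails in general without completeness, and the lemma is stated --- and used --- in the paper for an arbitrary connected Riemannian manifold. Theorem \ref{thm:mainthm} and Corollaries \ref{cor:psc}, \ref{c-gue170811} explicitly drop compactness, and case 1 of Theorem \ref{thm:mainthm} (all orbits closed and non-compact) is precisely the non-compact regime, so you cannot retreat to ``in the applications $X$ is compact.'' Moreover the proposed fix (``work locally in geodesically convex neighbourhoods and bootstrap via connectedness'') is not as routine as you suggest: the standard way to close it is the van Dantzig--van der Waerden device of introducing $r(p)=\sup\{r>0:\overline{B(p,r)}\ \text{is compact}\}$, noting that $r$ is $1$-Lipschitz and invariant under isometries, and using it to show that the set $S=\{p\in X:\ \{F_n(p)\}_n\ \text{is relatively compact}\}$ is both open and closed (openness from local compactness; closedness because for $q\in S$ with $d(p,q)<r(p)/4$ one has $r\geq 3r(p)/4$ on the compact set $\overline{\{F_n(q)\}}$, whence the ball $B(q,3r(p)/8)\ni p$ lies in $S$). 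With $S=X$ by connectedness, your Arzel\`a--Ascoli step goes through verbatim and the proof is complete in the generality the paper requires. As written, though, the argument only establishes the lemma under an unstated completeness hypothesis.
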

\begin{proof}
	see Satz 2.22 in~\cite{Sch08}.
\end{proof}

\subsection{Application to CR geometry}\label{s-gue170810II}

Let \((X,T^{1,0}X)\) be a connected CR manifold and denote by \(\operatorname{Iso}(X,g)\) the group of isometries on \(X\) with respect to some Riemannian metric \(g\).  Let \(\operatorname{Aut}_{\operatorname{CR}}(X)\) be the group of CR automorphisms on \(X\), that is  \(F\in\operatorname{Aut}_{\operatorname{CR}}(X)\) if and only if \(F\colon X\rightarrow X\) is a \(C^{\infty}\)-Diffeomorphism satisfying \(dF(T^{1,0}X)\subset T^{1,0}X\).
\begin{lemma}
 We have that \(\operatorname{Iso}(X,g)\cap \operatorname{Aut}_{\operatorname{CR}}(X)\) is a Lie group. Furthermore, assuming that \(X\) is compact implies that \(\operatorname{Iso}(X,g)\cap \operatorname{Aut}_{\operatorname{CR}}(X)\) is a compact Lie group.
\end{lemma}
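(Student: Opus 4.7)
The plan is to exhibit $\operatorname{Iso}(X,g)\cap\operatorname{Aut}_{\operatorname{CR}}(X)$ as a closed subgroup of the Lie group $\operatorname{Iso}(X,g)$ and then invoke the closed subgroup theorem (Cartan's theorem), which yields the Lie group structure. The group structure itself is immediate, since the intersection of two subgroups of the diffeomorphism group is a subgroup.

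First I would verify closedness. Let $(F_n)_{n\in\N}$ be a sequence in $\operatorname{Iso}(X,g)\cap\operatorname{Aut}_{\operatorname{CR}}(X)$ converging in $\operatorname{Iso}(X,g)$ to some $F$. To show $F\in\operatorname{Aut}_{\operatorname{CR}}(X)$ it suffices to check that $dF_x(T^{1,0}_xX)\subset T^{1,0}_{F(x)}X$ for every $x\in X$. Fix $x\in X$ and $v\in T^{1,0}_xX$. By Lemma \ref{lem:steenrod}, the map $Q_v\colon\operatorname{Iso}(X,g)\to\C TX$, $H\mapsto dH_xv$, is continuous, so $dF_{n,x}v\to dF_xv$ in $\C TX$. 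Since $F_n\in\operatorname{Aut}_{\operatorname{CR}}(X)$, the vector $dF_{n,x}v$ lies in $T^{1,0}_{F_n(x)}X$, and consequently in the subset $T^{1,0}X\subset\C TX$. Because $T^{1,0}X$ is a smooth subbundle of $\C TX$, it is a closed subset of the total space $\C TX$, and the limit $dF_xv$ therefore belongs to $T^{1,0}X$. Applying the bundle projection shows $dF_xv\in T^{1,0}_{F(x)}X$, so $F\in\operatorname{Aut}_{\operatorname{CR}}(X)$ and the intersection is closed.

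By Theorem \ref{thm:steenrod}, $\operatorname{Iso}(X,g)$ is a Lie group, and Cartan's closed subgroup theorem implies that any closed subgroup of a Lie group is an embedded Lie subgroup; this gives the Lie group structure on $\operatorname{Iso}(X,g)\cap\operatorname{Aut}_{\operatorname{CR}}(X)$. If $X$ is compact, the second part of Theorem \ref{thm:steenrod} says $\operatorname{Iso}(X,g)$ is compact, and a closed subgroup of a compact Lie group is compact, giving the final claim.

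The main technical point is the closedness step; the subtle part is that the base points $F_n(x)$ drift with $n$, so one cannot argue fiberwise. The argument above handles this by viewing $T^{1,0}X$ as a closed subset of the total space $\C TX$ and exploiting the continuity statement in Lemma \ref{lem:steenrod} (whose proof in~\cite{MS39} gives continuity of $(F,v)\mapsto dF_{\pi(v)}v$ on $\operatorname{Iso}(X,g)\times\C TX$ to $\C TX$), which is precisely what makes this limiting argument valid despite the moving base point.
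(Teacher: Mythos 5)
Your proposal is correct and follows essentially the same route as the paper: both reduce the claim to showing that the intersection is a topologically closed subgroup of $\operatorname{Iso}(X,g)$ using the continuity of $Q_v$ from Lemma \ref{lem:steenrod} together with the closedness of $T^{1,0}X$ in $\C TX$, and then apply Theorem \ref{thm:steenrod} and Cartan's closed subgroup theorem. The only cosmetic difference is that you verify closedness by a sequential limit argument while the paper writes the same set as $\bigcap_{v\in T^{1,0}X}Q_v^{-1}(T^{1,0}X)$, an intersection of preimages of a closed set under continuous maps.
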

\begin{proof}
	Obviously, \(\operatorname{Iso}(X,g)\cap \operatorname{Aut}_{\operatorname{CR}}(X)\) is a subgroup of \(\operatorname{Iso}(X,g)\). We only need to show that \(\operatorname{Iso}(X,g)\cap \operatorname{Aut}_{\operatorname{CR}}(X)\) is a topologically closed subset of \(\operatorname{Iso}(X,g)\). Then, by Theorem~\ref{thm:steenrod}, Cartan's closed subgroup theorem and the fact that a closed subset of a compact set is again compact, the result follows. Recall that Cartan's closed subgroup theorem states that if $H$ is a closed subgroup of a Lie group $G$, then $H$ is an embedded Lie group with the relative topology being the same as the group topology.\\
	We have that \(T^{1,0}X\) is a closed subset of \(\C TX\). Then by Lemma \ref{lem:steenrod} we have that for every \(v\in T^{1,0}X\) the set \(Q_v^{-1}(T^{1,0}X)\) is a closed subset of \(\operatorname{Iso}(X,g)\) and hence \(H:=\bigcap_{v\in T^{1,0}X}Q_v^{-1}(T^{1,0}X)\) is a closed subset of \(\operatorname{Iso}(X,g)\). Moreover, by definition a \(C^\infty\)-Diffeomorphism \(F\colon X\rightarrow X\) is CR if and only if  \(dF_{\pi(v)}v\in T^{1,0}X\) holds for all \(v\in T^{1,0}X\). Hence, \(H=\operatorname{Iso}(X,g)\cap \operatorname{Aut}_{\operatorname{CR}}(X)\) which proofs the claim.
\end{proof}

Now assume that \((X,T^{1,0}X)\) is equipped with a CR \(\R\)-action, i.e.~a Lie group homomorphism \(\gamma:\R\rightarrow \operatorname{Aut}_{\operatorname{CR}}(X)\).
\begin{theorem}\label{thm:mainthm}
	Let  \((X,T^{1,0}X)\) be a connected CR manifold equipped with a CR \(\R\)-action. Assume that there exists a Riemannian metric \(g\) on \(X\), such that the \(\R\)-action acts by isometries with respect to this metric. Then exactly one of the following two cases will appear:
	\begin{itemize}
		\item[case 1:] All orbits are closed subsets and non compact.
		\item[case 2:] \(\overline{\gamma(\R)}\) is a torus in \(\operatorname{Iso}(X,g)\cap \operatorname{Aut}_{\operatorname{CR}}(X)\). In other words, the \(\R\)-action comes from a CR torus action.
	\end{itemize}
	Here \(\overline{\gamma(\R)}\) is the closure of \(\gamma(\R)\) taken in \(\operatorname{Iso}(X,g)\cap \operatorname{Aut}_{\operatorname{CR}}(X)\).
\end{theorem}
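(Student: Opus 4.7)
Let $G := \operatorname{Iso}(X,g) \cap \operatorname{Aut}_{\operatorname{CR}}(X)$, which is a Lie group by the preceding lemma; the smooth $\mathbb{R}$-action by isometric CR automorphisms yields a continuous one-parameter subgroup $\gamma\colon \mathbb{R} \to G$. Put $H := \overline{\gamma(\mathbb{R})} \subset G$. As the closure of a connected abelian subgroup, $H$ is itself a closed, connected, abelian Lie subgroup (Cartan's closed subgroup theorem), so by the structure theorem for connected abelian Lie groups one has $H \cong T^a \times \mathbb{R}^b$ for some $a,b \geq 0$. My strategy is to establish the dichotomy: either $H$ is compact---so $H$ is a torus and we land in Case 2---or $H$ is non-compact, in which case every orbit $\gamma(\mathbb{R})\cdot x$ is a closed non-compact subset of $X$, giving Case 1.

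I reduce the non-compact case to the sharper claim $H \cong \mathbb{R}$ by an elementary structural argument. Project $\gamma$ onto the $\mathbb{R}^b$-factor; its image is a connected one-parameter subgroup of $\mathbb{R}^b$ (either $\{0\}$ or a line), and density of $\gamma(\mathbb{R})$ in $H$ forces this projection to have dense image, hence $b \leq 1$. If $b = 0$ we are in the compact case. If $b = 1$, write $\gamma(t) = (\alpha(t), tw)$ with $\alpha\colon\mathbb{R} \to T^a$ a one-parameter subgroup and $w \neq 0$; then for any $(\theta,s)$ in the closure, $t_n w \to s$ forces $t_n \to s/w$, whence $\theta = \alpha(s/w)$. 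Thus $\overline{\gamma(\mathbb{R})} = \{(\alpha(s/w),s) : s \in \mathbb{R}\} = \gamma(\mathbb{R})$ is already closed and $1$-dimensional in $T^a \times \mathbb{R}$, and comparing with $\dim H = a+1$ gives $a = 0$ and $H = \mathbb{R}$. In particular $\gamma$ is injective and a homeomorphism onto the closed subgroup $H$.

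The passage from $H$ to orbit behaviour rests on the properness of the map $(F,z)\mapsto (z,F(z))$ from Lemma \ref{lem:propermap}. First, if some orbit closure $\overline{\gamma(\mathbb{R})\cdot x_0}$ were compact in $X$, then applying properness to the compact set $\{x_0\}\times \overline{\gamma(\mathbb{R})\cdot x_0}$ would show $\gamma(\mathbb{R}) \subset G$ is relatively compact, so $H$ is compact; contrapositively, when $H$ is non-compact, every orbit has non-compact closure and in particular no orbit is compact. Second, to prove closedness of an orbit in this case, suppose $\gamma(t_n)\cdot x \to y \notin \gamma(\mathbb{R})\cdot x$; a bounded subsequence of $(t_n)$ would place $y$ in the orbit by continuity, so $|t_n|\to\infty$. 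Properness yields a convergent subsequence of $\gamma(t_n)$ in $G$; since $\gamma(\mathbb{R})=H$ is closed in $G$ and $\gamma$ is a homeomorphism onto its image, a subsequence of $t_n$ must converge in $\mathbb{R}$, contradicting $|t_n|\to\infty$. The main obstacle I anticipate is the structural step that identifies $H$ with $\mathbb{R}$ in the non-compact case; after that, the orbit-level conclusions follow cleanly from Cartan's theorem and the properness lemma.
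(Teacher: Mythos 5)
Your proposal is correct and follows essentially the same route as the paper: both identify $\overline{\gamma(\mathbb R)}$ as a connected abelian Lie subgroup $\cong V\times\mathcal T$, show that a nontrivial vector factor forces $\gamma(\mathbb R)$ to be closed and one-dimensional (your $t_nw\to s\Rightarrow t_n\to s/w$ step is the same mechanism as the paper's escape-from-compact-neighborhoods argument), and invoke the properness of $(F,x)\mapsto(x,F(x))$ to get closed non-compact orbits. The only differences are organizational: you split on compactness of the closure rather than on whether $\gamma(\mathbb R)$ is already closed, and your structure-theorem argument absorbs the non-injective case that the paper treats separately at the outset.
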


\begin{remark}
	Note that we neither assume that the \(\R\)-action is transversal or locally free nor that the manifold is compact. However, if we additionally assume that \(X\) is compact, we find that the first case cannot appear and hence the \(\R\)-action is induced by a CR torus action.
\end{remark}

\begin{proof}[Proof of Theorem \ref{thm:mainthm}.]
	If \(\gamma\) fails to be injective, we find that the \(\R\)-action is either constant or reduces to an \(S^1\)-action. In both cases there is nothing to show. So let us assume that \(\gamma\) is injective.

We have that \(\operatorname{Iso}(X,g)\cap \operatorname{Aut}_{\operatorname{CR}}(X)\) is a Lie group and that  \(\overline{\gamma(\R)}\) is a topologically closed, abelian subgroup. Hence, \(\overline{\gamma(\R)}\) is an abelian Lie group and thus can be identified with \(V\times\mathcal{T}\), where \(V\) is a finite dimensional real vector space and \(\mathcal{T}\) is some torus.

In the case \(\overline{\gamma(\R)}=\gamma(\R)\) we find by dimensional reasons and because \(\gamma\) is injective that \(\mathcal{T}=\{\operatorname{id}\}\) and \( V\simeq \R\) holds. Take a point \(p\in X\) and consider the map \(\tilde{\gamma}\colon\R\rightarrow X\), \(t\mapsto \gamma(t)(p)\). Since \(\gamma(\R)\) is a closed subset of \(\operatorname{Iso}(X,g)\cap \operatorname{Aut}_{\operatorname{CR}}(X)\) which is closed in \(\operatorname{Iso}(X,g)\) and by Lemma \ref{lem:propermap} we have that \(\tilde{\gamma}\) is proper. Furthermore, \(\tilde{\gamma}\) is injective, because otherwise it would be periodic or constant what contradicts the properness. Summing up we have that \(\tilde{\gamma}\) is continuous, injective and proper and hence it is an embedding. Since \(p\in X\) was chosen arbitrary, case 1 follows.\\
	Given the case \(\overline{\gamma(\R)}\neq\gamma(\R)\) we will show that \(V=\{0\}\) holds. Denote the action \(\mathcal{T}\curvearrowright X\) by $(e^{i\theta_1},\ldots,e^{i\theta_d})$. It is not difficult to see that the $\Real$-action $\gamma$ is given by
\[t\mapsto (tv, e^{i\alpha_1t},\ldots,e^{i\alpha_dt}),\ \ \mbox{for some $v\in V$ and $(\alpha_1,\ldots,\alpha_d)\in\Real^d$}\]
and hence the projection of \(\gamma\) onto \(V\) is of the form \(t\mapsto tv \), for some \(v\in V\). First consider the case \(v\neq 0\). For \(h=(w,\lambda)\in \overline{\gamma(\R)}=V\times \mathcal{T}\) choose an open neighbourhood \(U\) in \(V\times\mathcal{T}\) around \(h\) with compact closure. We find that there exists \(t_0>0\) such that \(\gamma(t) \notin \overline{U}\) for \(|t|>t_0\). So we have \(h\in\overline{\gamma([-t_0,t_0])}\). Since \(\gamma([-t_0,t_0])\) is compact we conclude \(h\in\gamma([-t_0,t_0])\). Thus,  \(\overline{\gamma(\R)}\neq\gamma(\R)\) implies \(v=0\) which leads to \(V=\{0\}\) since  \(\gamma(\R)\) has to be dense in \(V\times\mathcal{T}\).
\end{proof}

\begin{example}
	Consider \(X=\C_z\times\R_s\), \(T^{1,0}X=\C\cdot (\frac{\partial}{\partial z}+i\frac{\partial \varphi}{\partial z}(z)\frac{\partial}{\partial s})\) for some function \(\varphi\in C^\infty(\C)\). Then \((X,T^{1,0}X)\) is a CR manifold and \((t,(z,s))\mapsto (z,s+t)\) defines a transversal CR \(\R\)-action. We observe that this action is not induced by a CR torus action.
\end{example}

\begin{corollary}\label{cor:psc}
	Let \((X,T^{1,0}X)\) be a connected strongly pseudoconvex CR manifold equipped with a transversal CR \(\R\)-action. We have that the \(\R\)-action comes from a CR torus action if and only if at least one of the following conditions is satisfied:
	\begin{itemize}
		\item[a)] there exists an orbit which is a non-closed subset of \(X\),
		\item[b)] there exists an orbit which is compact.
	\end{itemize}
\end{corollary}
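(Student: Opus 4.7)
The plan is to reduce the corollary to Theorem~\ref{thm:mainthm} by constructing an $\R$-invariant Riemannian metric on $X$ from the strongly pseudoconvex structure, and then unpacking what the two cases of that theorem say about orbits. Since the $\R$-action is CR, transversal and $X$ is strongly pseudoconvex, the Levi form $\mathcal{L}$ is a positive definite Hermitian quadratic form on $T^{1,0}X$, and the generator $T$ satisfies $\C T\oplus T^{1,0}X\oplus T^{0,1}X=\C TX$. The $\R$-action preserves $T^{1,0}X$ (by the CR condition) and $T$ (as its infinitesimal generator), hence also preserves the global one-form $\omega_0$ characterized by \eqref{e-gue150808I} and therefore preserves $d\omega_0$ and the Levi form itself. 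I would define the Hermitian extension on $\C TX$ by using $\mathcal{L}$ on $T^{1,0}X$, extending by conjugation to $T^{0,1}X$, declaring $T^{1,0}X\perp T^{0,1}X$, and setting $T\perp T^{1,0}X\oplus T^{0,1}X$ with $\langle T,T\rangle=1$; taking the real part restricted to $TX$ yields an $\R$-invariant Riemannian metric $g$ on $X$.

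With $g$ in hand, Theorem~\ref{thm:mainthm} provides the dichotomy: either (case 1) every orbit of the $\R$-action is closed and non-compact in $X$, or (case 2) the closure $K:=\overline{\gamma(\R)}$ inside $\operatorname{Iso}(X,g)\cap\operatorname{Aut}_{\operatorname{CR}}(X)$ is a torus, i.e., the $\R$-action is induced by a CR torus action. The reverse implication of the corollary is then immediate: if either (a) or (b) holds we cannot be in case 1, so we must be in case 2, and the action comes from a CR torus action.

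For the forward implication, suppose the action comes from a CR torus action, so $K=\overline{\gamma(\R)}$ is a compact torus. Fix any $p\in X$; by Theorem~\ref{thm:steenrod} the evaluation map $\operatorname{ev}_p\colon \operatorname{Iso}(X,g)\to X$, $F\mapsto F(p)$, is continuous, and therefore $K\cdot p=\operatorname{ev}_p(K)$ is a compact subset of $X$ containing $\gamma(\R)\cdot p$ in which the latter is dense. Thus $\overline{\gamma(\R)\cdot p}=K\cdot p$, and exactly one of two things happens: either $\gamma(\R)\cdot p=K\cdot p$, in which case the orbit through $p$ is compact and condition (b) holds, or $\gamma(\R)\cdot p\subsetneq K\cdot p$, in which case the orbit through $p$ is a proper subset of its closure and condition (a) holds.

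The step I expect to carry the most weight is the construction of the $\R$-invariant metric from the strongly pseudoconvex assumption; in particular, the verification that $\omega_0$ (and hence $\mathcal{L}$) is $\R$-invariant needs to be made careful, since it is what converts the CR and transversality hypotheses into the Riemannian hypothesis required by Theorem~\ref{thm:mainthm}. Once this is set up, the rest of the argument is a formal consequence of Theorem~\ref{thm:mainthm} combined with the continuity of the evaluation map, as sketched above.
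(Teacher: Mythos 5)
Your proposal is correct and follows essentially the same route as the paper: build an $\R$-invariant Riemannian metric from $\omega_0\otimes\omega_0$ plus the Levi form (using transversality for the splitting and strong pseudoconvexity for positivity), then invoke the dichotomy of Theorem~\ref{thm:mainthm}. The only differences are cosmetic — you verify invariance via the invariance of $\omega_0$ rather than via BRT coordinates, and you spell out the forward direction of the equivalence, which the paper leaves implicit.
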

\begin{proof}
	Using Theorem \ref{thm:mainthm} we just need to construct an \(\R\)-invariant Riemannian metric on \(X\). Let \(T\) denote the vector field induced by the \(\R\)-action. Since the action is transversal we have
	\[\C TX=\C T\oplus T^{1,0}X\oplus T^{0,1} X.\]
	Let \(P,P^{1,0},P^{0,1}\) be the projections which belong to the decomposition above and denote by \(\omega_0\) the real one form which satisfies \(\omega_0(T)=-1\) and \(\omega_0(T^{1,0}X\oplus T^{0,1} X)=0\). Since \((X,T^{1,0}X)\) is strongly pseudoconvex we have that \(\frac{i}{2}d\omega_0\) induces a Hermitian metric on \(T^{1,0}X\). Now set
	\[g=\omega_0\otimes\omega_0+\frac{i}{2}d\omega_0\left(P^{1,0}(\cdot),P^{0,1}(\cdot)\right).\]
	Identifying \(TX\) with \(1\otimes TX \subset \C TX\) we find that \(g\) defines a Riemannian metric on \(X\).
	Using BRT trivializations (see Section \ref{s:prelim}) one can check that the Lie derivative of \(g\) with respect to \(T\) vanishes and hence that \(g\) is \(\R\)-invariant. Then the claim follows from Theorem \ref{thm:mainthm}.
\end{proof}

\begin{corollary}\label{c-gue170811}
	Let \((X,T^{1,0}X)\) be a connected CR manifold equipped with a transversal CR \(\R\)-action. Assume that \(L\rightarrow X\) is a rigid positive CR line bundle over \(X\). We have that the \(\R\)-action comes from a CR torus action if and only if at least one of the following conditions is satisfied:
	\begin{itemize}
		\item[a)] there exists an orbit which is a non-closed subset of \(X\),
		\item[b)] there exists an orbit which is compact.
	\end{itemize}
\end{corollary}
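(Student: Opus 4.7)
The plan is to mimic the proof of Corollary~\ref{cor:psc}, with the rigid positive CR line bundle \(L\) playing the role of strong pseudoconvexity. It suffices to exhibit a Riemannian metric on \(X\) with respect to which the \(\R\)-action acts by isometries; once this is done, Theorem~\ref{thm:mainthm} delivers the dichotomy, and translating the two cases into statements about orbits yields the corollary.

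First I would construct the metric. Let \(T\) be the infinitesimal generator of the \(\R\)-action. Transversality yields \(\C TX=\C T\oplus T^{1,0}X\oplus T^{0,1}X\) with associated projections \(P\), \(P^{1,0}\), \(P^{0,1}\), and let \(\omega_0\) be the characteristic one-form defined by \eqref{e-gue150808I}. In Corollary~\ref{cor:psc} the Levi form \(-\tfrac{1}{2i}d\omega_0\) provided the positive rigid Hermitian form on \(T^{1,0}X\); here I replace it by the curvature \(R^L\), which is rigid because \((L,h^L)\) is, and positive by hypothesis. Put
\[g=\omega_0\otimes\omega_0+R^L\bigl(P^{1,0}(\cdot),P^{0,1}(\cdot)\bigr),\]
understood as a real symmetric bilinear form on \(TX\subset\C TX\); this is nothing but the real part of the rigid Hermitian metric \(\langle\,\cdot\,|\,\cdot\,\rangle\) introduced in Section~\ref{s-gue170805}, so it is a genuine Riemannian metric on \(X\).

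Next I would verify \(\R\)-invariance. In a BRT trivialization \((D,(z,\eta),\phi)\) one has \(T=\partial/\partial\eta\), and the forms \(\omega_0\), \(P^{1,0}\), \(P^{0,1}\) together with the components of \(R^L\) in any rigid local frame of \(T^{1,0}X\) are \(\eta\)-independent. This is the content of rigidity: the local weight \(\Phi\) satisfying \(|s|_{h^L}^2=e^{-2\Phi}\) for \(s\) a rigid CR frame is \(T\)-annihilated, and \eqref{e-gue150808w} then makes \(R^L\) rigid. Hence \(\mathcal{L}_T g=0\), so the \(\R\)-action acts by isometries of \((X,g)\).

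With this in hand, Theorem~\ref{thm:mainthm} produces the dichotomy. In case~2 every \(\R\)-orbit lies inside the compact torus orbit through the same point; any closed \(\R\)-orbit is therefore compact, and so every orbit satisfies either (a) or (b). Conversely, in case~1 all orbits are closed and non-compact, so no orbit can satisfy (a) or (b). This is the claimed equivalence. The only real computation is the \(T\)-invariance of \(g\), which by rigidity is routine; that is the main (and very mild) obstacle.
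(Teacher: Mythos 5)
Your proposal is correct and follows essentially the same route as the paper: both construct the $\R$-invariant Riemannian metric $g=\omega_0\otimes\omega_0+R^L\bigl(P^{1,0}(\cdot),P^{0,1}(\cdot)\bigr)$ by substituting the rigid positive curvature $R^L$ for $\frac{i}{2}d\omega_0$ in the proof of Corollary~\ref{cor:psc}, check $\mathcal{L}_Tg=0$ via rigidity in BRT coordinates, and then invoke Theorem~\ref{thm:mainthm}. Your explicit translation of the dichotomy into conditions (a) and (b) is a correct elaboration of what the paper leaves implicit.
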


\begin{proof}
	We have that \(L\) is an \(\R\)-invariant  Hermitian CR line bundle, which implies that the fibrewise metric on \(L\) is \(\R\)-invariant. Therefore its curvature, which is a smooth \((1,1)\)-form \(R^L\) on \(X\), is \(\R\)-invariant. Using the positivity of $R^L-2s\mathcal{L}$, for some $s\in\mathbb R$, we can proceed similar to the proof of Corollary~\ref{cor:psc} replacing \(\frac{i}{2}d\omega_0\) by $R^L-2s\mathcal{L}$. Thus, the Corollary follows from Theorem~\ref{thm:mainthm}.
\end{proof}

\begin{remark}
	Note that we do not assume compactness of \(X\) in the corollaries above. When \(X\) is compact, at least one of the the conditions a) and b) is automatically satisfied. Note that under the additional assumption that \(X\) is compact the conclusion of Corollary~\ref{cor:psc} can be  found in \cite{Le92}.
\end{remark}

From Corollary~\ref{cor:psc} and the equivariant embedding theorem established in~\cite{HHL16}, we  can prove

\begin{theorem}\label{thm:equivariantembeddingpseudoconvex}
		Let  $X$ be a connected compact strongly pseudoconvex CR manifold equipped with a transversal CR $\Real$-action $\eta$, $\eta\in\Real$: $\eta: X\to X$, $x\mapsto \eta\circ x$. Then, there exists $N\in\mathbb N$, $\nu_1,\ldots,\nu_N\in\Real$ and a CR embedding $\map{\Phi=(\Phi_1,\ldots,\Phi_N)}{X}{\Complex^N}$
		such that \[\Phi(\eta\circ x)=(e^{i\nu_1\eta}\Phi_1(x),\ldots,e^{i\nu_N\eta}\Phi_N(x))\] holds for all $x\in X$ and $\eta\in\Real$. In other words, $\Phi$ is eqivariant with respect to the holomorphic $\Real$-action $\eta\circ z=(e^{i\nu_1\eta}z_1,\ldots,e^{i\nu_N\eta}z_N)$ on $\C^N$.
	\end{theorem}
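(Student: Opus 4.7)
The plan is to combine Corollary \ref{cor:psc} with the $S^1$-equivariant Boutet de Monvel embedding theorem from \cite{HHL16} through a rational approximation inside the enveloping torus. First, since $X$ is compact, case 1 of Theorem \ref{thm:mainthm} is excluded, so by Corollary \ref{cor:psc} the $\Real$-action extends to a CR action of a torus $T^d = \overline{\gamma(\Real)}$ with generators $T_1, \ldots, T_d$, and one can write $T = \beta_1 T_1 + \cdots + \beta_d T_d$ with $\beta_1, \ldots, \beta_d$ linearly independent over $\mathbb{Q}$ (by Kronecker's theorem, as $\gamma(\Real)$ is dense in $T^d$). The case $d = 1$ reduces to a genuine $S^1$-action and is handled directly by \cite{HHL16}, so assume $d \geq 2$.

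Next I would pick rationals $q_1, \ldots, q_d$ close enough to $\beta_1, \ldots, \beta_d$ that $T' := q_1 T_1 + \cdots + q_d T_d$ remains transversal to $T^{1,0}X \oplus T^{0,1}X$ everywhere; this uses that transversality is a $C^0$-open condition on the generating vector field and that $X$ is compact. After clearing denominators $T'$ becomes the infinitesimal generator of a transversal CR $S^1$-action on $X$, namely the one associated to a circle subgroup of $T^d$. Applying the $S^1$-equivariant Boutet de Monvel theorem of \cite{HHL16} to this auxiliary circle action produces a CR embedding $F = (F_1, \ldots, F_M) : X \to \Complex^M$ whose components are $S^1$-Fourier modes, i.e.\ $F_j(e^{i\theta}\cdot x) = e^{im_j\theta} F_j(x)$ for some $m_j \in \mathbb{Z}$.

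I would then decompose each $F_j$ into $T^d$-Fourier modes $F_j = \sum_{k \in \mathbb{Z}^d} F_{j,k}$, with $F_{j,k}$ the $k$-th Fourier coefficient of $F_j$ under the torus action; because $F_j$ is smooth this converges absolutely in $C^\infty(X)$. Since the torus action is CR and commutes with $\ddbar_b$, each $F_{j,k}$ is still CR, and by construction it is a joint eigenfunction of $-iT_1, \ldots, -iT_d$ with eigenvalues $k_1, \ldots, k_d$; in particular $-iT F_{j,k} = \nu_{j,k} F_{j,k}$ with $\nu_{j,k} := \sum_\ell \beta_\ell k_\ell$, so $F_{j,k}(\eta \circ x) = e^{i\nu_{j,k}\eta} F_{j,k}(x)$. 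To pass to a finite equivariant embedding I would truncate: the partial sum $F^{\leq K}_j := \sum_{|k|_\infty \leq K} F_{j,k}$ converges to $F_j$ in $C^\infty$, so by the standard $C^1$-openness of smooth injective immersions of a compact manifold, $F^{\leq K}$ itself is a CR embedding for all sufficiently large $K$. Then the tuple $\Phi := (F_{j,k})_{j,\,|k|_\infty \leq K}$ is also a CR embedding, because any failure of injectivity or immersion for $\Phi$ would, upon taking the appropriate linear combinations, imply the corresponding failure for $F^{\leq K}$; each coordinate $F_{j,k}$ carries the weight $\nu_{j,k}$, which gives the desired form of the equivariance.

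The main technical subtlety I expect is the $C^1$-openness step: an injective smooth immersion of a compact manifold remains an injective immersion under sufficiently small $C^1$-perturbations. Injectivity alone is not $C^1$-open in general, but combined with the immersion property and compactness it is, by a standard contradiction argument in which a hypothetical sequence of failures would force two distinct preimages to accumulate at a single point where the limit map is a local diffeomorphism, contradicting injectivity in the limit. With this step in hand the proof is complete.
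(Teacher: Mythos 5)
Your proposal is correct and follows the same overall strategy as the paper: extend the $\Real$-action to a CR torus action via Corollary~\ref{cor:psc}, perturb the generator to a rational combination $\sum q_jT_j$ to obtain an auxiliary transversal CR $S^1$-action, and invoke the $S^1$-equivariant embedding theorem of~\cite{HHL16}. The only genuine divergence is in the last step. The paper exploits that each $\mathcal{H}^0_{b,m}(X)$ (for the auxiliary circle action) is finite dimensional and invariant under the torus action, decomposes it a priori into joint torus eigenspaces $\mathcal{H}^0_{b,(m,\alpha)}(X)$, and simply chooses the orthonormal basis fed into the~\cite{HHL16} theorem to be adapted to this finite orthogonal decomposition; equivariance of $\Phi$ is then automatic and no perturbation argument is needed. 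You instead take the embedding functions $F_j$ produced by~\cite{HHL16} as given, expand them a posteriori into torus Fourier modes, and recover an embedding by truncating and using $C^1$-openness of injective immersions of a compact manifold. That argument is valid (including the linear-combination reduction from $\Phi$ to $F^{\leq K}$ and the standard compactness argument for stability of injective immersions), but the truncation machinery is unnecessary: since $F_j\in\mathcal{H}^0_{b,m_j}(X)$ and this space is finite dimensional and preserved by the torus action (the torus commutes with both $\ddbar_b$ and $T_0$), the Fourier expansion $F_j=\sum_k F_{j,k}$ has only finitely many nonzero terms, so you could take $\Phi=(F_{j,k})_{j,k}$ over all nonzero modes directly, which is exactly the paper's adapted-basis argument in disguise. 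What your route buys is robustness --- it would survive even if the mode spaces were infinite dimensional --- at the cost of an extra analytic step ($C^\infty$ convergence of the equivariant Fourier series and $C^1$ stability of embeddings) that the paper avoids.
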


\begin{proof}
		By the assumptions we can apply Corollary \ref{cor:psc}. Since \(X\) is compact we have that the \(\R\)-action is a subaction of a CR torus action $T^r\curvearrowright X$ denoted by \((e^{i\tau_1},\ldots,e^{i\tau_r})\). We may assume that its rank \(r\) satisfies \(r>1\), because otherwise we have that the \(\R\)-action reduces to an \(S^1\)-action.
		
		Consider the vector fields $T_1,\ldots,T_r$ on $X$ given by
		\[(T_j)_x=\frac{\partial}{\partial \tau_j}(1,\ldots,1,e^{i\tau_j},1,\ldots,1)\circ x\mid_{\tau_j=0}.\]
		Let \(T\) denote the vector field induced by the \(\R\)-action. We find \(T=\sum_{j=1}^r\lambda_jT_j\) for some real numbers \(\lambda_1,\ldots,\lambda_r\in\R\).
		 By assumption, \(T\) is transversal and hence we find \(\tilde{\lambda}_1,\ldots,\tilde{\lambda}_r\in\mathbb Q\) (\(\tilde{\lambda}_j\) close to \(\lambda_j\)), 
		such that \(\tilde{T}=\sum_{j=1}^r\tilde{\lambda_j}T_j\) is transversal. Since \(\tilde{\lambda}_j\in\mathbb Q\), \(j=1,\ldots,r\), we have that \(\tilde{T}\) defines a transversal CR \(S^1\)-action on \(X\) and after rescaling \(\tilde{T}\) we can achieve that the \(S^1\)-action can be represented by \((e^{i\theta}, x)\mapsto e^{i\theta}\circ x\) with \(X_{\text{reg}}:=\set{x\in X;\, e^{i\theta}\circ x\neq x, \forall\theta\in]0,2\pi[}\neq\emptyset\) (we denote the rescaled vector field by \(T_0\)). Choose a \(T^r\)-invariant Hermitian metric on \(X\) with
		\[T^{1,0}X\perp T^{0,1}X\text{, } T_0\perp T^{1,0}X\oplus T^{0,1}X \text{ and } \|T_0\|=1.\]
Denote the space of CR functions for eigenvalue \(m\in\mathbb N\) with respect to the \(S^1\)-action \(e^{i\theta}\) by \(\mathcal{H}^0_{b,m}(X)\), that is
		\begin{align*}
		\mathcal{H}^0_{b,m}(X)&=\{f\in C^\infty(X);\, \ddbar_bu=0,\ \ T_0f=imf\}\\
		&=\{f\in C^\infty(X);\, \ddbar_bf=0,\ \ f(e^{i\theta}\circ x)=e^{im\theta}f(x)\text{, }\forall x\in X,\theta\in\R\}.
		\end{align*}
		The \(S^1\)-action is transversal, so we have \(\dim\mathcal{H}^0_{b,m}(X)<\infty\).
		Since \([T_0,T_j]=0\) for \(j=1,\ldots,r\) (or in other words, the \(S^1\)-action  commutes with the \(T^r\)-action) we find a decomposition
		\begin{align}\label{Eq0}
		\mathcal{H}^0_{b,m}(X)=\bigoplus_{\alpha\in\mathbb Z^r}\mathcal{H}_{b,(m,\alpha)}(X)
		\end{align}
		where
		\begin{align*}
		\mathcal{H}^0_{b,(m,\alpha)}(X)&=\{f\in \mathcal{H}^0_{b,m}(X),\ \  T_j f=i\alpha_jf, 1\leq j\leq r \}\\
		&=\{f\in \mathcal{H}^0_{b,m}(X),\ \ f((e^{i\tau_1},\ldots,e^{i\tau_r})\circ x)=e^{i\alpha\cdot\tau}f(x)\text{, }\forall x\in X,\tau\in\R^r\}
		\end{align*}
		with \(\alpha\cdot\tau=\alpha_1\tau_1+\ldots+\alpha_r\tau_r\). Furthermore, the decomposition (\ref{Eq0}) is orthogonal with respect to the \(L^2\) inner product coming from the \(T^r\)-invariant Hermitian metric. Given \(f\in\mathcal{H}^0_{b,(m,\alpha)}\) we find \(f(\eta\circ x)=e^{i(\lambda_1\alpha_1+\ldots+\lambda_r\alpha_r)\eta}f(x)\) for all \(x\in X\) and \(\eta\in\R\) because \(T=\sum_{j=1}^r\lambda_jT_j\) and hence \(\eta\circ x=(e^{i\lambda_1\eta},\ldots,e^{i\lambda_r\eta})\circ x\).
		Choose an orthonormal basis \(\{f_j\}_{j=1}^{d_m}\) of \(\mathcal{H}^0_{b,m}(X)\) with respect to the decomposition (\ref{Eq0}) and define a CR map \(\map{\Psi_m}{X}{\C^{d_m}}\), \(\Psi_m(x)=(f_1(x),\ldots,f_{d_m}(x))\). By construction the map \(\Psi_m\) is \(\R\)-equivariant, that is
		\[\Psi_m(\eta\cdot x)=(e^{i\nu_1\eta}f_1(x),\ldots,e^{i\nu_{d_m}\eta}f_{d_m}(x))\]
		for some \(\nu_1,\ldots,\nu_{d_m}\in\R\). Applying the embedding theorem in \cite{HHL16}  
		there exist \(m_1,\ldots,m_{\tilde{N}}\in\mathbb N\) such that the map \(\Phi:=(\Psi_{m_1},\ldots,\Psi_{m_{\tilde{N}}}):X\rightarrow \Complex^N\) is a CR embedding where \(N\in\mathbb N\) is some positive integer. Since \(\Psi_m\) is \(\R\)-equivariant the same is true for \(\Phi\) which completes the proof.
\end{proof}

Let $(X,T^{1,0}X)$ be a compact connected CR manifold equipped with a transversal CR $\R$-action. Assume that $X$ admits a rigid positive CR line bundle $L$.
From Theorem~\ref{thm:mainthm} and Corollary~\ref{c-gue170811}, we see that the $\Real$-action comes from a CR torus action \(T^d\curvearrowright X\) denoted by
$(e^{i\theta_1},\ldots,e^{i\theta_d})$. It should be mentioned that CR torus action means that \(T^d\) acts by CR automorphisms. 
  In this work, we need

\begin{theorem}\label{t-gue170811}
With the assumptions and notations above, we can find local CR rigid trivializations of $L$ defined on $D_j$, $j=1,\ldots,N$, such that $X=\bigcup^N_{j=1}D_j$, and
\begin{equation}\label{e-gue170811y}
D_j=\bigcup_{(e^{i\theta_1},\ldots,e^{i\theta_d})\in T^d}(e^{i\theta_1},\ldots,e^{i\theta_d})\circ D_j,\ \ j=1,2,\ldots,N,
\end{equation}
where $(e^{i\theta_1},\ldots,e^{i\theta_d})\circ D_j=\set{(e^{i\theta_1},\ldots,e^{i\theta_d})\circ x;\, x\in D_j}$.
\end{theorem}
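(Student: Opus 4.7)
By Corollary~\ref{c-gue170811} (applicable since $X$ is compact), the transversal CR $\R$-action on $X$ comes from a CR torus action $T^d\curvearrowright X$, with $\R$ embedding densely in $T^d$. Since $L$ is rigid CR, the rigid structure equips $L$ with a canonical $\R$-flow $\Phi^L_\eta$ lifting $\phi_\eta$; the first step is to extend this to a smooth $T^d$-action on $L$ by CR line bundle automorphisms, using density of $\R$ in $T^d$ together with smoothness of the torus action on $X$.

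For each $p\in X$, the plan is to choose a BRT coordinate patch $V_p$ containing $p$ on which $L$ admits a rigid CR trivializing section $s_p$, and to set
\[
D_p:=\bigcup_{t\in T^d}t\circ V_p,
\]
which is open and $T^d$-invariant. One then defines $\widetilde s_p:D_p\to L$ by $\widetilde s_p(t\circ y):=t\cdot s_p(y)$ for $t\in T^d$ and $y\in V_p$. Compactness of $X$ will then extract the desired finite cover $D_1,\ldots,D_N$ from $\{D_p\}_{p\in X}$.

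The essential step to be verified is the well-definedness of $\widetilde s_p$: if $t_1\circ y_1=t_2\circ y_2$ with $y_1,y_2\in V_p$, then setting $\tau:=t_1^{-1}t_2$ gives $y_1=\tau\circ y_2$, and one must show $s_p(y_1)=\tau\cdot s_p(y_2)$. For $\tau=\phi_\eta$ in the dense $\R$-subgroup this is the integrated form of rigidity $Ts_p=0$, proved by decomposing the trajectory from $y_2$ to $y_1$ into subintervals each contained in a rigid trivializing chart and invoking the rigidity of the transition functions, so that no twist is accumulated as the trajectory crosses between charts. The general case follows by choosing $\phi_{\eta_n}\to\tau$ in $T^d$, applying the $\R$-case for the $\eta_n$, and passing to the limit using continuity of the $T^d$-action on $L$. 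Once this is in place, smoothness and non-vanishing of $\widetilde s_p$ follow from those of $s_p$ and the smoothness of the $T^d$-action; $T\widetilde s_p=0$ follows from $Ts_p=0$ and commutativity of $T^d$; and $\ddbar_b\widetilde s_p=0$ follows from $\ddbar_b s_p=0$ and the fact that the $T^d$-action is CR.

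The principal obstacle will be the patchwork argument establishing $\R$-flow invariance of rigid frames along trajectories that temporarily leave the chart, which rests on careful bookkeeping with the rigid transition functions; all remaining steps are formal.
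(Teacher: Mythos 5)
Your overall architecture differs from the paper's: you propose to lift the $\R$-flow to $L$ once and for all, extend it to a $T^d$-action on $L$ by density, and then sweep a single trivializing section around each torus orbit; the paper instead runs an open--closed connectedness argument in the group variable, extending a rigid trivializing section arc by arc around each circle factor of $T^d$ in turn. The difference is not merely organizational, because the step you yourself isolate as ``the essential step to be verified'' is exactly where your mechanism fails. Rigidity of the transition functions means only $Tg_{jk}=0$, i.e.\ $g_{jk}$ is constant along each connected component of $(\R\text{-orbit})\cap(U_j\cap U_k)$. When the trajectory from $y_2$ to $y_1=\eta\circ y_2$ leaves a chart intersection and re-enters it later --- which is unavoidable here, since $\eta$ may be arbitrarily large and the orbit is in general dense in a whole torus orbit --- the transition functions evaluated at the successive junction points need not telescope, and a nontrivial phase (a holonomy of the partial connection that the rigid structure defines along the orbits) can a priori accumulate. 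Chart-by-chart bookkeeping with the infinitesimal condition $Tg_{jk}=0$ cannot rule this out; ruling it out is precisely the content of the theorem.

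The ingredient that actually kills the holonomy in the paper is the density of the $\R$-orbits in the torus orbits applied to the transition function itself: in the closedness step of the open--closed argument, the transition function $g$ between the already-extended section $\Td s$ and a fresh local section $s_1$ is shown in \eqref{e-gue170814} to be independent of the circle coordinate $x_{2n-1}$ --- constant along the whole compact circle direction, not merely along short $\R$-orbit segments --- by combining $Tg=0$ with continuity of $g$ and of the action (Theorem~\ref{thm:steenrod}) and the fact that $\ol{\gamma(\R)}$ is all of $T^d$. This is what allows the section to close up after a full turn. In your proposal the density of $\R$ in $T^d$ appears only in the final, easy limiting step (passing from $\tau$ in the $\R$-subgroup to a general $\tau\in T^d$), not at the crux. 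Relatedly, your opening step --- extending the lifted $\R$-flow on $L$ to a $T^d$-action on $L$ ``by density'' --- presupposes that the limits of $\Phi^L_{\eta_n}$ exist fibrewise and are independent of the approximating sequence, which is again equivalent to the absence of holonomy; note that in the paper the torus-invariance of $L$ is a \emph{consequence} of Theorem~\ref{t-gue170811}, not an input to it. To repair your argument you would need to import the density argument into the well-definedness step itself, at which point you are essentially reconstructing the paper's proof in different packaging.
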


\begin{proof}
Fix $p\in X$. Assume first that
\begin{equation}\label{e-gue170813}
\mbox{$(e^{i\theta_1},1,\ldots,1)\circ p\neq p$, for some $\theta_1\in]0,2\pi[$.}
\end{equation}
Put
\[\begin{split}
&A:=\{\lambda\in[0,2\pi];\, \mbox{We can find a neighborhood $W$ of $p$ and $\varepsilon>0$ such that there is}\\
&\mbox{ a local CR rigid trivializing section $s$ defined on
$\bigcup_{\theta_1\in[0,\lambda+\varepsilon[}(e^{i\theta_1},1,\ldots,1)\circ W$}\}.
\end{split}\]
It is clear that $A$ is a non-empty open set in $[0,2\pi]$. We claim that $A$ is closed. Let $\lambda_0$ be a limit point of $A$. Consider the point $q:=(e^{i\lambda_0},1,\ldots,1)\circ p$.
From \eqref{e-gue170813}, it is not difficult to see that $\frac{\pr}{\pr\theta_1}\neq0$ at $q$. We take local coordinates $x=(x_1,\ldots,x_{2n-1})$ defined on some neighborhood \[D=\set{x=(x_1,\ldots,x_{2n-1});\, \abs{x_j}<4\delta, j=1,\ldots,2n-1},\  \ \delta>0,\]
of $q$ so that $x(q)=0$, $\frac{\pr}{\pr\theta_1}=\frac{\pr}{\pr x_{2n-1}}$ on $D$. Let
\[D_0=\set{x=(x_1,\ldots,x_{2n-1});\, \abs{x_j}<\delta, j=1,\ldots,2n-1}.\]
Then, $D_0$ is an open neighborhood of $q$ and $(e^{i\theta_1},1,\ldots,1)\circ x=(x_1,\ldots,x_{2n-1}+\theta_1)$, for all $x\in D_0$, $\theta_1\in[0,\delta]$.
It is clear that for some $\delta>\varepsilon_1>0$, $\varepsilon_1$ small, there is a local CR trivializing section $s_1$ defined on
\[\bigcup_{\theta_1\in]\lambda_0-\varepsilon_1,\lambda_0+\varepsilon_1[}(e^{i\theta_1},1,\ldots,1)\circ\hat W\Subset D_0,\]
where $\hat W$ is a small neighborhood of $p$. Since $\lambda_0$ is a limit point of $A$, we can find a local CR trivializing section $\Td s$ defined on
\[\bigcup_{\theta_1\in[0,\lambda_0-\frac{\varepsilon_1}{4}[}(e^{i\theta_1},1,\ldots,1)\circ\Td W,\]
where $\Td W$ is a small neighborhood of $p$. Since $L$ is rigid, $\Td s=gs_1$ on
\[\bigcup_{\theta_1\in]\lambda_0-\varepsilon_1, \lambda_0-\frac{\varepsilon_1}{4}[}(e^{i\theta_1},1,\ldots,1)\circ\Bigr(\hat W\bigcap\Td W\Bigr)\]
for some rigid CR function $g$.
Let
\[W=\set{(e^{-i\lambda_0},1,\ldots,1)\circ x;\, x=(x_1,\ldots, x_{2n-1})\in D_0, \abs{x_j}<\gamma, j=1,\ldots,2n-1},\]
where $0<\gamma<\delta$ is a small constant so that $W\Subset \Bigr(\hat W\bigcap\Td W\Bigr)$. We consider $g$ as a function on
\[\bigcup_{\theta_1\in]\lambda_0-\varepsilon_1, \lambda_0-\frac{\varepsilon_1}{4}[}(e^{i\theta_1},1,\ldots,1)\circ W.\]
We claim that $g$ is independent of $x_{2n-1}$. Fix
\[x=(x_1,\ldots,x_{2n-1})\in \bigcup_{\theta_1\in]\lambda_0-\varepsilon_1, \lambda_0-\frac{\varepsilon_1}{4}[}(e^{i\theta_1},1,\ldots,1)\circ W\Subset D_0.\]
We have $g(x)=g((e^{i(x_{2n-1}+\frac{\varepsilon_1}{2})},1,\ldots,1)\circ (x_1,\ldots,x_{2n-2},-\frac{\varepsilon_1}{2}))$. Note that
\[(x_1,\ldots,x_{2n-2},-\frac{\varepsilon_1}{2})\in \bigcup_{\theta_1\in]\lambda_0-\varepsilon_1, \lambda_0-\frac{\varepsilon_1}{4}[}(e^{i\theta_1},1,\ldots,1)\circ W.\]
In view of Theorem~\ref{thm:mainthm}, we see that $\ol{\gamma(\R)}$ is the torus $T^d$, we can find a sequence of real numbers $t_j$, $j=1,2,\ldots$, such that
$t_j\circ (x_1,\ldots,x_{2n-2},-\frac{\varepsilon_1}{2})\To (e^{i(x_{2n-1}+\frac{\varepsilon_1}{2})},1,\ldots,1)\circ(x_1,\ldots,x_{2n-1},-\frac{\varepsilon_1}{2})$ as $j\To\infty$
and by Theorem~\ref{thm:steenrod},
\begin{equation}\label{e-gue170814}
g(x)=\lim_{j\To\infty}g(t_j\circ (x_1,\ldots,x_{2n-2},-\frac{\varepsilon_1}{2}))=g(x_1,\ldots,x_{2n-2},-\frac{\varepsilon_1}{2}).
\end{equation}
The claim follows. Hence, we can extend $g$ to $\bigcup_{\theta_1\in[\lambda_0-\frac{\varepsilon_1}{4}, \lambda_0+\frac{\varepsilon_1}{8}[}(e^{i\theta_1},1,\ldots,1)\circ W$ by
\begin{equation}\label{e-gue170813II}
\begin{split}
g:\bigcup_{\theta_1\in[\lambda_0-\frac{\varepsilon_1}{4}, \lambda_0+\frac{\varepsilon_1}{8}[}(e^{i\theta_1},1,\ldots,1)\circ W&\To\Complex,\\
x&\mapsto g(x_1,\ldots,x_{2n-2},-\frac{\varepsilon_1}{2}).
\end{split}
\end{equation}

Put $s=\Td s$ on $\bigcup_{\theta_1\in[0,\lambda_0-\frac{\varepsilon_1}{4}[}(e^{i\theta_1},1,\ldots,1)\circ W$ and  $s=gs_1$ on
\[\bigcup_{\theta_1\in]\lambda_0-\varepsilon_1, \lambda_0+\frac{\varepsilon_1}{8}[}(e^{i\theta_1},1,\ldots,1)\circ W.\]
It is straightforward to check that $s$ is well-defined as a local CR rigid trivializing section on $\bigcup_{\theta_1\in[0,\lambda_0+\frac{\varepsilon_1}{8}[}(e^{i\theta_1},1,\ldots,1)\circ W$.
Thus, $\lambda_0\in A$ and hence $A=[0,2\pi]$.

Now, assume that $(e^{i\theta_1},1,\ldots,1)\circ p=p$, for all $\theta_1\in[0,2\pi]$. Let $s$ be a local CR rigid trivializing section of $L$ defined on an open set $U$ of $p$. Since $(e^{i\theta_1},1,\ldots,1)\circ p=p$, for all $\theta_1\in[0,2\pi]$, we can find a small open set $W$ of $p$ with
\[\bigcup_{\theta_1\in[0,2\pi[}(e^{i\theta_1},1,\ldots,1)\circ W\subset U.\]
We conclude that there is a local CR rigid trivializing section of $L$ defined on
\[\bigcup_{\theta_1\in[0,2\pi]}(e^{i\theta_1},1,\ldots,1)\circ W.\]

Assume that $(1,e^{i\theta_2},1,\ldots,1)\circ p\neq p$, for some $\theta_2\in]0,2\pi[$. Put
\[\begin{split}
&B:=\{\lambda\in[0,2\pi];\, \mbox{We can find a neighborhood $W$ of $p$ and $\varepsilon>0$ such that there is}\\
&\mbox{ a local CR rigid trivializing section $s$ defined on
$\bigcup_{\theta_1\in[0,2\pi], \theta_2\in[0,\lambda+\varepsilon[}(e^{i\theta_1},e^{i\theta_2},\ldots,1)\circ W$}\}.
\end{split}\]
We can repeat the procedure above with minor change and conclude that $B=[0,2\pi]$. Assume that $(1,e^{i\theta_2},1,\ldots,1)\circ p=p$, $\forall\theta_2\in[0,2\pi]$.
It is clear that we can find a neighborhood $W$ of $p$ such that there is a local CR rigid trivializing section $s$ defined on
$\bigcup_{\theta_1\in[0,2\pi], \theta_2\in[0,2\pi]}(e^{i\theta_1},e^{i\theta_2},\ldots,1)\circ W$.
Continuing in this way, we conclude that for every $p\in X$,
there is a local CR rigid trivializing section $s$ of $L$ defined on
\[\bigcup_{(e^{i\theta_1},\ldots,e^{i\theta_d})\in T^d}(e^{i\theta_1},\ldots,e^{i\theta_d})\circ W,\]
where $W$ is an open set of $p$. Since $X$ is compact, the theorem follows.
\end{proof}

Theorem~\ref{t-gue170811} tells us that $L$ is torus invariant. From Theorem~\ref{t-gue170811}, we deduce

\begin{corollary}\label{c-gue170814}
With the assumptions and notations above, we can find local CR rigid trivializations $D_j$ of $L$, $j=1,\ldots,N$, such that $X=\bigcup^N_{j=1}D_j$, and
\begin{equation}\label{e-gue170811yq}
D_j=\bigcup_{t\in\Real}t\circ D_j,\ \ j=1,2,\ldots,N,
\end{equation}
where $t\circ D_j=\set{t\circ x;\, x\in D_j}$.
\end{corollary}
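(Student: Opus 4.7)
The plan is to derive Corollary~\ref{c-gue170814} as an immediate consequence of Theorem~\ref{t-gue170811} by using that the $\Real$-action is a subaction of the CR torus action $T^d\curvearrowright X$.

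First, I would invoke Theorem~\ref{thm:mainthm} together with Corollary~\ref{c-gue170811} and the compactness of $X$ to conclude that the transversal CR $\Real$-action $\eta\mapsto \eta\circ x$ factors through a CR torus action $(e^{i\theta_1},\ldots,e^{i\theta_d})\curvearrowright X$. Unwinding the identification $\overline{\gamma(\Real)}\simeq T^d$ from the proof of Theorem~\ref{thm:mainthm}, there exist real constants $\lambda_1,\ldots,\lambda_d\in\Real$ such that
\[
t\circ x=(e^{i\lambda_1 t},\ldots,e^{i\lambda_d t})\circ x,\quad \forall\,t\in\Real,\ \forall\,x\in X.
\]
In particular, every $T^d$-invariant subset of $X$ is automatically $\Real$-invariant.

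Next, I would apply Theorem~\ref{t-gue170811} to obtain local CR trivializations $D_1,\ldots,D_N$ of $L$ with $X=\bigcup_{j=1}^N D_j$ and
\[
D_j=\bigcup_{(e^{i\theta_1},\ldots,e^{i\theta_d})\in T^d}(e^{i\theta_1},\ldots,e^{i\theta_d})\circ D_j,\quad j=1,\ldots,N.
\]
Combining with the previous display, $t\circ D_j\subset D_j$ for every $t\in\Real$, so $\bigcup_{t\in\Real}t\circ D_j\subset D_j$; the reverse inclusion is trivial since $0\circ x=x$. Hence the same trivializations already satisfy \eqref{e-gue170811yq}, which finishes the proof.

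There is essentially no obstacle here: the work has already been done in Theorem~\ref{t-gue170811} (where the construction of the $T^d$-invariant trivializing sections $s_j$ was the delicate step, requiring the extension argument via rigid CR functions), and Corollary~\ref{c-gue170814} merely records that $T^d$-invariance is a stronger condition than $\Real$-invariance under our assumptions.
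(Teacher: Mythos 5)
Your proposal is correct and follows exactly the route the paper intends: the paper simply states that Corollary~\ref{c-gue170814} is deduced from Theorem~\ref{t-gue170811}, the point being precisely that the $\Real$-action factors through the torus action (via Theorem~\ref{thm:mainthm}), so the $T^d$-invariant trivializations are automatically $\Real$-invariant. Your write-up just makes explicit the one-line deduction the paper leaves implicit.
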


We also need

\begin{lemma}\label{l-gue170815}
With the assumptions and notations above, let $h^L$ be any rigid Hermitian fiber metric of $L$. Then, $h^L$ is $T^d$-invariant.
\end{lemma}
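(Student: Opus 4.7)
The plan is to combine rigidity of $h^L$ with the density fact from Theorem~\ref{thm:mainthm} and Corollary~\ref{c-gue170811} that $T^d=\overline{\gamma(\R)}$; by continuity, any object preserved under the $\R$-subaction and depending continuously on a $T^d$-parameter is preserved under the whole torus.

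First, I would apply Theorem~\ref{t-gue170811} to cover $X$ by torus-invariant open sets $D_j$ each carrying a rigid CR frame $s_j$ of $L$, and write $|s_j|^2_{h^L}=e^{-2\Phi_j}$ on $D_j$. Rigidity of $h^L$ means $T\Phi_j=0$, so $\Phi_j$ is constant along every $\R$-orbit in $D_j$. For $x\in D_j$, torus-invariance of $D_j$ gives $T^d\cdot x\subset D_j$, and density of $\gamma(\R)$ in $T^d$ yields $\overline{\gamma(\R)\cdot x}=T^d\cdot x$; combined with continuity of $\Phi_j$ this forces $\Phi_j(\lambda\cdot x)=\Phi_j(x)$ for every $\lambda\in T^d$. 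Hence $|s_j|^2_{h^L}$ is $T^d$-invariant on each $D_j$.

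Next, I would match this with the $T^d$-action on $L$ itself. Writing the lift in the rigid trivialization $s_j$ as $\lambda\cdot(x,v)=(\lambda\cdot x,\alpha_j(\lambda,x)v)$ with $\alpha_j\colon T^d\times D_j\to\C^*$ smooth, rigidity of $L$ (equivalently $Ts_j=0$) forces the $\R$-subaction to reduce to $(x,v)\mapsto(\eta\cdot x,v)$, that is, $\alpha_j(\gamma(\eta),x)=1$ for all $\eta\in\R$. The same density-plus-continuity argument then gives $\alpha_j(\lambda,x)\equiv 1$, so $\lambda\cdot s_j(x)=s_j(\lambda\cdot x)$ for every $\lambda\in T^d$. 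Combining this with $T^d$-invariance of $|s_j|^2_{h^L}$ yields
\[h^L_{\lambda\cdot x}(\lambda\cdot s_j(x),\lambda\cdot s_j(x))=|s_j|^2_{h^L}(\lambda\cdot x)=|s_j|^2_{h^L}(x)=h^L_x(s_j(x),s_j(x)),\]
which on a line bundle is equivalent to $T^d$-invariance of $h^L$.

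The only real obstacle is the bookkeeping for how the $T^d$-action lifts to $L$ in rigid frames, but this is handled by the same density argument used for the weights $\Phi_j$. The whole proof rests on the observation that a continuous $\R$-invariant object on a set stable under a larger continuous group action, with $\R$-orbits dense in each group orbit, is automatically invariant under that larger group.
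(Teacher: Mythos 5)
Your proof is correct and follows essentially the same route as the paper: cover $X$ by the torus-invariant trivializing sets $D_j$ from Theorem~\ref{t-gue170811}, note that rigidity forces $T\Phi_j=0$, and then upgrade $\R$-invariance of the weights to $T^d$-invariance using the density of $\gamma(\R)$ in $T^d$ together with the continuity from Theorem~\ref{thm:steenrod}. Your extra bookkeeping on how the torus action lifts to $L$ in the rigid frames is only a more explicit version of what the paper leaves implicit (the lift is defined precisely so that the rigid frames are preserved), so there is no substantive difference.
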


\begin{proof}
By Theorem~\ref{t-gue170811}, we can find local CR trivializations $D_j$ of $L$, $j=1,\ldots,N$, such that $X=\bigcup^N_{j=1}D_j$, and
\[D_j=\bigcup_{(e^{i\theta_1},\ldots,e^{i\theta_d})\in T^d}(e^{i\theta_1},\ldots,e^{i\theta_d})\circ D_j,\ \ j=1,2,\ldots,N,\]
where $(e^{i\theta_1},\ldots,e^{i\theta_d})\circ D_j=\set{(e^{i\theta_1},\ldots,e^{i\theta_d})\circ x;\, x\in D_j}$. For each $j=1,\ldots,N$, let $s_j$ be a local rigid CR trivialization
of $L$ on $D_j$, $\abs{s_j}^2_{h^{L_j}}=e^{-2\phi_j}$. Then, $\phi_j$ is $\Real$-invariant. Fix $(e^{i\theta_1},\ldots,e^{i\theta_d})\in T^d$ and $x\in D_j$. In view of Theorem~\ref{thm:mainthm}, we see that $\ol{\gamma(\R)}$ is the torus $T^d$ and we can find a sequence of $\Real$-action $t_k$, $k=1,2,\ldots$, such that $t_k\circ x\To (e^{i\theta_1},\ldots,e^{i\theta_d})\circ x$ as $k\To+\infty$ and by Theorem~\ref{thm:steenrod}, we have
\[\phi_j(x)=\lim_{k\To\infty}\phi_j(t_k\circ x)=\phi_j((e^{i\theta_1},\ldots,e^{i\theta_d})\circ x).\]
Thus, $h^L$ is torus invariant. The lemma follows.
\end{proof}

\section{The operator $-iT$}\label{s-gue170814}

From now on, we let $(X,T^{1,0}X)$ be a compact connected CR manifold
of dimension $2n-1$, $n\geqslant2$, endowed with a locally free transversal CR $\Real$-action $\eta$, $\eta\in\Real$: $\eta: X\to X$, $x\mapsto \eta\circ x$, and
let $(L,h^L)$ be a rigid CR line bundle over $X$ and assume that  there is an open interval $I\subset\Real$, such that $R^L-2s\mathcal{L}$ is positive definite on $X$, for every $s\in I$, where $h^L$ is a rigid Hermitian metric on $L$ and $R^L$ is the curvature of $L$ induced by $h^L$. For simplicity, we assume that $]-\delta,\delta[\subset I$, where $\delta>0$. Hence $R^L$ is positive on $X$.  Let
$\langle\,\cdot\,|\,\cdot\,\rangle$ be the rigid Hermitian metric on $\Complex TX$ induced by $R^L$ such that
$$T^{1,0}X\perp T^{0,1}X,\:\: T\perp (T^{1,0}X\oplus T^{0,1}X),\:\:
\langle\,T\,|\,T\,\rangle=1.$$ The rigid Hermitian metric $\langle\,\cdot\,|\,\cdot\,\rangle$ on $\Complex TX$ induces a rigid Hermitian metric $\langle\,\cdot\,|\,\cdot\,\rangle$ on $\oplus^{n-1}_{j=1}T^{*0,j}X$. We denote by $dv_X$ the volume form induced by $\langle\,\cdot\,|\,\cdot\,\rangle$.
Let $(\,\cdot\,|\,\cdot\,)_k$ be the $L^2$ inner product on $\Omega^{0,q}(X,L^k)$ induced by $h^{L^k}$ and
$dv_X$ and let $\norm{\cdot}_k$ be the corresponding norm. Let $L^2_{(0,q)}(X,L^k)$ be the completion of $\Omega^{0,q}(X,L^k)$ with respect
to $(\,\cdot\,|\,\cdot\,)_k$. We extend $(\,\cdot\,|\,\cdot\,)_k$ to $L^2_{(0,q)}(X,L^k)$. Consider the operator
\[-iT: \Omega^{0,q}(X,L^k)\To\Omega^{0,q}(X,L^k)\]
and we extend $-iT$ to $L^2_{(0,q)}(X,L^k)$ space by
\[\begin{split}
&-iT: {\rm Dom\,}(-iT)\subset L^2_{(0,q)}(X,L^k)\To L^2_{(0,q)}(X,L^k),\\
&{\rm Dom\,}(-iT)=\set{u\in L^2_{(0,q)}(X,L^k);\, -iTu\in L^2_{(0,q)}(X,L^k)}.
\end{split}\]

\begin{theorem}\label{t-gue170817yc}
The operator $-iT: {\rm Dom\,}(-iT)\subset L^2_{(0,q)}(X,L^k)\To L^2_{(0,q)}(X,L^k)$ is self-adjoint.
\end{theorem}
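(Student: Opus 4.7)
The strategy is to realize $-iT$ as the infinitesimal generator of a strongly continuous one-parameter group of unitaries on $L^2_{(0,q)}(X,L^k)$ and then invoke Stone's theorem.

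First I would construct the one-parameter group. Because $L$ is rigid CR, the transition functions between local rigid CR frames $s_j$ are annihilated by $T$; this gives a canonical lift of the flow of $T$ to bundle automorphisms $\eta\colon L^k\to L^k$ (which is trivial on each frame $s_j^{\otimes k}$). Since the $\Real$-action preserves $T^{1,0}X$, the pullback by $\eta$ is well defined on $T^{*0,q}X$. Together these give maps $U_\eta\colon\Omega^{0,q}(X,L^k)\to\Omega^{0,q}(X,L^k)$ with $U_{\eta+\eta'}=U_\eta U_{\eta'}$ and $(d/d\eta)|_{\eta=0}U_\eta u=Tu$.

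Second, I would verify that $U_\eta$ extends to a strongly continuous unitary group on $L^2_{(0,q)}(X,L^k)$. The rigidity of $h^L$ gives that $h^{L^k}$ is $\Real$-invariant under the lifted action; the rigidity of $\langle\,\cdot\,|\,\cdot\,\rangle$ gives that the induced metric on $T^{*0,q}X$ and the volume form $dv_X$ are $\Real$-invariant. Hence $(U_\eta u\,|\,U_\eta v)_k=(u\,|\,v)_k$ on smooth sections, so $U_\eta$ extends to a unitary on $L^2_{(0,q)}(X,L^k)$. Strong continuity follows from the smoothness of the flow and compactness of $X$ (giving uniform, hence $L^2$, convergence on smooth sections) together with $\norm{U_\eta}=1$ and density. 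Differentiating the identity $(U_\eta u\,|\,U_\eta v)_k=(u\,|\,v)_k$ at $\eta=0$ on smooth sections also shows that $-iT$ is symmetric with respect to $(\,\cdot\,|\,\cdot\,)_k$.

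Third, by Stone's theorem there is a unique self-adjoint operator $A$ on $L^2_{(0,q)}(X,L^k)$ with $U_\eta=e^{i\eta A}$, and $Au=-iTu$ for $u\in\Omega^{0,q}(X,L^k)$; since $U_\eta$ preserves $\Omega^{0,q}(X,L^k)$, the latter is a core for $A$.

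The main remaining point, and the technically delicate one, is to identify $\mathrm{Dom}(A)$ with the maximal distributional domain $\{u\in L^2_{(0,q)}(X,L^k);\, -iTu\in L^2_{(0,q)}(X,L^k)\}$ appearing in the statement. One inclusion is immediate. For the other, given such a $u$ I would use the Friedrichs-type mollification along the flow
\[
u_\epsilon:=\int_\Real\varphi_\epsilon(\eta)\,U_\eta u\,d\eta\quad\text{(Bochner integral in $L^2$)},
\]
where $\varphi_\epsilon\in C^\infty_0(\Real)$ is a standard mollifier. Then $u_\epsilon\to u$ in $L^2$. Using $(U_\eta u\,|\,v)_k=(u\,|\,U_{-\eta}v)_k$, integration by parts in $\eta$, and the skew-symmetry $(u\,|\,Tw)_k=-(Tu\,|\,w)_k$ (valid for $w$ smooth, which is the distributional hypothesis on $u$), one computes
\[
A u_\epsilon=\int_\Real\varphi_\epsilon(\eta)\,U_\eta(-iTu)\,d\eta\longrightarrow -iTu\quad\text{in }L^2_{(0,q)}(X,L^k).
\]
Closedness of $A$ then yields $u\in\mathrm{Dom}(A)$ with $Au=-iTu$, completing the identification and hence the self-adjointness claim.
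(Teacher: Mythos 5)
Your proof is correct, but it takes a genuinely different route from the paper. The paper argues directly with the Hilbert adjoint: rigidity of $h^{L^k}$, $\langle\,\cdot\,|\,\cdot\,\rangle$ and $dv_X$ gives $(\,-iTu\,|\,v\,)_k=(\,u\,|\,-iTv\,)_k$ on smooth sections, whence ${\rm Dom\,}(-iT)^*\subset{\rm Dom\,}(-iT)$ with agreement of the operators; for the reverse inclusion it fixes $u,g$ in the maximal domain, invokes the classical Friedrichs lemma to produce smooth $g_j$ with $g_j\to g$ and $-iTg_j\to -iTg$ in $L^2$, and passes to the limit in the symmetry identity. You instead lift the flow of $T$ to a strongly continuous unitary group $U_\eta$ on $L^2_{(0,q)}(X,L^k)$ (using that the transition functions of rigid CR frames are annihilated by $T$, so parallel transport along flow lines is globally consistent), apply Stone's theorem, and then identify ${\rm Dom\,}(A)$ with the maximal distributional domain by mollifying $u$ itself along the flow. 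The two arguments are close cousins --- each rests on a mollification step and on the formal skew-adjointness of $T$ coming from rigidity --- but they regularize different objects (the paper smooths the test vector $g$, you smooth $u$ along the orbits). Your route is heavier to set up (one must verify the group law, unitarity and strong continuity of the lift) but yields more: the unitary group and the spectral calculus for $-iT$ come for free, which would in fact streamline the paper's later discussion of ${\rm Spec\,}(-iT)$ in Theorem~\ref{t-gue170828}. The paper's route is shorter and purely local, needing only the infinitesimal operator and the standard Friedrichs lemma. Both are complete proofs of the statement.
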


\begin{proof}
Let $(-iT)^*: {\rm Dom\,}(-iT)^*\subset L^2_{(0,q)}(X,L^k)\To L^2_{(0,q)}(X,L^k)$ be the Hilbert adjoint of $-iT$ with respect to $(\,\cdot\,|\,\cdot\,)_k$.
Since $(\,\cdot\,|\,\cdot\,)_k$ is rigid, we have $(\,-iTu\,|\,v\,)_k=(\,u\,|\,-iTv\,)_k$, for all $u, v\in\Omega^{0,q}(X,L^k)$. From this observation, it is easy to see that
${\rm Dom\,}(-iT)^*\subset  {\rm Dom\,}(-iT)$ and $-iTu=(-iT)^*u$, for all $u\in{\rm Dom\,}(-iT)^*$. Now, fix $u\in {\rm Dom\,}(-iT)$. We want to show that $u\in{\rm Dom\,}(-iT)^*$ and  $-iTu=(-iT)^*u$. Let $g\in{\rm Dom\,}(-iT)$. By classical Friedrichs' lemma, we can find $g_j\in\Omega^{0,q}(X,L^k)$, $j=1,2,\ldots$, such that $\norm{g_j-g}_k\To0$ as $j\To\infty$ and $\norm{(-iTg_j)-(-iTg)}_k\To0$ as $j\To\infty$. Now,
\[
(\,u\,|\,-iTg\,)_k=\lim_{j\To\infty}(\,u\,|\,-iTg_j\,)_k=\lim_{j\To\infty}(\,-iTu\,|\,g_j\,)_k=(\,-iTu\,|\,g\,)_k.\]
Hence, $u\in{\rm Dom\,}(-iT)^*$ and  $-iTu=(-iT)^*u$. The theorem follows.
\end{proof}

From Theorem~\ref{thm:mainthm} and Corollary~\ref{c-gue170811}, we see that the $\Real$-action $\eta$ comes from a torus action \(T^d\curvearrowright X\) denoted by $(e^{i\theta_1},\ldots,e^{i\theta_d})$.  By Theorem~\ref{t-gue170811}, we see that $X$ can be covered by torus invariant trivializations.  By using these  torus invariant trivializations,  the torus action on $X$ lifts to $L^k$. In view of Theorem~\ref{t-gue170811} and Lemma~\ref{l-gue170815}, we see that $L$, $h^L$ and $R^L$ are torus invariant and hence the Hermitian metric $\langle\,\cdot\,|\,\cdot\,\rangle$ and the $L^2$ inner product $(\,\cdot\,|\,\cdot\,)_k$ are torus invariant.

Note that $T^d\in\operatorname{Aut}_{\operatorname{CR}}(X)$. As in the $S^1$-action case (see Section 2.3 in~\cite{HLM16}), for $u\in\Omega^{0,q}(X,L^k)$ and for any $(e^{i\theta_1},\ldots,e^{i\theta_d})\in T^d$ , we define
\begin{equation}\label{e-gue150508faII}
u((e^{i\theta_1},\ldots,e^{i\theta_d})\circ x):=(e^{i\theta_1},\ldots,e^{i\theta_d})^*u(x)\in\Omega^{0,q}(X,L^k),
\end{equation}
where
\[(e^{i\theta_1},\ldots,e^{i\theta_d})^*: T^{*0,q}_xX\To T^{*0,q}_{(e^{-i\theta_1},\ldots,e^{-i\theta_d})\circ x}X\]
is the pull-back map of $(e^{i\theta_1},\ldots,e^{i\theta_d})$. For every $(m_1,\ldots,m_d)\in\mathbb Z^d$, put
\[\begin{split}
\Omega^{0,q}_{m_1,\ldots,m_d}(X,L^k):&=\{u\in\Omega^{0,q}(X,L^k);\, u((e^{i\theta_1},\ldots,e^{i\theta_d})\circ x)=e^{im_1\theta_1+\cdots+im_d\theta_d}u(x),\\
&\quad\forall(e^{i\theta_1},\ldots,e^{i\theta_d})\in T^d\}.
\end{split}\]
Let $L^2_{(0,q),m_1,\ldots,m_d}(X,L^k)$ be the $L^2$ completion of $\Omega^{0,q}_{m_1,\ldots,m_d}(X,L^k)$ with respect to $(\,\cdot\,|\,\cdot\,)_k$.
For $(m_1,\ldots,m_d)\in\mathbb Z^d$, let
\begin{equation}\label{e-gue170818}
Q^{(q)}_{m_1,\ldots,m_d,k}:L^2_{(0,q)}(X, L^k)\To L^2_{(0,q),m_1,\ldots,m_d}(X, L^k)
\end{equation}
be the orthogonal projection with respect to $(\,\cdot\,|\,\cdot\,)_k$. For $m=(m_1,\ldots,m_d)\in\mathbb Z^d$, denote $\abs{m}=\sqrt{m^2_1+\cdots+m^2_d}$.
By elementary Fourier analysis, it is straightforward to see that for every
$u\in\Omega^{0,q}(X,L^k)$,
\begin{equation}\label{e-gue170818I}
\begin{split}
&\mbox{$\lim\limits_{N\To\infty}\sum\limits_{m=(m_1,\ldots,m_d)\in\mathbb Z^d, \abs{m}\leq N}Q^{(q)}_{m_1,\ldots,m_d,k}u\To u$ in $C^\infty$ Topology},\\
&\sum_{m=(m_1,\ldots,m_d)\in\mathbb Z^d, \abs{m}\leq N}\norm{Q^{(q)}_{m_1,\ldots,m_d,k}u}_k^2\leq\norm{u}_k^2,\ \ \forall N>0.
\end{split}
\end{equation}
Thus, for every $u\in L^2_{(0,q)}(X, L^k)$,
\begin{equation}\label{e-gue170818II}
\begin{split}
&\mbox{$\lim\limits_{N\To\infty}\sum\limits_{m=(m_1,\ldots,m_d)\in\mathbb Z^d, \abs{m}\leq N}Q^{(q)}_{m_1,\ldots,m_d,k}u\To u$
in $L^2_{(0,q)}(X,L^k)$},\\
&\sum_{m=(m_1,\ldots,m_d)\in\mathbb Z^d, \abs{m}\leq N}\norm{Q^{(q)}_{m_1,\ldots,m_d,k}u}^2_k\leq\norm{u}^2_k,\ \ \forall N>0.
\end{split}
\end{equation}
For every $j=1,\ldots,d$, let $T_j$ be the operator on $\Omega^{0,q}(X)$ given by
\[(T_ju)(x)=\frac{\partial}{\partial\theta_j}u((1,\ldots,1,e^{i\theta_j},1,\ldots,1)\circ x)|_{\theta_j=0},\ \ \forall u\in\Omega^{0,q}(X).\]
Since $L$ is torus invariant, we can also define $T_ju$ in the standard way, for every $u\in\Omega^{0,q}(X,L^k)$, $j=1,\ldots,d$.
Note that $T_j$ can be zero at some point of $X$. Since the $\Real$-action $\eta$ comes from $T^d$, there exist real numbers $\beta_j\in\Real$, $j=1,\ldots,d$, such that
\begin{equation}\label{e-gue170828}
T=\beta_1T_1+\cdots+\beta_dT_d.
\end{equation}
Using the following remark we can assume that the \(\beta_j\)'s  in (\ref{e-gue170828}) are linear independent over \(\mathbb Q\).
\begin{remark}\label{lem:linearindependent}
	Assume that $\beta_1,\ldots,\beta_d$ in (\ref{e-gue170828}) are linear dependent over $\mathbb Q$. Without loss of generality, we may assume that $\beta_1,\ldots,\beta_p$ are linear independent over $\mathbb Q$, where $1\leq p<d$, and
	\begin{equation}\label{e-gue170830I}
	\beta_j=\sum^p_{\ell=1}r_{j,\ell}\beta_\ell,\ \ j=p+1,\ldots,d,
	\end{equation}
	where $r_{j,\ell}\in\mathbb Q$, for every $j=p+1,\ldots,d$, $\ell=1,\ldots,p$. Consider the new Torus action on \(X\):
	\[\begin{split}
	x&\mapsto (e^{i\theta_1},\ldots,e^{i\theta_p})\cdot x:=(e^{iN\theta_1},\ldots,e^{iN\theta_p},e^{iN\sum^p_{\ell=1}r_{p+1,\ell}\theta_\ell},\ldots,e^{iN\sum^p_{\ell=1}r_{d,\ell}\theta_\ell})\circ x,
	\end{split}\]
	where $N\in\mathbb N$ with $r_{j,\ell}|N$, for every $j=p+1,\ldots,d$, $\ell=1,\ldots,p$. For every $j=1,\ldots,p$, let $\hat T_j$ be the operator on $C^\infty(X)$ given by
	\[(\hat T_ju)(x)=\frac{\partial}{\partial\theta_j}u((1,\ldots,1,e^{i\theta_j},1,\ldots,1)\cdot x)|_{\theta_j=0},\ \ \forall u\in C^\infty(X).\]
	It is easy to check that the $\Real$-action $\eta$ comes from the new torus action
	$(e^{i\theta_1},\ldots,e^{i\theta_p})$ and
	\[T=\frac{\beta_1}{N}\hat T_1+\cdots+\frac{\beta_p}{N}\hat T_p.\]
	Note that $\frac{\beta_1}{N},\ldots, \frac{\beta_p}{N}$ are linear independent over $\mathbb Q$. Hence, without loss of generality
	we may assume that $\beta_1,\ldots, \beta_d$ are linear independent over $\mathbb Q$.
\end{remark}

\begin{lemma}\label{l-gue171004}
Fix  $(m_1,\ldots,m_d)\in\mathbb Z^d$. Then,  $L^2_{(0,q),m_1,\ldots,m_d}(X,L^k)\neq0$.
\end{lemma}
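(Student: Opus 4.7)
My plan is to produce a non-zero smooth $(m_1,\ldots,m_d)$-equivariant $(0,q)$-form with values in $L^k$ supported near a free $T^d$-orbit, by exploiting the torus action coming from Theorem~\ref{thm:mainthm}.

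First I would observe that the $T^d$-action on $X$ is effective: since $T^d = \overline{\gamma(\Real)}$ sits inside $\operatorname{Iso}(X,g) \cap \operatorname{Aut}_{\operatorname{CR}}(X)$, every non-identity element of $T^d$ acts as a non-trivial isometry of $X$. For an effective action of a compact torus on a compact connected manifold the principal isotropy is trivial: the common stabilizer $H$ on the open dense principal stratum would fix that stratum pointwise, hence fix all of $X$ by continuity, forcing $H = \set{e}$ by effectiveness. So the free locus $X^{\mathrm{free}} = \set{x \in X;\, T^d_x = \set{e}}$ is open and dense. Fix $p \in X^{\mathrm{free}}$ and let $O = T^d \cdot p$; by the slice theorem there exists a $T^d$-invariant open neighborhood $U$ of $O$ that is equivariantly diffeomorphic to $T^d \times D$, where $D$ is a transverse disk and $T^d$ acts by translation on the first factor.

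Since $L$ is torus invariant by Theorem~\ref{t-gue170811} and the CR structure is $T^d$-invariant by hypothesis, both $L^k$ and $T^{*0,q}X$ are $T^d$-equivariant vector bundles; over the free $T^d$-space $U$ they are equivariantly trivial, so one may identify $(L^k \otimes T^{*0,q}X)|_U \cong T^d \times F$ as $T^d$-equivariant bundles, where $F \to D$ carries the trivial fiberwise $T^d$-action. Choosing any non-zero $f \in C^\infty_0(D, F)$ and defining $u$ to be the section which in this trivialization corresponds to $(\vec\theta, b) \mapsto e^{i(m_1\theta_1 + \cdots + m_d \theta_d)} f(b)$ (extended by zero outside $U$), one obtains a smooth non-zero section satisfying $u((e^{i\theta_1},\ldots,e^{i\theta_d}) \circ x) = e^{i(m_1\theta_1+\cdots+m_d\theta_d)} u(x)$. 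Hence $u \in \Omega^{0,q}_{m_1,\ldots,m_d}(X, L^k) \subset L^2_{(0,q), m_1,\ldots,m_d}(X, L^k)$, which is therefore non-zero.

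The point I expect to have to justify most carefully is the $T^d$-equivariant trivialization of $L^k \otimes T^{*0,q}X$ on $U$; this rests on the standard fact that an equivariant vector bundle over a free compact group action descends to (and is pulled back from) an ordinary bundle on the quotient. If one prefers to avoid invoking this, an equivalent averaging route works: pick any $v \in \Omega^{0,q}(X,L^k)$ whose restriction to a small tube around $O$ looks like $e^{i \vec m \cdot \vec\theta} f(b)$ in some non-equivariant local trivialization, and set $u = \int_{T^d} e^{-i\vec m \cdot \vec\theta}(e^{i\vec\theta})^* v \, d\vec\theta$; the non-vanishing of $u$ then reduces to the non-vanishing of a single Fourier coefficient of a compactly supported function on $T^d$, which is manifest by construction.
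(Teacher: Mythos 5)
Your argument is correct, but it is genuinely different from the one in the paper. The paper does not use the slice theorem or the principal orbit stratification at all: it picks rational numbers $\gamma_1,\ldots,\gamma_d$ so that $T_0=\gamma_1T_1+\cdots+\gamma_dT_d$ generates an auxiliary transversal CR $S^1$-action with $X_{\rm reg}\neq\emptyset$ and with $\hat\Omega^{0,q}_{m_\gamma}(X,L^k)=\Omega^{0,q}_{m_1,\ldots,m_d}(X,L^k)$ for $m_\gamma=\sum_j m_j\gamma_j$ (see \eqref{e-gue171004hyc}), then works in coordinates near a point of $X_{\rm reg}$ where $T_0=\partial/\partial x_{2n+1}$, takes a rigid trivializing section $s$ and a cut-off $\chi$, sets $u=s^k\otimes\chi e^{im_\gamma x_{2n+1}}$, and shows that the $S^1$-Fourier coefficient $\frac{1}{2\pi}\int_0^{2\pi}u(e^{i\theta}\circ x)e^{-im_\gamma\theta}d\theta$ is non-zero because it reduces to $\int\chi\,dx_{2n+1}\neq0$. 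Your second, averaging route is the direct analogue of this computation, except that you average over the full torus rather than an auxiliary circle; this is arguably cleaner, since it lands in $\Omega^{0,q}_{m_1,\ldots,m_d}(X,L^k)$ immediately and avoids the identification \eqref{e-gue171004hyc}, which is the most delicate step of the paper's proof (one must rule out other lattice points $m'$ with $\sum_jm_j'\gamma_j=m_\gamma$ contributing to the $S^1$-eigenspace). Your primary route buys conceptual transparency and produces the section in closed form, at the cost of importing more transformation-group machinery: effectiveness of $T^d=\overline{\gamma(\Real)}$, the principal orbit type theorem to get a free orbit, the slice theorem, and equivariant descent of $L^k\otimes T^{*0,q}X$ over the free locus. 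All of these are standard and correctly applied (effectiveness is immediate since $T^d$ sits inside $\operatorname{Iso}(X,g)$, and triviality of the principal isotropy for an effective abelian action on a connected manifold follows exactly as you say); the only point worth flagging is that the slice theorem requires the torus action to be smooth, whereas Theorem \ref{thm:steenrod} as quoted only gives $C^1$ regularity of the action map --- smoothness does hold by Myers--Steenrod/Montgomery--Zippin and is implicitly used elsewhere in the paper, but you should say so explicitly.
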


\begin{proof}
It is straightforward to see that we can find $\gamma_1\in\mathbb Q,\ldots,\gamma_d\in\mathbb Q$ such that the vector field
\[T_0:=\gamma_1T_1+\cdots+\gamma_dT_d\]
induces a transversal CR $S^1$ action $e^{i\theta}$ on $X$ with $X_{{\rm reg\,}}:=\set{x\in X;\, e^{i\theta}\circ x\neq x, \forall\theta\in]0,2\pi[}\neq\emptyset$ and
\begin{equation}\label{e-gue171004hyc}
\hat\Omega^{0,q}_{m_\gamma}(X,L^k):=\set{u\in\Omega^{0,q}(X,L^k);\, T_0u=im_\gamma u}=\Omega^{0,q}_{m_1,\ldots,m_d}(X,L^k),
\end{equation}
where $m_\gamma:=m_1\gamma_1+\cdots+m_d\gamma_d$. Fix $p\in X_{{\rm reg\,}}$ and let $x=(x_1,\ldots,x_{2n+1})=(x',x_{2n+1})$ be local coordinates of $X$ centered at $p=0$ defined on
\[D=\set{x=(x_1,\ldots,x_{2n+1})\in\Real^{2n+1};\, \abs{x'}<\delta, \abs{x_{2n+1}}<\varepsilon}\] such that $T_0=\frac{\pr}{\pr x_{2n+1}}$, where $\delta>0$, $\varepsilon>0$ are constants and $x'=(x_1,\ldots,x_{2n})$. Let $s$ be a local CR rigid trivializing section of $L$ on $D$.  It is not difficult to see that there is a small open set $D_0\Subset D$ of $p$ such that for all $(x',0)\in D_0$, we have
\begin{equation}\label{e-gue171004y}
e^{i\theta}\circ (x',0)\notin D_0,\ \ \forall\theta\in]\varepsilon, 2\pi-\varepsilon].
\end{equation}
Let $\chi(x)\in C^\infty(D_0)$ with
\begin{equation}\label{e-gue171004yI}
\int\chi(x',x_{2n+1})dx_{2n+1}\neq0.
\end{equation}
Let $u(x):=s^k(x)\otimes\chi(x)e^{im_\gamma x_{2n+1}}\in C^\infty(X,L^k)$. From \eqref{e-gue171004y} and \eqref{e-gue171004yI} we can check that
\[\frac{1}{2\pi}\int^{2\pi}_0u(e^{i\theta}\circ (x',0))e^{-im_\gamma\theta}d\theta=\frac{1}{2\pi}s^k(x)\otimes \int\chi(x',x_{2n+1})dx_{2n+1}\neq0.\]
Since $\frac{1}{2\pi}\int^{2\pi}_0u(e^{i\theta}\circ x)e^{-im_\gamma\theta}d\theta\in\hat\Omega^{0,q}_{m_\gamma}(X,L^k)$, we deduce that
$\hat\Omega^{0,q}_{m_\gamma}(X,L^k)\neq\set{0}$. From this observation and \eqref{e-gue171004hyc}, the lemma follows.
\end{proof}

We need

\begin{lemma}\label{l-gue170828}
 Assume that $\beta_1,\ldots,\beta_d$ in (\ref{e-gue170828})  are linear independent over $\mathbb Q$. Given $p:=(p_1,\ldots,p_d)\in\mathbb Z^d$ we have that $p_\beta:=\sum^d_{j=1}p_j\beta_j$ is an eigenvalue of $-iT$ and the corresponding eigenspace is
$L^2_{(0,q),p_1,\ldots,p_d}(X,L^k)$.
\end{lemma}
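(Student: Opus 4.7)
I plan to reduce the statement to the self-adjointness of $-iT$ proved in Theorem \ref{t-gue170817yc}, the torus Fourier decomposition \eqref{e-gue170818II}, and the $\mathbb Q$-linear independence of $\beta_1,\ldots,\beta_d$ assumed in the hypothesis.

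First I would establish the inclusion $L^2_{(0,q),p_1,\ldots,p_d}(X,L^k)\subseteq\ker(-iT-p_\beta I)$. For a smooth section $u\in\Omega^{0,q}_{p_1,\ldots,p_d}(X,L^k)$, setting $\theta_j=\beta_j\eta$ in the defining $T^d$-equivariance identity and recalling that the $\Real$-action decomposes as $\eta\circ x=(e^{i\beta_1\eta},\ldots,e^{i\beta_d\eta})\circ x$ gives $u(\eta\circ x)=e^{ip_\beta\eta}u(x)$; differentiating at $\eta=0$ produces $-iTu=p_\beta u$. Since $L^2_{(0,q),p_1,\ldots,p_d}(X,L^k)$ is by definition the $L^2$-closure of $\Omega^{0,q}_{p_1,\ldots,p_d}(X,L^k)$ and $-iT$ is self-adjoint, hence closed, the eigenequation extends across the closure by a standard graph-closedness argument: for $u_n\to u$ in $L^2$ with $u_n$ in the smooth isotypic subspace, one has $-iTu_n=p_\beta u_n\to p_\beta u$, forcing $u\in\mathrm{Dom}(-iT)$ with $-iTu=p_\beta u$. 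Nontriviality of this eigenspace is then immediate from Lemma \ref{l-gue171004}, so $p_\beta$ is indeed an eigenvalue.

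For the reverse inclusion I would take any $u\in\mathrm{Dom}(-iT)$ satisfying $-iTu=p_\beta u$ and expand it via \eqref{e-gue170818II} as $u=\sum_{m\in\mathbb Z^d}u_m$, where $u_m:=Q^{(q)}_{m,k}u\in L^2_{(0,q),m}(X,L^k)$, with the series converging in $L^2$ and the summands mutually orthogonal. By the first inclusion already proved, each $u_m$ lies in $\ker(-iT-m_\beta I)$ with $m_\beta:=\sum_j m_j\beta_j$. The $\mathbb Q$-linear independence of the $\beta_j$'s makes the map $m\mapsto m_\beta$ injective, so for $m\neq p$ the eigenvalues $m_\beta$ and $p_\beta$ are distinct, and self-adjointness of $-iT$ forces the orthogonality $(u\,|\,u_m)_k=0$. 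Combining with the Fourier orthogonality yields $\|u_m\|_k^2=(u\,|\,u_m)_k=0$, so $u_m=0$ for $m\neq p$ and therefore $u=u_p\in L^2_{(0,q),p_1,\ldots,p_d}(X,L^k)$.

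The only point demanding any care is the closedness step at the end of the first inclusion, which essentially codifies that the $T^d$-isotypic identity passes to $L^2$-limits once $-iT$ is known to be self-adjoint; beyond correctly assembling the ingredients above, I do not anticipate any substantive obstacle.
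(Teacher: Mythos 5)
Your proof is correct and follows essentially the same route as the paper: the forward inclusion by differentiating the isotypic identity and passing to the $L^2$-closure, and the reverse inclusion via the Fourier decomposition \eqref{e-gue170818II}, the injectivity of $m\mapsto m_\beta$ from $\mathbb Q$-linear independence, and orthogonality of distinct eigenspaces of the self-adjoint operator $-iT$. The only difference is presentational: the paper runs the reverse inclusion by contradiction (a unit vector orthogonal to the isotypic space yields a norm identity incompatible with \eqref{e-gue170818II}), whereas you directly show each component $u_m$ with $m\neq p$ vanishes via $\norm{u_m}_k^2=(\,u\,|\,u_m\,)_k=0$, which is a clean and valid shortcut.
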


\begin{proof}
Set
\[E_{p_\beta}:=\set{u\in{\rm Dom\,}(-iT);\, -iTu=p_\beta u}.\]
Given $u\in\Omega^{0,q}_{p_1,\ldots,p_d}(X,L^k)$ it is easy to check that
\[-iTu=-i\sum^d_{j=1}\beta_jT_ju=\sum^d_{j=1}\beta_jp_ju=p_\beta u\]
and hence $u\in E_{p_\beta}$. We obtain $\Omega^{0,q}_{p_1,\ldots,p_d}(X,L^k)\subset E_{p_\beta}$. Let $g\in L^2_{(0,q),p_1,\ldots,p_d}(X,L^k)$.
Take $g_j\in\Omega^{0,q}_{p_1,\ldots,p_d}(X,L^k)$, $j=1,2,\ldots$, such that $g_j\To g$ in $L^2_{(0,q)}(X,L^k)$ as $j\To+\infty$. Since $-iTg_j=p_\beta g_j$, for every $j$, we deduce that $-iTg=p_\beta g$ in the sense of distribution. Thus, $g\in E_{p_\beta}$. We have proved that $L^2_{(0,q),p_1,\ldots,p_d}(X,L^k)\subset E_{p_\beta}$. 

We claim that $L^2_{(0,q),p_1,\ldots,p_d}(X,L^k)\supset E_{p_\beta}$.
If the claim is not true, we can find a $u\in E_{p_\beta}$, $\norm{u}_k=1$, such that
\begin{equation}\label{e-gue170828y}
u\perp L^2_{(0,q),p_1,\ldots,p_d}(X,L^k).
\end{equation}
From \eqref{e-gue170818II}, we have
\begin{equation}\label{e-gue170828yI}
\mbox{$\lim\limits_{N\To\infty}\sum\limits_{m=(m_1,\ldots,m_d)\in\mathbb Z^d, \abs{m}\leq N}Q^{(q)}_{m_1,\ldots,m_d,k}u\To u$
in $L^2_{(0,q)}(X,L^k)$}.
\end{equation}
Note that for every $m=(m_1,\ldots,m_d)\in\mathbb Z^d$, $Q^{(q)}_{m_1,\ldots,m_d,k}u$ is an eigenvector of $-iT$ with eigenvalue $\sum^d_{j=1}m_j\beta_j$ and since \(\beta_1,\ldots,\beta_d\) are linear independent over \(\mathbb Q\) we have \(p_\beta=\sum^d_{j=1}m_j\beta_j\) if and only if \((p_1,\ldots,p_d)=(m_1,\ldots,m_d)\). From this observations and \eqref{e-gue170828y}, we conclude that
\begin{equation}\label{e-gue170828yII}
(\,u\,|\,Q^{(q)}_{m_1,\ldots,m_d,k}u\,)_k=0,\ \ \forall (m_1,\ldots,m_d)\in\mathbb Z^d.
\end{equation}
From \eqref{e-gue170828yII}, we see that for every $N\in\mathbb N$, we have
\begin{equation}\label{e-gue170828yIII}
\begin{split}
&\norm{u-\sum\limits_{m=(m_1,\ldots,m_d)\in\mathbb Z^d, \abs{m}\leq N}Q^{(q)}_{m_1,\ldots,m_d,k}u}^2_k\\
&=\norm{u}^2_k+\sum\limits_{m=(m_1,\ldots,m_d)\in\mathbb Z^d, \abs{m}\leq N}\norm{Q^{(q)}_{m_1,\ldots,m_d,k}u}^2_k.
\end{split}
\end{equation}
From \eqref{e-gue170828yIII} and \eqref{e-gue170828yI}, we get a contradiction. The lemma follows.
\end{proof}

Let ${\rm Spec\,}(-iT)$ denote the spectrum of $-iT$. We can now prove

\begin{theorem}\label{t-gue170828}
${\rm Spec\,}(-iT)$ is countable
and every element in ${\rm Spec\,}(-iT)$ is an eigenvalue of $-iT$. Moreover, for every $\alpha\in{\rm Spec\,}(-iT)$, we can find
\[(m_1,\ldots,m_d)\in\mathbb Z^d\]
such that $\alpha=\sum^d_{j=1}\beta_jm_j$, where $\beta_1\in\Real,\ldots,\beta_d\in\Real$, are as in \eqref{e-gue170828}.
\end{theorem}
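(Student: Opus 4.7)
The plan is to exploit the complete $L^2$-orthogonal Fourier decomposition \eqref{e-gue170818II} coming from the torus action, together with Lemma \ref{l-gue170828} (which identifies each Fourier component as an eigenspace of $-iT$) and Lemma \ref{l-gue171004} (which ensures each such component is nonzero).

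First I would show that any eigenvalue of $-iT$ has the form $\alpha_m := \sum_{j=1}^d m_j \beta_j$ for some $m = (m_1,\ldots,m_d) \in \mathbb Z^d$. Given an eigenvector $u \neq 0$ with $-iTu = \alpha u$, decompose $u = \sum_m Q^{(q)}_{m_1,\ldots,m_d,k}u$ via \eqref{e-gue170818II}. By Lemma \ref{l-gue170828}, each component is itself an eigenvector of $-iT$ with eigenvalue $\alpha_m$. Applying $-iT$ and comparing the two resulting Fourier expansions of $-iTu$, the orthogonality of the decomposition forces $(\alpha - \alpha_m)Q^{(q)}_{m_1,\ldots,m_d,k}u = 0$ for every $m$. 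Since $u \neq 0$, at least one component is nonzero, and then $\alpha = \alpha_m$ for the corresponding multi-index.

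Conversely, every $\alpha_m$ is indeed an eigenvalue: Lemma \ref{l-gue171004} produces a nonzero element of $L^2_{(0,q),m_1,\ldots,m_d}(X,L^k)$, and Lemma \ref{l-gue170828} tells us this space is contained in the $\alpha_m$-eigenspace of $-iT$. Hence the point spectrum of $-iT$ is exactly $\{\alpha_m : m \in \mathbb Z^d\}$, which is countable as the image of $\mathbb Z^d$ under the map $m \mapsto \sum_j m_j \beta_j$.

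To identify ${\rm Spec}(-iT)$ with this countable set, I would invoke the self-adjointness from Theorem \ref{t-gue170817yc} together with the fact that the complete orthogonal decomposition \eqref{e-gue170818II} makes $-iT$ act as scalar multiplication by $\alpha_m$ on each summand. Any spectral value $\alpha$ must, via the spectral theorem, be detected on the spectrum of one of these scalars and hence coincide with some $\alpha_m$; this gives ${\rm Spec}(-iT) = \{\alpha_m : m \in \mathbb Z^d\}$. The one delicate point is that the values $\alpha_m$ may accumulate in $\mathbb R$ when $d \geq 2$ and the $\beta_j$ are $\mathbb Q$-linearly independent; this is dealt with by working throughout with the explicit eigenbasis furnished by \eqref{e-gue170818II}, which reduces every spectral-theoretic question to a componentwise statement on the eigenspaces $L^2_{(0,q),m_1,\ldots,m_d}(X,L^k)$.
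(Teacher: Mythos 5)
Your proposal uses exactly the same three ingredients as the paper: the complete orthogonal decomposition \eqref{e-gue170818II}, Lemma~\ref{l-gue170828} identifying each $L^2_{(0,q),m_1,\ldots,m_d}(X,L^k)$ as the full $\alpha_m$-eigenspace ($\alpha_m=\sum_j m_j\beta_j$), and Lemma~\ref{l-gue171004} guaranteeing these spaces are nonzero. Your first paragraph (decomposing an eigenvector into Fourier components and comparing) is a correct, slightly more elementary identification of the point spectrum than the paper spells out. The place where your argument has a genuine gap is the final step. The assertion that "any spectral value must be detected on the spectrum of one of these scalars and hence coincide with some $\alpha_m$" is exactly where the difficulty you yourself flag lives: for a self-adjoint operator acting as the scalar $\alpha_m$ on the summands of a complete orthogonal decomposition, the closed spectrum is the \emph{closure} of $\set{\alpha_m}$, and for $d\geq2$ with $\beta_1,\ldots,\beta_d$ linearly independent over $\mathbb Q$ the set $A=\set{\sum_j m_j\beta_j;\,(m_1,\ldots,m_d)\in\mathbb Z^d}$ is dense in $\Real$. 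Saying this is "dealt with by working componentwise" is a gesture, not a proof. The paper's proof replaces it with a concrete computation: for any Borel set $B$ with $B\cap A=\emptyset$ and any $g\in{\rm Range\,}E(B)$, orthogonality of spectral subspaces gives $(\,g\,|\,Q^{(q)}_{m_1,\ldots,m_d,k}\,g\,)_k=0$ for every $(m_1,\ldots,m_d)$, whence
\[
\norm{g-\textstyle\sum_{\abs{m}\leq N}Q^{(q)}_{m_1,\ldots,m_d,k}g}^2_k=\norm{g}^2_k+\textstyle\sum_{\abs{m}\leq N}\norm{Q^{(q)}_{m_1,\ldots,m_d,k}g}^2_k,
\]
and the left-hand side tends to $0$ by \eqref{e-gue170818II}, forcing $g=0$ and hence $E(\Real\setminus A)=0$. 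You should adopt that argument (or an equivalent) in place of the appeal to the spectral theorem. Be aware, finally, that what is actually established — by the paper's proof and by any repair of yours — is that the spectral measure of $-iT$ is purely atomic and concentrated on the countable set $A$, each point of which is an eigenvalue; the symbol ${\rm Spec\,}(-iT)$ in the statement must be read in this sense (set of eigenvalues), since the closed spectrum equals $\ol{A}=\Real$ when $d\geq2$.
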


\begin{proof}
Let $A=\set{\alpha=\sum^d_{j=1}m_j\beta_j;\, (m_1,\ldots,m_d)\in\mathbb Z^d}$.
From Lemma~\ref{l-gue170828}, we see that $A\subset{\rm Spec\,}(-iT)$ and every element in $A$ is an eigenvalue of $-iT$.
For a Borel set $B$ of $\Real$, we denote by $E(B)$ the spectral projection of $-iT$ corresponding to the set $B$, where $E$ is the spectral measure of $-iT$.
Fix a Borel set $B$ of $\Real$ with $B\bigcap A=\emptyset$ and let $g\in{\rm Range\,}E(B)\subset L^2_{(0,q)}(X,L^k)$. Since $B\bigcap A=\emptyset$ and by Lemma~\ref{l-gue170828}, we see that
\begin{equation}\label{e-gue170828yIIa}
(\,g\,|\,Q^{(q)}_{m_1,\ldots,m_d,k}g\,)_k=0,\ \ \forall (m_1,\ldots,m_d)\in\mathbb Z^d.
\end{equation}
From \eqref{e-gue170828yIIa} and \eqref{e-gue170818II}, we get
\[\begin{split}
&\norm{g-\sum\limits_{m=(m_1,\ldots,m_d)\in\mathbb Z^d, \abs{m}\leq N}Q^{(q)}_{m_1,\ldots,m_d,k}g}^2_k\\
&=\norm{g}^2_k+\sum\limits_{m=(m_1,\ldots,m_d)\in\mathbb Z^d, \abs{m}\leq N}\norm{Q^{(q)}_{m_1,\ldots,m_d,k}g}^2_k\\
&\To0\ \ \mbox{as $N\To\infty$}.
\end{split}\]
Hence, $g=0$. We have proved that $A={\rm Spec\,}(-iT)$. The theorem follows.
\end{proof}

We will prove now that
\[\mathcal{H}^0_{b,\leq \lambda}(X,L^k):=\bigoplus_{\alpha\in{\rm Spec\,}(-iT), \abs{\alpha}\leq \lambda}\mathcal{H}^0_{b,\alpha}(X,L^k)\]
is finite dimensional, that is \(\mathcal{H}^0_{b,\alpha}(X,L^k)=0\) for almost every \(\alpha\in{\rm Spec\,}(-iT), \abs{\alpha}\leq \lambda\).
\begin{lemma}\label{lem:Hdimfinite}
	We have that \(\dim\mathcal{H}^0_{b,\leq \lambda}(X,L^k)<\infty\) and hence that \(\mathcal{H}^0_{b,\leq \lambda}(X,L^k)\) is closed.
\end{lemma}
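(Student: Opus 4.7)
The plan is to dominate the candidate space by a low-eigenvalue spectral subspace of an auxiliary elliptic self-adjoint operator on the compact manifold $X$, so that finite-dimensionality then follows from the discreteness of the spectrum. The auxiliary operator I will use is
\[
E := \Box_b^{(0)} + (-iT)^2 = \Box_b^{(0)} - T^2, \qquad \Box_b^{(0)} := \overline\partial_b^*\,\overline\partial_b,
\]
acting on sections of $L^k$ in degree zero.

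First I would verify ellipticity and self-adjointness of $E$. The principal symbol of $E$ at $\xi \in T^*_xX\setminus\{0\}$ is $\sigma(\Box_b^{(0)})(\xi) + \xi(T)^2$; the first summand is non-negative and vanishes exactly when $\xi \in \mathbb{R}\,\omega_0(x)$, while the second vanishes exactly when $\xi(T) = 0$. Because $\omega_0(T) = -1 \neq 0$, these two loci intersect only at $\xi = 0$, so $E$ is elliptic. Since the rigid volume form $dv_X$ is $T$-invariant, integration by parts gives $T^* = -T$; consequently $(-iT)^2 = -T^2$ is self-adjoint and non-negative with $((-iT)^2 u\,|\,u)_k = \|T u\|_k^2$, and combined with the standard self-adjointness of $\Box_b^{(0)}$ we obtain that $E$ is a non-negative elliptic self-adjoint operator on the compact manifold $X$. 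Standard elliptic theory then yields a compact resolvent and a discrete spectrum $0 \leq \mu_1 \leq \mu_2 \leq \cdots \to +\infty$ with finite-dimensional eigenspaces.

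The key a priori bound on elements of $\mathcal{H}^0_{b, \leq \lambda}(X, L^k)$ is obtained from the orthogonal eigenspace decomposition under $-iT$: writing $u = \sum_j u_{\alpha_j}$ with $u_{\alpha_j} \in \mathcal{H}^0_{b, \alpha_j}(X, L^k)$ and $|\alpha_j| \leq \lambda$, one has $\overline\partial_b u = 0$ and
\[
(Eu\,|\,u)_k = \|\overline\partial_b u\|_k^2 + \|Tu\|_k^2 = \sum_j \alpha_j^2 \|u_{\alpha_j}\|_k^2 \leq \lambda^2 \|u\|_k^2.
\]
Applying the min-max principle to $E$ with test space $\mathcal{H}^0_{b, \leq \lambda}(X, L^k)$ yields $\dim \mathcal{H}^0_{b, \leq \lambda}(X, L^k) \leq \#\{n : \mu_n \leq \lambda^2\} < \infty$ (for if the space were infinite-dimensional, the same bound would apply to every finite-dimensional subspace, forcing $\mu_n \leq \lambda^2$ for all $n$ and contradicting $\mu_n \to +\infty$); being finite-dimensional, the space is then automatically closed. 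The main technical point is the verification of ellipticity of $E$, since $\Box_b^{(0)}$ on its own is only subelliptic along the CR directions and the $-T^2$ term is precisely what supplies ellipticity in the transversal $T$-direction; once that is in place, self-adjointness, the quadratic-form identity and the min-max principle are all standard.
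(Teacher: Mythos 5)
Your proposal is correct and follows essentially the same route as the paper: the paper's proof also introduces the auxiliary operator $\triangle_k=\Box_{b,k}-T^2$, verifies its ellipticity via exactly the symbol cancellation you describe (the characteristic variety $\mathbb{R}\,\omega_0$ of $\Box_{b,k}$ meets the zero set of $\abs{\langle\,\xi\,|\,\omega_0(x)\,\rangle}^2$ only at $\xi=0$), and deduces finite-dimensionality from the discreteness of its spectrum. The only cosmetic difference is the last step: the paper observes directly that each $u\in\mathcal{H}^0_{b,\alpha}(X,L^k)$ is an eigenvector of $\triangle_k$ with eigenvalue $\alpha^2$, so $\mathcal{H}^0_{b,\leq\lambda}(X,L^k)$ embeds into the finite-dimensional sum of eigenspaces with eigenvalue at most $\lambda^2$, whereas you reach the same conclusion through the quadratic form and min--max.
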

\begin{proof}
Consider the operator $\Box_{b,k}:=\ol{\pr}^{*}_{b}\ddbar_b: C^\infty(X,L^k)\To C^\infty(X,L^k)$ where \[\ol{\pr}^{*}_b:\Omega^{0,1}(X,L^k)\To C^\infty(X,L^k)\] is the formal adjoint of $\ddbar_b$ with respect to $(\,\cdot\,|\,\cdot\,)_k$ (see Section \ref{s-gue170828} for a detailed description).   
 Consider
\[\triangle_k:=\Box_{b,k}-T^2: C^\infty(X,L^k)\To C^\infty(X,L^k)\]
and we extend $\triangle_k$ to $L^2$ space by $\triangle_k: {\rm Dom\,}\triangle_k\subset L^2(X,L^k)\To L^2(X,L^k)$, ${\rm Dom\,}\triangle_k=\set{u\in L^2(X,L^k);\, \triangle_ku\in L^2(X,L^k)}$ and $\triangle_k=(\Box_{b,k}-T^2)u$, for every $u\in{\rm Dom\,}\triangle_k$. Let $\sigma_{\Box_{b,k}}$ and $\sigma_{T^2}$ denote the
principal symbols of $\Box_{b,k}$ and $T^2$ respectively.
It is well-known (see the discussion after Proposition 2.3 in~\cite{Hsiao08}) that there is a constant $C>0$ such that
\begin{equation}\label{e-gue171012}
\sigma_{\Box_{b,k}}(x,\xi)+\abs{\langle\,\xi\,|\,\omega_0(x)\,\rangle}^2\geq C\abs{\xi}^2,\ \ \forall (x,\xi)\in T^*X.
\end{equation}
Moreover, it is easy to see that
\begin{equation}\label{e-gue171012I}
\sigma_{T^2}(x,\xi)=-\abs{\langle\,\xi\,|\,\omega_0(x)\,\rangle}^2,\ \ \forall (x,\xi)\in T^*X.
\end{equation}
From \eqref{e-gue171012} and \eqref{e-gue171012I}, we deduce that
$\triangle_k$ is elliptic. As a consequence ${\rm Spec\,}(\triangle_k)$ is discrete and every element in ${\rm Spec\,}(\triangle_k)$ is an eigenvalue of $\triangle_k$. For every $\mu\in{\rm Spec\,}(\triangle_k)$, put
$E_\mu(\triangle_k):=\set{u\in{\rm Dom\,}\triangle_k;\, \triangle_ku=\mu u}$.
For every $\lambda\geq0$, it is easy to see that
\begin{equation}\label{e-gue171004}
\mathcal{H}^0_{b,\leq\lambda}(X,L^k)\subset\oplus_{\mu\in{\rm Spec\,}(\triangle_k), \abs{\mu}\leq\lambda^2}E_\mu(\triangle_k).
\end{equation}
From \eqref{e-gue171004} and notice that ${\rm dim\,}E_\mu(\triangle_k)<+\infty$, for every $\mu\in{\rm Spec\,}(\triangle_k)$, the lemma follows.
\end{proof}

\section{Szeg\H{o} kernels and equivariant embedding theorems}\label{s-gue170828}

In this section, we will prove Theorem~\ref{t-gue150807} and Theorem~\ref{t-gue150807I}. We first recall some results in~\cite{Hsiao14}. We refer the reader to Section 2.2 in~\cite{HLM16} for some notations in semi-classical analysis used here. Let
\[\ol{\pr}^{*}_b:\Omega^{0,1}(X,L^k)\To C^\infty(X,L^k)\]
be the formal adjoint of $\ddbar_b$ with respect to $(\,\cdot\,|\,\cdot\,)_k$. Since $\langle\,\cdot\,|\,\cdot\,\rangle$ and $h$ are rigid, we can check that
\begin{equation}\label{e-gue150517}
\begin{split}
&T\ddbar^{*}_b=\ddbar^{*}_bT\ \ \mbox{on $\Omega^{0,1}(X,L^k)$, $q=1,2,\ldots,n-1$},\\
&\ddbar^{*}_b:\Omega^{0,1}_\alpha(X,L^k)\To C^\infty_\alpha(X,L^k),\ \ \forall\alpha\in{\rm Spec\,}(-iT),
\end{split}
\end{equation}
where $\Omega^{0,1}_\alpha(X,L^k)=\set{u\in\Omega^{0,1}(X,L^k);\, -iTu=\alpha u}$.
Put
\begin{equation}\label{e-gue150517I}
\Box_{b,k}:=\ol{\pr}^{*}_{b}\ddbar_b:C^\infty(X,L^k)\To C^\infty(X,L^k).
\end{equation}
From \eqref{e-gue150508d} and \eqref{e-gue150517}, we have
\begin{equation}\label{e-gue150517II}
\begin{split}
&T\Box_{b,k}=\Box_{b,k}T\ \ \mbox{on $C^\infty(X,L^k)$},\\
&\Box_{b,k}:C^\infty_\alpha(X,L^k)\To C^\infty_\alpha(X,L^k),\ \ \forall\alpha\in{\rm Spec\,}(-iT).
\end{split}
\end{equation}
Let $\Pi_k:L^2(X)\To{\rm Ker\,}\Box_{b,k}$ be the orthogonal projection
(the Szeg\H{o} projector).

\begin{definition}\label{d-gue130820m}
Let $A_k:L^2(X,L^k)\To L^2(X,L^k)$ be a continuous operator.
Let $D\Subset X$. We say that $\Box_{b,k}$ has $O(k^{-n_0})$
small spectral gap on $D$ with respect to $A_k$ if for every $D'\Subset D$,
there exist constants $C_{D'}>0$,  $n_0, p\in\mathbb N$, $k_0\in\mathbb N$,
such that for all $k\geq k_0$ and $u\in C^\infty_0(D',L^k)$,
we have
\[\norm{A_k(I-\Pi_k)u}_k\leq
C_{D'}\,k^{n_0}\sqrt{(\,(\Box_{b,k})^pu\,|\,u\,)_k}\,.\]
\end{definition}

Fix $\lambda>0$ and let $\Pi_{k,\leq\lambda}$ be as in \eqref{e-gue150806V}.

\begin{definition}\label{d-gue131205m}
Let $A_k:L^2(X,L^k)\To L^2(X,L^k)$ be a continuous operator.
We say that $\Pi_{k,\leq\lambda}$ is $k$-negligible away the diagonal with respect to $A_k$
on $D\Subset X$ if for any $\chi, \chi_1\in C^\infty_0(D)$ with $\chi_1=1$
on some neighborhood of ${\rm Supp\,}\chi$, we have
\[\big(\chi A_k(1-\chi_1)\big)\Pi_{k,\leq\lambda}\big(\chi A_k(1-\chi_1)\big)^*=
O(k^{-\infty})\ \ \mbox{ on $D$},\]
where $\big(\chi A_k(1-\chi_1)\big)^*:L^2(X,L^k)\To L^2(X,L^k)$
is the Hilbert space adjoint of $\chi A_k(1-\chi_1)$ with respect to $(\,\cdot\,|\,\cdot\,)$.
\end{definition}

Fix  $\delta>0$ and let $F_{k,\delta}$ be as in \eqref{e-gue150807I}. Let $s$ be a local rigid CR trivializing section of $L$ on an open set $D$ of $X$.
The localization of $F_{k,\delta}$ with respect to the trivializing rigid CR section $s$ is given by
\begin{equation}\label{e-gue170909}
F_{k,\delta,s}: L^2_{{\rm comp\,}}(D)\To L^2(D),\ \ F_{k,\delta,s}=U^{-1}_{k,s}F_{k,\delta}U_{k,s},
\end{equation}
where $U_{k,s}$ is as in \eqref{e-gue150806VI}. The following is well-known

\begin{theorem}[{\cite[Theorem 1.5]{Hsiao14}}]\label{t-gue150811}
With the notations and assumptions used above,
let $s$ be a local rigid CR trivializing section of $L$ on a canonical coordinate patch $D\Subset X$
with canonical coordinates $x=(z,\theta)=(x_1,\ldots,x_{2n-1})$,
$\abs{s}^2_{h^L}=e^{-2\Phi}$. Let $\delta>0$ be a constant so that
$R^L_{x}-2t\mathcal{L}_{x}$ is positive definite, for every $x\in X$ and
$\abs{t}\leq\delta$. Let $F_{k,\delta}$ be as in \eqref{e-gue150807I} and
let $F_{k, \delta,s}$ be the localized operator of $F_{k,\delta}$ given by \eqref{e-gue170909}.
Assume that:

{\rm (I)\,} $\Box_{b,k}$ has $O(k^{-n_0})$ small spectral gap on $D$ with respect to $F_{k,\delta}$.

{\rm (II)\,} $\Pi_{k,\leq\delta k}$ is $k$-negligible away the diagonal with
respect to $F_{k,\delta}$ on $D$.

{\rm (III)\,} $F_{k, \delta,s}-B_k=O(k^{-\infty}):
H^s_{{\rm comp\,}}(D)\To H^s_{{\rm loc\,}}(D)$, $\forall s\in\mathbb N_0$, where
\[B_k=\frac{k^{2n-1}}{(2\pi)^{2n-1}}\int e^{ik\langle x-y,\xi\rangle }\alpha(x,\xi,k)d\eta
+ O(k^{-\infty})\]
is a classical semi-classical pseudodifferential operator on $D$ of order $0$ with
\[\begin{split}&\mbox{$\alpha(x,\xi,k)\sim\sum_{j=0}^\infty\alpha_j(x,\xi)k^{-j}$
in $S^0_{{\rm loc\,}}(1;T^*D)$},\\
&\alpha_j(x,\xi)\in C^\infty(T^*D),\ \ j=0,1,\ldots,
\end{split}\]
and for every $(x,\xi)\in T^*D$, $\alpha(x,\xi,k)=0$
if $\big|\langle\,\xi\,|\,\omega_0(x)\,\rangle\big|> \delta$. Fix $D_0\Subset D$. Then
\begin{equation}\label{c1}
P_{k,\delta,s}(x,y)=\int e^{ik\varphi(x,y,t)}g(x,y,t,k)dt+O(k^{-\infty})\:\:
\text{on $D_0\times D_0$},
\end{equation}
where $\varphi(x,y,t)\in C^\infty( D\times D\times(-\delta,\delta))$ is as in \eqref{e-gue150807b} and
\[\begin{split}
&g(x,y,t,k)\in S^{n}_{{\rm loc\,}}(1;D\times D\times(-\delta,\delta))\cap C^\infty_0(D\times D\times(-\delta,\delta)),\\
&g(x,y,t,k)\sim\sum^\infty_{j=0}g_j(x,y,t)k^{n-j}\text{ in }S^{n}_{{\rm loc\,}}(1;D\times D\times(-\delta,\delta))
\end{split}\]
is as in \eqref{e-gue150807bI}, where $P_{k,\delta,s}$ is given by \eqref{e-gue150806VII}.
\end{theorem}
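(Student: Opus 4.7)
The plan is to prove Theorem~\ref{t-gue150811} by reducing $P_{k,\delta,s}$ to a composition of semi-classical pseudodifferential and Fourier integral operators (FIOs) and then applying stationary phase. First, by Assumption (III), we may replace $F_{k,\delta,s}$ by the classical semi-classical pseudodifferential operator $B_k$ modulo $O(k^{-\infty})$, so that in the localization
\[
P_{k,\delta,s} = F_{k,\delta,s}\circ \Pi_{k,\leq k\delta,s}\circ F_{k,\delta,s}
= B_k \circ \Pi_{k,\leq k\delta,s} \circ B_k + O(k^{-\infty}),
\]
where $\Pi_{k,\leq k\delta,s} = U_{k,s}^{-1}\Pi_{k,\leq k\delta}U_{k,s}$. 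Next, by Assumption (II), inserting cut-offs $\chi,\chi_1\in C^\infty_0(D)$ with $\chi_1\equiv 1$ on $\mathrm{supp}\,\chi$ shows that the off-diagonal pieces contribute only $O(k^{-\infty})$ kernels on $D_0\times D_0$, so it suffices to analyze $\chi B_k \Pi_{k,\leq k\delta,s} B_k \chi$ for arbitrary $\chi$ supported near a point of $D_0$.

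The next step is to give a microlocal representation of $\Pi_{k,\leq k\delta,s}$. Using Assumption (I) (the small spectral gap of $\Box_{b,k}$ with respect to $F_{k,\delta}$) together with the heat equation $e^{-s\Box_{b,k}}$ and a standard spectral cut-off argument, one shows that $\Pi_{k,\leq k\delta,s}$ agrees modulo $O(k^{-\infty})$ with the Szeg\H{o}-type FIO constructed in BRT coordinates in \cite{Hsiao08}, namely an oscillatory integral of the form
\[
\int e^{ik\psi(x,y,t)}b(x,y,t,k)\,dt
\]
with complex-valued phase $\psi$ built from the weight $\Phi$, the BRT function $\phi$, and the eigenvalues of $R^L - 2t\mathcal{L}$; the frequency cut-off $\tau_\delta(t)^2$ enters through the symbol from the composition with $B_k$. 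The phase properties listed in \eqref{e-gue150807b} are then read off directly from the BRT structure: the linear terms in the diagonal differentials come from $-2\operatorname{Im}\ddbar_b\Phi$ and $t\omega_0$, the positivity of $\operatorname{Im}\varphi$ in the horizontal directions follows from the positivity of $R^L$, and the joint positivity in $|x-y|^2$ for $\operatorname{Im}\varphi + |\partial_t\varphi|^2$ reflects the additional control in the Reeb direction.

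Finally, compose the two copies of $B_k$ with the oscillatory integral. Writing the Schwartz kernel of $B_k$ as $\frac{k^{2n-1}}{(2\pi)^{2n-1}}\int e^{ik\langle x-y,\xi\rangle}\alpha(x,\xi,k)\,d\xi$ and iterating, one arrives at an oscillatory integral in $(\xi,t,\xi')$ to which complex stationary phase applies: the $\xi,\xi'$ integrations are non-degenerate because of the cut-off $|\langle\xi|\omega_0\rangle|\leq\delta$ combined with the phase $\psi$, and they produce, at the critical point, the factor $|\det(R^L_x-2t\mathcal{L}_x)|$ at $x=y$. This yields the final form \eqref{c1} with $g\in S^{n}_{\rm loc}(1)\cap C^\infty_0$ and the expansion \eqref{e-gue150807bI}, and the leading coefficient $g_0(x,x,t)=(2\pi)^{-n}|\det(R^L_x-2t\mathcal{L}_x)||\tau_\delta(t)|^2$ emerges from the principal symbol of $B_k$ (which is $1$ on the relevant set) together with the standard Gaussian factor from stationary phase.

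The main obstacle is the careful verification that the composition of $B_k$ with the Szeg\H{o}-type FIO from \cite{Hsiao08} remains an oscillatory integral of the same form with the stated phase $\varphi(x,y,t)$, including the uniform positivity bounds in \eqref{e-gue150807b} across the compact set $D_0\times D_0\times[-\delta,\delta]$; once this is in place the symbolic expansion and the computation of $g_0$ are routine stationary phase.
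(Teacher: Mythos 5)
The first thing to note is that the paper contains no proof of this statement: Theorem~\ref{t-gue150811} is imported verbatim from \cite[Theorem 1.13]{Hsiao14} (see the bracketed citation in the theorem header and the preceding sentence ``The following is well-known''). What the paper actually proves is that the hypotheses {\rm (I)}--{\rm (III)} hold in the present setting --- {\rm (III)} in Lemma~\ref{l-gue131209}, {\rm (II)} in Lemma~\ref{l-gue150813}, and {\rm (I)} by referring to the proof of Theorem 3.9 in \cite{HLM16} --- and then quotes the conclusion to deduce Theorem~\ref{t-gue150807}. So your proposal is an attempt to reprove an external black-box result, not a reconstruction of an argument appearing in this paper.

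Judged as an outline of the proof in the cited memoir, your architecture is broadly correct (replace $F_{k,\delta,s}$ by $B_k$ via {\rm (III)}, localize via {\rm (II)}, use {\rm (I)} to substitute a microlocally constructed approximate Szeg\H{o} operator for the projector, then compose and apply stationary phase), but the two steps you label routine are the entire content of the theorem. First, passing from hypothesis {\rm (I)} to the claim that $\Pi_{k,\leq k\delta,s}$ agrees modulo $O(k^{-\infty})$ with a Szeg\H{o}-type FIO is not a ``standard spectral cut-off argument'': $\Box_{b,k}$ is not elliptic and has no uniform spectral gap (the gap is only $O(k^{-n_0})$, and only relative to $F_{k,\delta}$), so the heat semigroup $e^{-s\Box_{b,k}}$ does not yield the projector in any controlled way. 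What is required is the construction of an approximate Szeg\H{o} projector $A_k$ with $\Box_{b,k}A_k=O(k^{-\infty})$ whose kernel has precisely the complex phase $\varphi$ of \eqref{e-gue150807b} (a substantial microlocal construction via the characteristic variety and transport equations), followed by the estimate $F_{k,\delta}(\Pi_{k,\leq k\delta}-A_k)F_{k,\delta}=O(k^{-\infty})$, which combines {\rm (I)} with the frequency localization of $F_{k,\delta}$ and must also reconcile $\Pi_{k,\leq k\delta}$ with the full Szeg\H{o} projector $\Pi_k$ appearing in Definition~\ref{d-gue130820m}. Second, since ${\rm Im\,}\varphi\geq 0$ and vanishes only on the diagonal, the final composition requires the Melin--Sj\"ostrand complex stationary phase formula with almost-analytic extensions, not the real stationary phase you invoke; the relevant non-degeneracy comes from the positivity of $R^L_x-2t\mathcal{L}_x$ for $\abs{t}\leq\delta$ rather than from the cut-off $\abs{\langle\,\xi\,|\,\omega_0(x)\,\rangle}\leq\delta$, and this is exactly where the determinant in \eqref{e-gue150807a} originates. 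As written, the proposal is a plausible roadmap to the external reference but not a proof.
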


In view of Theorem~\ref{t-gue150811}, we see that to prove
Theorem~\ref{t-gue150807}, we only need to prove  that
{\rm (I)\,},  {\rm (II)\,} and  {\rm (III)\,} in Theorem~\ref{t-gue150811}
hold if $\delta>0$ is small enough.  By repeating the proof of Theorem 3.9 in~\cite{HLM16}, we see that {\rm (I)\,} holds. We only need to show that {\rm (II)\,} and  {\rm (III)\,} hold.

Recall that the $\Real$-action $\eta$ comes from a CR torus action \(T^d\curvearrowright X\) which we denote by $(e^{i\theta_1},\ldots,e^{i\theta_d})$ and $L$, $h^L$, $R^L$, the Hermitian metric $\langle\,\cdot\,|\,\cdot\,\rangle$ and the $L^2$ inner product $(\,\cdot\,|\,\cdot\,)_k$ are torus invariant. For every $j=1,\ldots,d$, let $T_j$ be the operator on $C^\infty(X)$ given by
\[(T_ju)(x)=\frac{\partial}{\partial\theta_j}u((1,\ldots,1,e^{i\theta_j},1,\ldots,1)\circ x)|_{\theta_j=0},\ \ \forall u\in C^\infty(X).\]
Since the $\Real$-action $\eta$ comes from $T^d$, there exist real numbers $\beta_j\in\Real$, $j=1,\ldots,d$, such that
\begin{equation}\label{e-gue170830}
T=\beta_1T_1+\cdots+\beta_dT_d.
\end{equation}
Using Remark \ref{lem:linearindependent} we can assume that \(\beta_1,\ldots,\beta_d\) are linear independent over \(\mathbb Q\).

Let $D\subset X$ be a canonical coordinate patch and let $x=(x_1,\ldots,x_{2n-1})$ be canonical coordinates on $D$. We identify $D$ with $W\times]-\varepsilon,\varepsilon[\subset\Real^{2n-1}$, where $W$ is some open set in $\Real^{2n-2}$ and $\varepsilon>0$. Until further notice, we work with canonical coordinates $x=(x_1,\ldots,x_{2n-1})$. Let $\xi=(\xi_1,\ldots,\xi_{2n-1})$ be the dual coordinates of $x$.
Let $s$ be a local rigid CR trivializing section of $L$ on $D$, $\abs{s}^2_{h^L}=e^{-2\Phi}$. Let $F_{k, \delta,s}$ be the localized operator of $F_{k,\delta}$ given by \eqref{e-gue170909}. Put
\begin{equation}\label{e-gue131209}
B_{k}=\frac{k^{2n-1}}{(2\pi)^{2n-1}}\int e^{ik\langle x-y,\xi\rangle}\tau_\delta(\xi_{2n-1})d\xi,
\end{equation}
where $\tau_\delta\in C^\infty_0((-\delta, \delta))$ is given by \eqref{e-gue160105}.

\begin{lemma}\label{l-gue131209}
We have
\[F_{k, \delta,s}-B_{k}=O(k^{-\infty}):H^s_{{\rm comp\,}}(D)\To H^s_{{\rm loc\,}}(D),\ \ \forall s\in\mathbb N_0.\]
\end{lemma}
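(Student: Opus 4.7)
The plan is to realize $F_{k,\delta}$ as a semiclassical Fourier multiplier via the spectral calculus of $-iT$, and then compare it with $B_k$ locally in canonical coordinates by exploiting the rigidity of $s$ and $\Phi$.

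By Theorem~\ref{t-gue170817yc}, $-iT$ is self-adjoint on $L^2(X,L^k)$, and since $\tau_\delta\in C_0^\infty$ its Fourier transform $\hat\tau_\delta$ is Schwartz. Spectral calculus then gives
\[
F_{k,\delta} \;=\; \tau_\delta\bigl(-iT/k\bigr) \;=\; \frac{1}{2\pi}\int_{\mathbb R}\hat\tau_\delta(r)\,\Psi_{r/k}\,dr,
\]
where $\Psi_t:L^2(X,L^k)\to L^2(X,L^k)$ denotes the unitary one-parameter group generated by $T$, which is realised as the pullback by the $\Real$-flow $\phi_t(x)=t\circ x$ lifted canonically to sections of the rigid bundle $L^k$. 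In canonical coordinates on $D$ one has $T=\partial/\partial x_{2n-1}$ and $\phi_t(x',x_{2n-1})=(x',x_{2n-1}+t)$ whenever the target remains in $D$; combined with the rigidity of $s$ and of $\Phi$ (so $\Phi=\Phi(x')$), a direct computation yields
\[
U_{k,s}^{-1}\bigl(\Psi_{r/k}\,U_{k,s}u\bigr)(x) \;=\; u(x',\,x_{2n-1}+r/k)
\]
for $u\in C_0^\infty(D_0)$ whenever $\phi_{r/k}(x)\in D$. The essential point is that the $e^{\pm k\Phi}$ factors cancel exactly because $\Phi$ is constant along $T$-orbits, so the conjugation does not introduce exponential growth in $k$.

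Next I split the integral in $r$ at $|r|\le k\varepsilon/2$, where $\varepsilon>0$ is chosen so small that $\phi_{r/k}(x)\in D$ for all $x\in D_0$ and $|r|\le k\varepsilon/2$. The tail $\int_{|r|>k\varepsilon/2}\hat\tau_\delta(r)\Psi_{r/k}(\cdot)\,dr$ contributes $O(k^{-\infty})$ in every Sobolev norm: the Schwartz decay of $\hat\tau_\delta$ gives $\int_{|r|>k\varepsilon/2}|\hat\tau_\delta(r)|\,dr=O(k^{-\infty})$, and in the trivialization $\Psi_{r/k}$ acts by translation in $x_{2n-1}$, which preserves every $H^s$-norm uniformly in $r$. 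On the truncated range the local formula of the previous paragraph applies, and extending the integral back to $\mathbb R$ costs another $O(k^{-\infty})$. Changing variable $y_{2n-1}=x_{2n-1}+r/k$, expressing $\hat\tau_\delta(k(y_{2n-1}-x_{2n-1}))$ as a Fourier integral against $\tau_\delta(\xi_{2n-1})$, and inserting the trivial identity $k^{2n-2}(2\pi)^{-(2n-2)}\int e^{ik\langle x'-y',\xi'\rangle}\,d\xi'=\delta(x'-y')$ in the transverse variables, recovers precisely the oscillatory representation of $B_k$ in \eqref{e-gue131209}.

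The only point requiring genuine input is the rigidity cancellation in the local formula for $U_{k,s}^{-1}\Psi_{r/k}U_{k,s}$: without $\Phi$ being constant along orbits one would produce factors $e^{k(\Phi(\phi_{r/k}(x))-\Phi(x))}$ which, although close to $1$ pointwise for small $r/k$, could not be controlled uniformly once the Fourier cutoff is removed and would spoil the $O(k^{-\infty})$ bound. Once this cancellation is pinned down, the rest of the argument is a routine application of Schwartz decay of $\hat\tau_\delta$ and the translation invariance of $H^s$ in canonical coordinates.
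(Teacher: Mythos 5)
Your argument is correct in substance but follows a genuinely different route from the paper's. The paper expands $F_{k,\delta,s}$ in the Fourier modes of the torus action: starting from \eqref{e-gue131217} it writes $F_{k,\delta,s}=B_k+R_k$ \emph{exactly} on $C^\infty_0(D')$ (via the Dirac identity \eqref{dm} and Fourier inversion), where the remainder $R_k$ carries a factor $1-\chi(y_{2n-1})$ supported away from the diagonal, and then kills $R_k$ by repeated integration by parts in $y_{2n-1}$ and $\xi_{2n-1}$, at the price of controlling sums of the form $\sum_{m\in\mathbb Z^d}\bigl(m_1\beta_1+\cdots+m_d\beta_d\bigr)^{-2N}$. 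You instead use the group Fourier transform in the $\Real$-direction: the identification
\[
F_{k,\delta}=\tau_\delta(-iT/k)=\frac{1}{2\pi}\int_{\Real}\hat\tau_\delta(r)\,e^{(r/k)T}\,dr
\]
is legitimate because $-iT$ is self-adjoint with pure point spectrum (Theorems~\ref{t-gue170817yc} and~\ref{t-gue170828}) and $e^{tT}$ is the unitary group induced by the flow lifted to the rigid bundle $L^k$; the rigidity of $s$ and $T\Phi=0$ do give $U_{k,s}^{-1}e^{(r/k)T}U_{k,s}u=u(x',x_{2n-1}+r/k)$ for $|r/k|$ small, and your change of variables then reproduces $B_k$ on the nose. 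What your approach buys is that all the decay comes from the one-dimensional Schwartz estimate on $\hat\tau_\delta$, with no small-denominator sums over $\mathbb Z^d$. The one place you are too quick is the tail $|r|>k\varepsilon/2$: there $\phi_{r/k}$ leaves the chart and may re-enter it at points with a different transverse coordinate $x'$, so ``acts by translation in $x_{2n-1}$'' is no longer literally true, and the conjugation by $U_{k,s}$ (whose derivatives produce factors $k\,\partial\Phi$) costs $O(k^{s})$ rather than $O(1)$ in the $H^{s}$ operator norm. Both defects are harmless --- the family $\set{\phi_t}_{t\in\Real}$ lies in the compact torus $\ol{\gamma(\Real)}$ of isometries, hence is bounded in every $C^m$-norm uniformly in $t$, and a fixed polynomial loss in $k$ is absorbed by the $O(k^{-\infty})$ coming from $\int_{|r|>k\varepsilon/2}|\hat\tau_\delta(r)|\,dr$ --- but the tail bound should be justified this way rather than by appealing to translation invariance there.
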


\begin{proof}
We also write $y=(y_1,\ldots,y_{2n-1})$ to denote the canonical coordinates $x$. It is easy to see that on $D$,
\begin{equation}\label{e-gue131217}
\begin{split}
&F_{k, \delta,s}u(y)\\
&=\sum_{(m_1,\ldots,m_d)\in\mathbb Z^d}\tau_\delta\Bigr(\frac{\sum^d_{j=1}m_j\beta_j}{k}\Bigr)e^{i(\sum^d_{j=1}m_j\beta_j) y_{2n-1}}\\
&\quad\times\int_{T^d}e^{-(im_1\theta_1+\cdots+im_d\theta_d)}
u((e^{i\theta_1},\ldots,e^{i\theta_d})\circ y')dT_d,\ \ \forall u\in C^\infty_0(D),
\end{split}
\end{equation}
where $y'=(y_1,\ldots,y_{2n-2},0)$, $dT_d=(2\pi)^{-d}d\theta_1\cdots d\theta_d$ and $\beta_1\in\Real,\ldots,\beta_d\in\Real$ are as in \eqref{e-gue170830}.
Recall that $\beta_1,\ldots, \beta_d$ are linear independent over $\mathbb Q$.
Fix $D'\Subset D$ and let $\chi(y_{2n-1})\in C^\infty_0(]-\varepsilon,\varepsilon[)$ such that $\chi(y_{2n-1})=1$ for every $(y',y_{2n-1})\in D'$. Let $R_{k}:C^\infty_0(D')\To C^\infty(D')$ be the continuous operator given by
\begin{equation}\label{e-gue131217I}
\begin{split}
&(R_ku)(x)=\\
&\frac{1}{2\pi}\sum_{(m_1,\ldots,m_d)\in\mathbb Z^d}\:
\int\limits_{T^d}e^{i\langle x_{2n-1}-y_{2n-1},\xi_{2n-1}\rangle+i(\sum^d_{j=1}m_j\beta_j)y_{2n-1}-im_1\theta_1-\cdots-im_d\theta_d}\\
&\times\tau_\delta\Bigr(\frac{\xi_{2n-1}}{k}\Big) (1-\chi(y_{2n-1}))u((e^{i\theta_1},\ldots,e^{i\theta_d})\circ x')dT_dd\xi_{2n-1}dy_{2n-1},
\end{split}
\end{equation}
where $u\in C^\infty_0(D')$. We claim that
\begin{equation}\label{e-gue170831}
R_k=O(k^{-\infty}): H^s_{{\rm comp\,}}(D')\To H^s_{{\rm loc\,}}(D'),\ \ \forall s\in\mathbb N_0.
\end{equation}
We only prove the claim \eqref{e-gue170831} for $s=0$. For any $s\in\mathbb N$, the proof is similar. Fix any $g\in C^\infty_0(D')$. By using integration by parts with respect to $y_{2n-1}$ and $\xi_{2n-1}$ several times, it is straightforward to check that for every $N\in\mathbb N$, there is a constant $C_N>0$ independent of $k$ such that
\begin{equation}\label{e-gue170831I}
\begin{split}
&\int_X\abs{R_ku}^2(x)g(x)dv_X(x)\\
&\leq C_Nk^{-2N}\Bigr(\sum_{(m_1,\ldots,m_d)\in\mathbb Z^d, (m_1,\ldots,m_d)\neq(0,\ldots,0)}\Bigr(\frac{1}{m_1\beta_1+\cdots+m_d\beta_d}\Bigr)^{2N}+1\Bigr)\\
&\times\int_X\int_{T^d}\abs{u((e^{i\theta_1},\ldots,e^{i\theta_d})\circ x')}^2g(x)dT_ddv_X(x).
\end{split}
\end{equation}
It is clear that $\int_{T^d}\abs{u((e^{i\theta_1},\ldots,e^{i\theta_d})\circ x')}^2g(x)dT_d=\int_{T^d}\abs{u((e^{i\theta_1},\ldots,e^{i\theta_d})\circ x)}^2g(x)dT_d$. From this observation, \eqref{e-gue170831I} and notice that $dv_X$ is $T^d$-invariant, we conclude that
\begin{equation}\label{e-gue170831II}
\begin{split}
&\int_X\int_{T^d}\abs{u((e^{i\theta_1},\ldots,e^{i\theta_d})\circ x')}^2g(x)dT_ddv_X(x)\\
&=\int_X\int_{T^d}\abs{u((e^{i\theta_1},\ldots,e^{i\theta_d})\circ x)}^2g(x)dT_ddv_X(x)\\
&\leq C\int_X\int_{T^d}\abs{u((e^{i\theta_1},\ldots,e^{i\theta_d})\circ x)}^2dT_ddv_X(x)\\
&\leq\int_X\abs{u(x)}^2dv_X(x),
\end{split}
\end{equation}
where $C>0$ is a constant independent of $k$ and $u$. From \eqref{e-gue170831II}, \eqref{e-gue170831I} and note that
\[\sum_{(m_1,\ldots,m_d)\in\mathbb Z^d}\Bigr(\frac{1}{m_1\beta_1+\cdots+m_d\beta_d})^{2N}<+\infty\]
if $N$ large enough, we get the claim \eqref{e-gue170831}.

Now, we claim that
\begin{equation}\label{e-gue131217III}
B_{k}+R_k=F_{k, \delta,s}\ \ \mbox{on $C^\infty_0(D')$}.
\end{equation}
Let $u\in C^\infty_0(D')$. From \eqref{e-gue131209} and Fourier inversion formula, it is straightforward to see that
\begin{equation}\label{e-gue131217IV}
\begin{split}
&B_{k}u(x)\\
&=\frac{1}{2\pi}\sum_{(m_1,\ldots,m_d)\in\mathbb Z^d}
\int e^{i\langle x_{2n-1}-y_{2n-1},\xi_{2n-1}\rangle}
\tau_\delta\Big(\frac{\xi_{2n-1}}{k}\Big)\chi(y_{2n-1})\\
&\quad\times e^{i(\sum^d_{j=1}m_j\beta_j)y_{2n-1}-im_1\theta_1-\cdots-im_d\theta_d}u((e^{i\theta_1},\ldots,e^{i\theta_d})\circ x')dT_ddy_{2n-1}d\xi_{2n-1}.
\end{split}
\end{equation}
From \eqref{e-gue131217IV} and \eqref{e-gue131217I}, we have
\begin{equation}\label{e-gue131217V}
\begin{split}
&(B_{k}+R_{k})u(x)\\
&=\frac{1}{2\pi}\sum_{(m_1,\ldots,m_d)\in\mathbb Z^d}
\int e^{i\langle x_{2n-1}-y_{2n-1},\xi_{2n-1}\rangle}
\tau_\delta\Big(\frac{\xi_{2n-1}}{k}\Big)\\
&\quad\times e^{i(\sum^d_{j=1}m_j\beta_j)y_{2n-1}-im_1\theta_1-\cdots-im_d\theta_d}u((e^{i\theta_1},\ldots,e^{i\theta_d})\circ x')dT_ddy_{2n-1}d\xi_{2n-1}.
\end{split}\end{equation}
Note that the following formula holds for every $\alpha\in\Real$,
\begin{equation}\label{dm}
\int e^{i\alpha y_{2n-1}}e^{-iy_{2n-1}\xi_{2n-1}}dy_{2n-1}=2\pi\delta_\alpha(\xi_{2n-1}),
\end{equation}
where the integral is defined as an oscillatory integral and $\delta_\alpha$ is the Dirac measure at $\alpha$.
Using \eqref{e-gue131217}, \eqref{dm} and the Fourier inversion formula,
 \eqref{e-gue131217V} becomes
\begin{equation}\label{e-gue131217VI}
\begin{split}
&(B_{k}+R_k)u(x)\\
&=\sum_{(m_1,\ldots,m_d)\in\mathbb Z^d}\tau_\delta\Big(\frac{\sum^d_{j=1}m_j\beta_j}{k}\Big)e^{i(\sum^d_{j=1}m_j\beta_j)x_{2n-1}}
\int_{T_d}e^{-im_1\theta_1-\cdots-im_d\theta_d}u((e^{i\theta_1},\ldots,e^{i\theta_d})\circ x')dT_d\\
&=F_{k, \delta,s}u(x).
\end{split}\end{equation}
From \eqref{e-gue131217VI}, the claim \eqref{e-gue131217III} follows.
From \eqref{e-gue131217III} and \eqref{e-gue170831}, the lemma follows.
\end{proof}

From Lemma~\ref{l-gue131209}, we see that the condition {\rm (III)\,} in Theorem~\ref{t-gue150811} holds.

\begin{lemma}\label{l-gue150813}
Let $D\subset X$ be a canonical coordinate patch of $X$.
Then, $\Pi_{k,\leq\delta k}$ is $k$-negligible away the diagonal with respect to $F_{k,\delta}$ on $D$.
\end{lemma}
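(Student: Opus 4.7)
The plan is to reduce the negligibility to the explicit oscillatory-integral form of $F_{k,\delta}$ given by Lemma~\ref{l-gue131209} and then to apply non-stationary phase in the $\xi_{2n-1}$-direction. Fix cut-offs $\chi,\chi_1\in C_0^\infty(D)$ with $\chi_1\equiv 1$ on a neighborhood of $\operatorname{supp}\chi$; the key geometric input is that there exists $c>0$ with $|x-y|\geq c$ for every $x\in\operatorname{supp}\chi$ and $y\in\operatorname{supp}(1-\chi_1)$. I would trivialize $L$ via a rigid CR section $s$ on $D$ and pass to the scalar operator $F_{k,\delta,s}$ on $L^2(D)$. By Lemma~\ref{l-gue131209}, $F_{k,\delta,s}=B_k+O(k^{-\infty})\colon H^s_{\operatorname{comp}}(D)\to H^s_{\operatorname{loc}}(D)$ for every $s\in\mathbb N_0$, so the error contributes only $O(k^{-\infty})$ after insertion of the cut-offs and only the main term $\chi B_k(1-\chi_1)$ requires analysis.

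Next I would compute the Schwartz kernel of $B_k$ explicitly. Integrating out $\xi'=(\xi_1,\ldots,\xi_{2n-2})$ in the defining formula~\eqref{e-gue131209} produces a factor $(2\pi)^{2n-2}k^{-(2n-2)}\delta(x'-y')$, and the remaining $\xi_{2n-1}$-integral equals $\widehat{\tau_\delta}(-k(x_{2n-1}-y_{2n-1}))$, giving
\[
B_k(x,y)=\frac{k}{2\pi}\,\delta(x'-y')\,\widehat{\tau_\delta}\bigl(-k(x_{2n-1}-y_{2n-1})\bigr).
\]
On the support of $\chi(x)(1-\chi_1(y))$ restricted to $\{x'=y'\}$ one has $|x_{2n-1}-y_{2n-1}|\geq c>0$, and since $\widehat{\tau_\delta}$ is Schwartz, $\widehat{\tau_\delta}(-k(x_{2n-1}-y_{2n-1}))$ is $O(k^{-N})$ together with all its derivatives for every $N$. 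This yields $O(k^{-\infty})$ Sobolev-operator bounds on $\chi F_{k,\delta}(1-\chi_1)$.

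To assemble the triple composition I would write its kernel as $\sum_{j=1}^{d_k}(\chi F_{k,\delta}(1-\chi_1)f_j)(x)\otimes\overline{(\chi F_{k,\delta}(1-\chi_1)f_j)(y)}$ for any orthonormal basis $\{f_j\}$ of $\mathcal H^0_{b,\leq k\delta}(X,L^k)$, and combine the above smoothing with polynomial-in-$k$ Sobolev control of the $f_j$. The latter follows from $\triangle_k f_j=-\alpha_j^2 f_j$ with $|\alpha_j|\leq k\delta$ via iterated elliptic regularity for $\triangle_k=\Box_{b,k}-T^2$ (cf.\ Lemma~\ref{lem:Hdimfinite}), together with the polynomial bound on $d_k$; summing yields a smooth kernel on $D\times D$ with every $C^s$-norm $O(k^{-\infty})$. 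The main obstacle is this final bookkeeping, namely converting the Sobolev-in/Sobolev-out decay into $C^\infty$-smallness of the Schwartz kernel of the composition, which rests on the polynomial Sobolev control of the range of $\Pi_{k,\leq k\delta}$ coming from elliptic regularity of $\triangle_k$ and requires careful tracking of the $k$-dependence of the coefficients of $\Box_{b,k}$ in a local rigid trivialization.
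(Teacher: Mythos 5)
Your reduction of $\chi F_{k,\delta}(1-\chi_1)$ to $\chi B_k(1-\chi_1)$ is the step that fails, and it is precisely where the difficulty of the lemma lies. Lemma~\ref{l-gue131209} asserts $F_{k,\delta,s}-B_k=O(k^{-\infty})\colon H^s_{\rm comp}(D)\to H^s_{\rm loc}(D)$ only for inputs \emph{compactly supported in $D$}, whereas $(1-\chi_1)u$ is supported on essentially all of $X$ (it equals $u$ off $\operatorname{supp}\chi_1$). The operator $F_{k,\delta}$ is built from the spectral projections $Q^{(0)}_{\alpha,k}$ of $-iT$, i.e.\ from averages over the full torus orbits; since the $\beta_j$ in \eqref{e-gue170828} are rationally independent, the $\Real$-orbit of a point $x\in\operatorname{supp}\chi$ is dense in its torus-orbit closure and repeatedly re-enters $\operatorname{supp}(1-\chi_1)$. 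Hence the Schwartz kernel of $F_{k,\delta}$ is not concentrated near the diagonal of $D\times D$ in the Euclidean sense, and the geometric separation $|x-y|\geq c$ on $\operatorname{supp}\chi\times\operatorname{supp}(1-\chi_1)$, together with the off-diagonal decay of $B_k(x,y)=\frac{k}{2\pi}\delta(x'-y')\widehat{\tau_\delta}(-k(x_{2n-1}-y_{2n-1}))$, does not control the contributions coming from far along the orbit. Your argument never addresses these "orbit returns", which is the actual content of the lemma.

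The paper's proof works instead with the explicit torus-average formula \eqref{e-gue150813I} for $\chi F_{k,\delta}(1-\chi_1)u$ and splits it as $I_0+I_1-I_2$: the piece $I_1$ (frequencies in the window, $x_{2n-1}$ near $y_{2n-1}$) vanishes identically by the support condition \eqref{e-gue150813II}, since there $v=(1-\chi_1)u=0$; the pieces $I_0$ and $I_2$ are handled by repeated integration by parts in $\xi_{2n-1}$ and $y_{2n-1}$, \emph{combined with} the uniform sup-norm bound $\abs{u(x)}^2_{h^k}\leq Ck^n$ for normalized $u\in\mathcal{H}^0_{b,\leq k\delta}(X,L^k)$ from \eqref{e-gue150813}, which is what controls the torus integral over the (global, mostly outside $D$) orbit of $x'$. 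Note that this is where the sandwiched projector $\Pi_{k,\leq k\delta}$ in Definition~\ref{d-gue131205m} enters essentially: the estimate is proved only for inputs in $\mathcal{H}^0_{b,\leq k\delta}(X,L^k)$, not for arbitrary $L^2$ sections. Your idea of obtaining polynomial-in-$k$ control of the basis elements from elliptic regularity of $\triangle_k=\Box_{b,k}-T^2$ (note $\triangle_kf_j=+\alpha_j^2f_j$, not $-\alpha_j^2f_j$) is a reasonable substitute for the quoted sup-norm estimate, but it does not repair the main gap: you still need an argument that sees the whole orbit, not just the local model $B_k$ on $D$.
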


\begin{proof}
Let $\chi, \chi_1\in C^\infty_0(D)$, $\chi_1=1$ on some neighbourhood of
${\rm Supp\,}\chi$. Let $u\in\mathcal{H}^0_{b,\leq k\delta}(X,L^k)$ with
$\norm{u}_k=1$. We can repeat the proof of Theorem 2.4 in~\cite{HL15} and deduce that there is a constant $C>0$ independent of $k$ and $u$ such that
\begin{equation}\label{e-gue150813}
\abs{u(x)}^2_{h^k}\leq Ck^n,\ \ \forall x\in X.
\end{equation}
Let $x=(x_1,\ldots,x_{2n-1})=(x',x_{2n-1})$ be canonical coordinates on $D$ and let $s$ be a rigid CR trivializing section of $L$ on $D$, $\abs{s}^2_{h^L}=e^{-2\Phi}$. Put $v=(1-\chi_1)u$. It is straightforward to see that on $D$,
\begin{equation}\label{e-gue150813I}
\begin{split}
&(2\pi)\chi F_{k,\delta}(1-\chi_1)u\\
&=\sum_{\substack{(m_1,\ldots,m_d)\in\mathbb Z^d,\\ \abs{m_1\beta_1+\cdots+m_d\beta_d}\leq 2k\delta}}\int
e^{i\langle x_{2n-1}-y_{2n-1},\xi_{2n-1}\rangle}
\chi(x)\tau_\delta\Big(\frac{\xi_{2n-1}}{k}\Big)\\
&\quad\times e^{i(\sum^d_{j=1}m_j\beta_j)y_{2n-1}-im_1\theta_1-\cdots-im_d\theta_d}
v((e^{i\theta_1},\ldots,e^{i\theta_d})\circ x')dT_d\,d\xi_{2n-1}\,dy_{2n-1}.
\end{split}
\end{equation}
Let $\varepsilon>0$ be a small constant so that for every $(x_1,\ldots,x_{2n-1})\in{\rm Supp\,}\chi$, we have
\begin{equation}\label{e-gue150813II}
(x_1,\ldots,x_{2n-2},y_{2n-1})\in\set{x\in D;\, \chi_1(x)=1},\ \ \forall\abs{y_{2n-1}-x_{2n-1}}<\varepsilon.
\end{equation}
Let $\psi\in C^\infty_0((-1,1))$, $\psi=1$ on $\big[-\frac{1}{2},\frac{1}{2}\big]$. Put
\begin{equation}\label{e-gue150813III}
\begin{split}
I_0(x)=\frac{1}{(2\pi)}\sum_{\substack{(m_1,\ldots,m_d)\in\mathbb Z^d,\\ \abs{m_1\beta_1+\cdots+m_d\beta_d}\leq 2k\delta}}&\int e^{i\langle x_{2n-1}-y_{2n-1},\xi_{2n-1}\rangle}\Big(1-\psi\Big(\frac{x_{2n-1}-y_{2n-1}}{\varepsilon}\Big)\Big)\chi(x)\\
&\times \tau_\delta\Big(\frac{\xi_{2n-1}}{k}\Big)e^{i(\sum^d_{j=1}m_j\beta_j)y_{2n-1}-im_1\theta_1-\cdots-im_d\theta_d}\\
&\times v((e^{i\theta_1},\ldots,e^{i\theta_d})\circ x')dT_dd\xi_{2n-1}dy_{2n-1},
\end{split}
\end{equation}
\begin{equation}\label{e-gue150813IV}
\begin{split}
I_1(x)=\frac{1}{(2\pi)}\sum_{\substack{(m_1,\ldots,m_d)\in\mathbb Z^d}}&\int e^{i\langle x_{2n-1}-y_{2n-1},\xi_{2n-1}\rangle}\psi\Big(\frac{x_{2n-1}-y_{2n-1}}{\varepsilon}\Big)\chi(x)
\tau_\delta\Big(\frac{\xi_{2n-1}}{k}\Big)\\
&\times e^{i(\sum^d_{j=1}m_j\beta_j)y_{2n-1}-im_1\theta_1-\cdots-im_d\theta_d}\\
&\times v((e^{i\theta_1},\ldots,e^{i\theta_d})\circ x')dT_dd\xi_{2n-1}dy_{2n-1},
\end{split}
\end{equation}
and
\begin{equation}\label{e-gue150813V}
\begin{split}
I_2(x)=\frac{1}{(2\pi)}\sum_{\substack{(m_1,\ldots,m_d)\in\mathbb Z^d,\\ \abs{m_1\beta_1+\cdots+m_d\beta_d}>2k\delta}}&\int e^{i\langle x_{2n-1}-y_{2n-1},\xi_{2n-1}\rangle}\psi\Big(\frac{x_{2n-1}-y_{2n-1}}{\varepsilon}\Big)\chi(x)
\tau_\delta\Big(\frac{\xi_{2n-1}}{k}\Big)\\
&\times e^{i(\sum^d_{j=1}m_j\beta_j)y_{2n-1}-im_1\theta_1-\cdots-im_d\theta_d}\\
&\times v((e^{i\theta_1},\ldots,e^{i\theta_d})\circ x')dT_dd\xi_{2n-1}dy_{2n-1}.
\end{split}
\end{equation}
It is clear that on $D$,
\begin{equation}\label{e-gue150813VI}
\chi F_{k,\delta}(1-\chi_1)u(x)=I_0(x)+I_1(x)-I_2(x).
\end{equation}

On $D$, write $I_j(x)=s^k(x)\otimes\Td I_j(x)$, $\Td I_j(x)\in C^\infty(D)$, $j=0,1,2$.
By using integration by parts with respect to $\xi_{2n-1}$  and $y_{2n-1}$ several times and \eqref{e-gue150813}, we conclude that for every $N\gg1$ and $\ell\in\mathbb N$, there is a constant $C_{N,\ell}>0$ independent of $u$ and $k$ such that
\begin{equation}\label{e-gue150813VII}
\begin{split}
\norm{e^{-k\Phi(x)}\Td I_0(x)}_{C^\ell(D)}&\leq C_{N,\ell}k^{-2N}\sum_{\substack{(m_1,\ldots,m_d)\in\mathbb Z^d}}\Bigr(\frac{1}{m_1\beta_1+\cdots+m_d\beta_d}\Bigr)^{2N}\\
&\leq\Td C_{N,\ell}k^{-2N},
\end{split}
\end{equation}
where $\Td C_{N,\ell}>0$ is a constant independent of $u$ and $k$. Similarly, by using integration by parts with respect to $y_{2n-1}$ several times and \eqref{e-gue150813}, we conclude that for every $N>0$ and $\ell\in\mathbb N$, there is a constant $\hat C_{N,\ell}>0$ independent of $u$ and $k$ such that
\begin{equation}\label{e-gue150813VIII}
\norm{e^{-k\Phi(x)}\Td I_2(x)}_{C^\ell(D)}\leq\hat C_{N,\ell}k^{-N}.
\end{equation}
We can check that
\begin{equation}\label{e-gue150813aVIII}
\begin{split}
&I_1(x)\\
&=\frac{1}{2\pi}\int e^{i\langle x_{2n-1}-y_{2n-1},\xi_{2n-1}\rangle}\psi\Big(\frac{x_{2n-1}-y_{2n-1}}{\varepsilon}\Big)
\chi(x)\tau_\delta\Big(\frac{\xi_{2n-1}}{k}\Big)v(x',y_{2n-1})d\xi_{2n-1}dy_{2n-1}.
\end{split}
\end{equation}
From \eqref{e-gue150813II} and \eqref{e-gue150813aVIII}, we deduce that
\begin{equation}\label{e-gue150813bVIII}
\mbox{$\Td I_1(x)=0$ on $D$.}
\end{equation}
On $D$, write $\chi F_{k,\delta}(1-\chi_1)u=s^k\otimes h$, $h\in C^\infty(D)$.
From \eqref{e-gue150813VI}, \eqref{e-gue150813VII}, \eqref{e-gue150813VIII} and \eqref{e-gue150813bVIII}, we conclude that for every $N>0$ and $\ell\in\mathbb N$, there is a constant $C_{N,\ell}>0$ independent of $u$ and $k$ such that
\begin{equation}\label{e-gue150813gVIII}
\norm{e^{-k\Phi(x)}h(x)}_{C^\ell(D)}\leq\hat C_{N,\ell}k^{-N}.
\end{equation}

Let $\set{f_1,\ldots,f_{d_k}}$ be an orthonormal basis for $\mathcal{H}^0_{b,\leq k\delta}(X,L^k)$. On $D$, write
\[\chi F_{k,\delta}(1-\chi_1)f_j=s^k\otimes h_j,\ \ h_j\in C^\infty(D),\ \ j=1,2,\ldots,d_k.\]
From \eqref{e-gue150813} and \eqref{e-gue150813gVIII}, it is not difficult to see that
\begin{equation}\label{e-gue150813vgVIII}
\mbox{$\sum\limits^{d_k}_{j=1}\abs{(\pr^\alpha_xh_j)(x)e^{-k\Phi(x)}}^2=O(k^{-\infty})$ on $D$},\ \ \forall\alpha\in\mathbb N^{2n-1}_0.
\end{equation}
From \eqref{e-gue150813vgVIII}, the lemma follows.
\end{proof}

From Lemma~\ref{l-gue131209} and Lemma~\ref{l-gue150813},
we see that the conditions {\rm (I)\,}, {\rm (II)\,} and {\rm (III)\,}
in Theorem~\ref{t-gue150811} holds. The proof of Theorem~\ref{t-gue150807} is completed.

From Theorem~\ref{t-gue150807}, we can repeat the proof of Theorem 1.3 in~\cite{HLM16} (see Section 4 in~\cite{HLM16}) and get Theorem~\ref{t-gue150807I}. We omit the details.

 \bigskip

{\emph{\ \textbf{Acknowledgements.} The second author would like to
thank Professor Homare Tadano for useful discussion in this
work.  }}

\bigskip

\end{document}